\newcommand{\marginlabel}[1]{\mbox{}\marginpar{\raggedleft\hspace{0pt}\tiny{\textcolor{red}{#1}}}}
\renewcommand{\marginlabel}[1]{} 
\newcommand{\R}{\mathbb{R}}
\newcommand{\N}{\mathbb{N}}
\newcommand{\Z}{\mathbb{Z}}
\newcommand{\abs}[1]{\left|#1\right|}
\newcommand{\abseps}[1]{{\abs{#1}_\eps}}
\newcommand{\eps}{\varepsilon}
\newcommand{\seq}[1]{\left\{#1\right\}}
\newcommand{\norm}[1]{\left\Vert #1\right\Vert}
\newcommand{\sgn}[1]{\mathrm{sign}\left(#1\right)}
\newcommand{\Dt}{{\Delta t}}
\newcommand{\test}{\varphi}
\newcommand{\Dx}{{\Delta x}}
\newcommand{\Dm}{D_-}
\newcommand{\Dp}{D_+}
\newcommand{\Dpt}{D_+^t}
\newcommand{\Dmt}{D_-^t}
\newcommand{\loc}{{\mathrm{loc}}}
\newcommand{\car}[1]{\mathds{1}_{\seq{#1}}}
\newcommand{\downto}{\downarrow}
\newcommand{\taum}{\tau^-}
\newcommand{\taup}{\tau^+}
\newcommand{\thetap}{\theta^+}
\newcommand{\thetam}{\theta^-}
\newcommand{\zetae}{\zeta^\varepsilon}
\newcommand{\Ee}{E^\varepsilon}
\newcommand{\sg}[2]{\mathrm{sg}_{#1}\left(#2\right)}
\newcommand{\signe}{\textnormal{sign}_\varepsilon}
\newcommand{\sign}{\textnormal{sign}}
\newcommand{\interval}{\mathrm{int}}
\newcommand{\psie}{\psi_\varepsilon}
\newcommand{\intPi}{\iint_{\Pi_T^2}}
\newcommand{\qe}{q_\varepsilon}
\newcommand{\dxdt}{\,dxdt}
\newcommand{\dyds}{\,dyds}
\newcommand{\dX}{\,dX}
\newcommand{\dz}{\,dz}
\theoremstyle{plain} \newtheorem{lemma}{Lemma}[section]
\theoremstyle{definition} \newtheorem{example}{Example}[section]
\theoremstyle{definition} \newtheorem{definition}{Definition}[section]
\theoremstyle{definition} \newtheorem{estimate}{Estimate}[section]
\theoremstyle{plain} \newtheorem{theorem}{Theorem}[section]
\theoremstyle{definition} \newtheorem{remark}{Remark}[section]
\numberwithin{equation}{section}
\title[Error estimates for convection-diffusion equations]
{$L^1$ error estimates for difference approximations 
of degenerate convection-diffusion equations}
\author[K. H. Karlsen]{K. H. Karlsen} \address[Kenneth H. Karlsen]
{\newline Center of Mathematics for Applications (CMA) 
\newline University of Oslo
\newline P.O. Box 1053,  Blindern
\newline N--0316 Oslo, Norway} 
\email[]{kennethk@math.uio.no}
\author[N. H. Risebro]{N. H. Risebro} \address[Nils Henrik Risebro]
{\newline Center of Mathematics for Applications (CMA)
\newline University of Oslo
\newline P.O. Box 1053, Blindern
\newline N--0316 Oslo, Norway}
\email[]{nilshr@math.uio.no}
\author[E. B. Storr\o{}sten]{E. B. Storr\o{}sten} \address[Erlend Briseid Storr\o{}sten]
{\newline Center of Mathematics for Applications (CMA)
\newline University of Oslo
\newline P.O. Box 1053, Blindern
\newline N--0316 Oslo, Norway} 
\email[]{erlenbs@math.uio.no}
\date{\today}
\subjclass[2010]{Primary: 65M06, 65M15; Secondary: 35K65, 35L65}
\keywords{Degenerate convection-diffusion equations, 
entropy conditions, finite difference schemes, error estimates}
\begin{document}

\begin{abstract}
  We analyze monotone finite difference schemes for strongly 
  degenerate convection-diffusion equations in one spatial dimension. These
  nonlinear equations are well-posed within a class of (discontinuous)
  entropy solutions.  We prove that the $L^1$ error between the
  approximate and exact solutions is $\mathcal{O}(\Dx^{1/3})$, where 
  $\Dx$ is the spatial grid parameter. This result should be compared with the classical
  $\mathcal{O}(\Dx^{1/2})$ error estimate for conservation laws
  \cite{Kuznetsov:1976ys}, and a recent estimate of
  $\mathcal{O}(\Dx^{1/11})$ for degenerate convection-diffusion
  equations \cite{Karlsen:2010uq}.
\end{abstract}

\maketitle

\tableofcontents

\allowdisplaybreaks

\section{Introduction}
Nonlinear convection-dominated flow problems arise in a range of
applications, such as fluid dynamics, meteorology, transport of oil
and gas in porous media, electro-magnetism, as well as in many other
applications.  As a consequence it has become a very important
undertaking to construct robust, accurate, and efficient methods for
the numerical approximation of such problems.  Over the years a large
number of stable (convergent) numerical methods have been developed
for linear and nonlinear convection-diffusion equations in which the
``diffusion part" is small, or even vanishing, relative to the
``convection part" of the equation. There is a large literature on
this topic, and we will provide a few relevant references later.

One central but exceedingly difficult issue relating to numerical
methods for convection-diffusion equations, is the derivation of (a
priori) error estimates that are robust in the singular limit as the
diffusion coefficient vanishes, avoiding the exponential growth of
error constants.  This problem has been resolved only partly in
special situations, such as for linear equations or in the completely
degenerate case of no diffusion (scalar conservation laws). For
general nonlinear equations containing both convection and
(degenerate) diffusion terms this is a long standing open problem in
numerical analysis.

This paper makes a small contribution to this general problem by
deriving an error estimate for a class of simple difference schemes
for nonlinear and strongly degenerate convection-diffusion problems of
the form
\begin{equation}\label{MainProblem}
  \begin{cases}
    \partial_tu + \partial_xf(u) = \partial_x^2A(u), & (x,t) \in \Pi_T,\\
    u(0,x) = u^0(x), & x \in \R,
  \end{cases}
\end{equation}
where $\Pi_T = \R \times (0,T)$ for some fixed final time $T>0$, and
$u(x,t)$ is the scalar unknown function that is sought. The initial
function $u_0:\R\to\R$ is a given integrable and bounded function,
while the convection flux $f:\R\to\R$ and the diffusion function
$A:\R\to\R$ are given functions satisfying
\begin{equation*}%\label{eq:flux-ass}
  \text{$f,A$ locally $C^1$; $A(0)=0$; $A$ nondecreasing.}
\end{equation*}

The moniker \emph{strongly degenerate} means that we allow $A'(u) = 0$
for all $u$ in some interval $[\alpha,\beta]\subset \R$. Thus, the
class of equations becomes very general, including purely hyperbolic
equations (scalar conservation laws)
\begin{equation}\label{eq:CL}
  \partial_tu + \partial_xf(u) =0,
\end{equation}
as well as nondegenerate (uniformly parabolic) equations, such as the
heat equation $\partial_tu = \partial_x^2 u$, and point-degenerate
diffusion equations, such as the heat equation with a power-law
nonlinearity: $\partial_tu = \partial_x ( u^m \partial_x u)$, which is
degenerate at $u=0$.

Whenever the problem \eqref{MainProblem} is uniformly parabolic (i.e.,
$A' \ge \eta$ for some $\eta > 0$), it is well known that the problem
admits a unique classical (smooth) solution. On the other hand, in the
strongly degenerate case, \eqref{MainProblem} must be interpreted in
the weak sense to account for possibly discontinuous (shock wave)
solutions.  Regarding weak solutions, it turns out that one needs an
additional condition, the so-called \textit{entropy condition}, to
ensure that \eqref{MainProblem} is well-posed.  More precisely, the
following is known: For $u_0\in L^1(\R)\cap L^\infty(\R)$, there
exists a unique solution $u\in C((0,T);L^1(\R^d))$, $u\in
L^\infty(\Pi_T)$ of \eqref{MainProblem} such that $\partial_x A(u)\in
L^2(\Pi_T)$ and for all convex functions $S:\R\to\R$ with $q_S'=f' S'$
and $r_S'=A'S'$,
\begin{equation}\label{eq:entropyineq}
  \partial_t S(u) + \partial_x q_S(u) - \partial_x^2 r_S(u)\le 0 \quad
  \text{in the weak sense on $[0,T)\times \R$}.
\end{equation}
The satisfaction of these inequalities for all convex $S$ is the
entropy condition, and a weak solution satisfying the entropy
condition is called an entropy solution.  The well-posedness of
entropy solutions is a famous result due to Kru{\v{z}}kov
\cite{Kruzkov:1970kx} for conservation laws \eqref{eq:CL}, and a more
recent work by Carrillo \cite{Carrillo:1999hq} extends this to
degenerate parabolic equations \eqref{MainProblem}.  These results are
available in the multi-dimensional context, and we refer to
\cite{Andreianov:2012kx,Dafermos:2010fk} for an overview of the
relevant literature. For uniqueness of entropy solutions in the $BV$
class, see \cite{Volpert:1967vn,Wu:1989ve}.

One traditional way of constructing entropy solutions is by the
vanishing viscosity method, which starts off from classical solutions
to the nondegenerate equation
\begin{equation*}%\label{eq:VV}
  \partial_t u_\eta + \partial_xf(u_\eta) 
  = \partial_x^2A(u_\eta) + \eta \partial_x^2 u_\eta, 
  \qquad \eta>0,
\end{equation*}
and establishes the strong convergence of $u_\eta$ as $\eta\to 0$ by
deriving $BV$ estimates that are independent of $\eta$, see
Vol{$'$}pert and Hudjaev \cite{Volpert:1969fk}.

Besides proving that $u_\eta$ converges in the $L^1$ norm to the
unique entropy solution $u$ of \eqref{MainProblem}, it is possible to
prove the error estimate
\begin{equation}\label{eq:VV-error}
  \norm{u_\eta(\cdot,t)-u(\cdot,t)}_{L^1}\le 
  C \, \sqrt{\eta}, \qquad \text{(whenever $u_0\in BV$)},
\end{equation}
see \cite{Evje:2002aq} (cf.~also \cite{Eymard:2002eu}).  The error
bound \eqref{eq:VV-error} can also be obtained as a consequence of the
more general continuous dependence estimate derived in
\cite{Cockburn:1999fk}, see also \cite{Chen:2005wf,Karlsen:2003za}.

Herein we are interested in the much more difficult problem of
deriving error estimates for numerical approximations of entropy
solutions to convection-diffusion equations. Convergence results
(without error estimates) have been obtained for finite difference
schemes \cite{Evje:2000vn} (see also
\cite{Evje:2000ix,Karlsen:2001ul}); finite volume schemes
\cite{Eymard:2002nx} (see also \cite{Andreianov:2009kx}); operator
splitting methods \cite{Holden:2010fk}; and BGK approximations
\cite{Aregba-Driollet:2003, Bouchut:2000dp}, to mention just a few references. For a
posteriori estimates for finite volume schemes, see
\cite{Ohlberger:2001oq}.

To be concrete in what follows, let us for simplicity assume $f'\ge 0$
and consider the semi-discrete difference scheme
\begin{equation}\label{eq:FDM}
  \frac{d}{dt}u_j(t) + \frac{f(u_j)-f(u_{j-1})}{\Delta x} 
  = \frac{A(u_{j+1})-2A(u_j)+A(u_{j-1})}{\Delta x^2},
\end{equation}
where $u_j(t)\approx u(t,j\Delta x)$ and $\Delta x>0$ is the spatial
mesh size.  Convergence of this scheme can be proved as in the works
\cite{Evje:2000vn,Evje:2000ix}, where explicit and implicit time
discretizations are treated.  Denote by $u_{\Delta x}(x,t)$ the
piecewise constant interpolation of $\left\{u_j(t)\right\}_{j}$.  The
basic question we address in this paper is the following one: Does
there exist a number $r\in (0,1)$ and a constant $C$, independent of
$\Delta x$, such that
\begin{equation}\label{eq:FDM-error}
  \norm{u_{\Delta x}(\cdot,t)-u(\cdot,t)}_{L^1}\le 
  C \, \Delta x^r,
\end{equation}
where $u$ is the unique entropy solution of \eqref{MainProblem}.  We
refer to the number $r$ as the rate of convergence.

In the purely hyperbolic case \eqref{eq:CL} ($A'\equiv 0$), the answer
to this question is a classical result due to Kuznetsov
\cite{Kuznetsov:1976ys}, who proved that the rate of convergence is
$1/2$ for viscous approximations as well as monotone difference
schemes, and this is optimal for discontinuous solutions.  The work of
Kuznetsov \cite{Kuznetsov:1976ys} turned out to be extremely
influential, and by now a large number of related works have been
devoted to error estimation theory for conservation laws.  We refer to
\cite{Cockburn:2003ys} for an overview of the relevant results and
literature.

Unfortunately, the situation is unclear in the degenerate parabolic
case \eqref{MainProblem}. Let us expose some reasons why adding a
nonlinear diffusion term to \eqref{eq:CL} can make the error analysis
significantly more difficult than in the streamlined Kuznetsov theory.
First of all, it is well known that the purely hyperbolic difference
scheme
\begin{equation}\label{eq:upwind}
  \frac{d}{dt}u_j(t) + \frac{f(u_j)-f(u_{j-1})}{\Delta x}=0
\end{equation}
has as a model equation the second order viscous equation
$$
\partial_t u+ \partial_xf(u) = \frac{\Delta x}{2}\partial_x^2 f(u),
$$
an equation that is compatible with the notion of entropy solution for
\eqref{eq:CL}. Indeed, an error estimate for this viscous equation is
highly suggestive for what to expect for the upwind scheme
\eqref{eq:upwind} (this is of course what Kuznetsov proved).  However,
for convection-diffusion equations such as \eqref{MainProblem} the
situation changes. The model equation for \eqref{eq:FDM}
is no longer second order but rather fourth order:
\begin{equation*}%\label{eq:VV4}
  \partial_t u+ \partial_xf(u) 
  = \partial_x^2 A(u) + 
  \frac{\Delta x}{2}\partial_x^2 f(u) 
  - \frac{\Delta x^2}{12} \partial_x^4 A(u);
\end{equation*}
hence the error estimate \eqref{eq:VV-error} appears no longer so
relevant for numerical schemes. Another added difficulty comes 
from the necessity to work with an explicit form of the 
parabolic dissipation term associated with
\eqref{MainProblem}. Indeed, in the analysis one needs to replace
\eqref{eq:entropyineq} by the following more precise entropy equation
\cite{Carrillo:1999hq}
\begin{equation}\label{eq:entropyeq}
  \begin{split}
    & \partial_t \abs{u-c} + \partial_x
    \bigl(\sign(u-c)(f(u)-f(c)\bigr) - \partial_x^2 \abs{A(u)-A(c)} \\
    & \qquad = -\sign'(A(u)-A(c)) \abs{\partial_x A(u)}^2, \qquad c\in
    \R,
  \end{split}
\end{equation}
which is formally obtained multiplying \eqref{MainProblem} by
$\sgn{A(u)-A(c)}$, assuming for the sake of this discussion that
$A'(\cdot)>0$.  The term on the right-hand side is the parabolic
dissipation term, which is a finite (signed) measure and thus very
singular. To illustrate why the parabolic dissipation term is needed,
let $u(y,s)$ and $v(x,t)$ be two solutions satisfying
\eqref{eq:entropyeq}. In the entropy equation for $u(y,s)$ one takes
$c=v(x,t)$, while in the entropy equation for $v(x,t)$ one takes
$c=u(y,s)$. Adding the two resulting equations yields
\begin{align*}
  &(\partial_t +\partial_s) \abs{u-v} + (\partial_x+\partial_y)
  \bigl(\sign(u-v)(f(u)-f(v)\bigr) \\ & \quad -
  (\partial_x^2+\partial_y^2) \abs{A(u)-A(v)} =
  -\sign'(A(u)-A(v))\bigl( \abs{\partial_y A(u)}^2+\abs{\partial_x
    A(v)}^2\bigr),
\end{align*}
By adding $-2\partial_{xy}^2 \abs{A(u)-A(v)}$ to both sides of this
equation, noting that
$$
-2\partial_{xy}^2 \abs{A(u)-A(v)}= 2 \sign'(A(u)-A(v)) \partial_y
A(u) \partial_x A(v),
$$
we arrive at
\begin{equation}\label{eq:uniq-tmp}
  \begin{split}
    &(\partial_t +\partial_s) \abs{u-v} + (\partial_x+\partial_y)
    \bigl(\sign(u-v)(f(u)-f(v)\bigr) \\ & \qquad \qquad \qquad\qquad -
    (\partial_x^2-2 \partial_{xy}^2+\partial_y^2) \abs{A(u)-A(v)} \\ &
    \qquad\quad = -\sign'(A(u)-A(v))\bigl( \abs{\partial_y A(u)}
    -\abs{\partial_x A(v)}\bigr)^2 \\ &\qquad \quad \le 0,
  \end{split}
\end{equation}
from which the contraction property
$\frac{d}{dt}\norm{u(\cdot,t)-v(\cdot,t)}_{L^1}\le 0$ follows
\cite{Carrillo:1999hq}.  Similarly, to obtain error estimates for
numerical methods, it is necessary to derive a ``discrete" version of
\eqref{eq:uniq-tmp} with $v$ replaced by $u_{\Delta x}$. The main
challenge is to suitably replicate at the discrete level the delicate
balance between the two terms in \eqref{eq:uniq-tmp} involving $A$;
the difficulty stems from the lack of a chain rule for finite
differences.

Despite the mentioned difficulties, we will in this paper prove that
there exists a constant $C$, independent of $\Delta x$, such that for
any $t>0$,
$$
\norm{u_{\Delta x}(\cdot,t)-u(\cdot,t)}_{L^1}\le C \, \Delta
x^{\frac{1}{3}}.
$$
The only other work we are aware of that provides $L^1$ error
estimates for numerical approximations of \eqref{MainProblem} is
\cite{Karlsen:2010uq}; therein \eqref{eq:FDM-error} is established
with $r=\frac{1}{11}$; if $A$ is a linear function, then the
convergence rate is the usual one, namely $r=\frac12$.  In addition to
the semi-discrete scheme \eqref{eq:upwind}, we will prove similar
results for fully discrete (implicit and explicit) difference schemes.

Roughly speaking, the reason is two-fold for why we can significantly
improve the result in \cite{Karlsen:2010uq}.  First, we are herein
able to provide a more faithful analog of \eqref{eq:uniq-tmp} at the
discrete level.  Second, since $\sign'(\cdot)$ is singular, one has to
work with a Lipschitz continuous approximation $\signe(\cdot)$ of the
sign function $\sign(\cdot)$. The use of this approximation breaks the
symmetry of the corresponding entropy fluxes, and introduces new error
terms that depend on the parameter $\eps$; the process of ``balancing"
terms involving $\Delta x$ and $\eps$ lowers the convergence rate (to
$r=\frac{1}{11}$) \cite{Karlsen:2010uq}.  In the present paper we are
able to dispense with this balancing act. Indeed, we show that it is
possible to send $\eps\to 0$ independently of $\Delta x$.

The remaining part of this paper is organized as follows: In Section
\ref{sec:prelim} we list some relevant a priori estimates satisfied by
viscous approximations and entropy solutions, and provide a definition
of entropy solutions.  The semi-discrete difference scheme is defined
and proved to be well-posed in Section \ref{sec: the finite difference
  scheme}.  We also list several relevant a priori estimates.  Section
\ref{sec: the error estimate} is devoted to the proof of the error
estimate.  In Section~\ref{sec: error estimate implicit} we show that
the proof in Section~\ref{sec: the error estimate} can be adapted to a
fully discrete scheme that is implicit in the time variable.  In fact,
we go through all the steps of the proof and provide the details where
there are considerable differences between the two cases. In
Section~\ref{sec: error estimate explicit} the explicit version of the
scheme is treated. We end the paper with a few 
concluding remarks in Section \ref{sec:final}.

\section{Preliminary material}\label{sec:prelim}

Set $A^\eta(u) := A(u) + \eta u$ for any fixed $\eta > 0$, and
consider the uniformly parabolic problem
\begin{equation}\label{NondegEqu}
  \begin{cases}
    u^\eta_t + f(u^\eta)_x = A^\eta(u^\eta)_{xx}, & (x,t) \in \Pi_T,\\
    u^\eta(x,0) = u^0(x), & x \in \R.
  \end{cases}
\end{equation}
It is well known that \eqref{NondegEqu} admits a unique classical
(smooth) solution.

We collect some relevant (standard) a priori estimates in the next
three lemmas.

\begin{lemma}\label{lem:1.1}
  Suppose $u^0\in L^\infty(\R)\cap L^1(\R) \cap BV(\R)$, and let
  $u^\eta$ be the unique classical solution of \eqref{NondegEqu}.
  Then for any $t>0$,
  \begin{align*}
    \norm{u^\eta(\cdot,t)}_{L^1(\R)} & \le \norm{u^0}_{L^1(\R)},\\
    \norm{u^\eta(\cdot,t)}_{L^\infty(\R)} &\le \|u^0\|_{L^\infty(\R)}, \\
    \abs{u^\eta(\cdot,t)}_{BV(\R)} &\le \abs{u^0}_{BV(\R)}.
  \end{align*}
\end{lemma}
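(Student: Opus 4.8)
The key structural fact I would record first is that $A^\eta$ is \emph{strictly} increasing, since $(A^\eta)'(u)=A'(u)+\eta\ge\eta>0$; hence \eqref{NondegEqu} is uniformly parabolic and, besides admitting the asserted smooth solution, enjoys a comparison principle. For the $L^\infty$ bound I would exploit that every constant $c$ is a stationary solution of \eqref{NondegEqu} (indeed $f(c)_x=0$ and $A^\eta(c)_{xx}=0$). Writing $M=\norm{u^0}_{L^\infty(\R)}$, the constant solutions $\pm M$ bound $u^0$ from above and below at $t=0$, so the comparison principle yields $-M\le u^\eta(\cdot,t)\le M$, which is the second estimate; this in particular confines $u^\eta$ to a fixed compact set on which $f'$ and $(A^\eta)'$ are bounded.

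The heart of the matter is an $L^1$ contraction principle: if $u$ and $v$ are two classical solutions of \eqref{NondegEqu}, then $t\mapsto\norm{u(\cdot,t)-v(\cdot,t)}_{L^1(\R)}$ is nonincreasing. To prove this I set $w=u-v$ and write $f(u)-f(v)=bw$ and $A^\eta(u)-A^\eta(v)=aw$, where $b=\int_0^1 f'(v+\theta w)\,d\theta$ is bounded and $a=\int_0^1 (A^\eta)'(v+\theta w)\,d\theta\ge\eta>0$; both are smooth since $u,v$ are. Subtracting the two equations shows that $w$ solves the linear uniformly parabolic equation $w_t+(bw)_x=(aw)_{xx}$. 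I would then multiply by $\signe(w)$, a smooth nondecreasing approximation of $\sgn{\cdot}$ with $\signe'\ge0$ concentrating near $0$, and integrate over $\R$. The diffusive contribution splits into $-\int \signe'(w)\,a\,(w_x)^2\,dx\le 0$ (favorable sign, using $a\ge\eta>0$) plus a remainder $\int \Xi_\eps(w)\,a_{xx}\,dx$, where $\Xi_\eps(w)=\int_0^w \signe'(s)\,s\,ds$ satisfies $\abs{\Xi_\eps(w)}\le 2\eps\to 0$ uniformly; the convection contribution is likewise $\int \Xi_\eps(w)\,b_x\,dx\to0$. Sending $\eps\to0$ gives $\frac{d}{dt}\norm{w(\cdot,t)}_{L^1(\R)}\le 0$. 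Choosing $v\equiv 0$, which is a solution because $A^\eta(0)=0$, then yields the first estimate $\norm{u^\eta(\cdot,t)}_{L^1(\R)}\le\norm{u^0}_{L^1(\R)}$.

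The $BV$ estimate follows cleanly from the contraction principle via the translation invariance of \eqref{NondegEqu}: for any $h\in\R$ the shift $u^\eta(\cdot+h,\cdot)$ is again a classical solution, with initial datum $u^0(\cdot+h)$. Applying the $L^1$ contraction to the pair $u^\eta(\cdot+h,\cdot)$ and $u^\eta(\cdot,\cdot)$ gives
\[
  \norm{u^\eta(\cdot+h,t)-u^\eta(\cdot,t)}_{L^1(\R)}\le
  \norm{u^0(\cdot+h)-u^0}_{L^1(\R)}\le \abs{h}\,\abs{u^0}_{BV(\R)}.
\]
Dividing by $\abs{h}$ and taking the supremum over $h\neq 0$, using the characterization $\abs{g}_{BV(\R)}=\sup_{h\ne0}\abs{h}^{-1}\norm{g(\cdot+h)-g}_{L^1(\R)}$, yields the third estimate. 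I expect the $L^1$ contraction to be the main obstacle: one must carefully pass to the limit $\eps\to0$ in the convection term while preserving the diffusion term's sign, and it is precisely the uniform parabolicity ($\eta>0$, whence $a\ge\eta$) that makes the remainder estimates rigorous — the same argument degenerates in the limit $\eta\to0$, which is exactly the difficulty the rest of the paper must confront.
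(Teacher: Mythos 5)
Your argument is correct and is essentially the classical one of Vol{$'$}pert and Hudjaev: the paper itself gives no proof of Lemma~\ref{lem:1.1} and simply refers the reader to \cite{Volpert:1969fk}, and the three steps you use --- comparison with the constant solutions $\pm M$ for the $L^\infty$ bound, an $L^1$ contraction obtained by testing the equation for $w=u-v$ against a smooth monotone approximation of the sign function, and the translation-invariance trick converting $L^1$ contraction into the $BV$ bound --- are exactly the standard route. Two technical points deserve a remark. First, your remainder terms involve $b_x$ and $a_{xx}$, i.e.\ derivatives of $b=\int_0^1 f'(v+\theta w)\,d\theta$ and $a=\int_0^1 (A^\eta)'(v+\theta w)\,d\theta$, which require $f$ and $A$ to be $C^2$, whereas the paper assumes only locally $C^1$ nonlinearities; this is harmless (mollify $f,A$ and pass to the limit, since the constants in all three estimates are independent of the mollification, and some extra smoothness is in any case implicitly needed to speak of ``the unique classical solution'' of \eqref{NondegEqu}), but it should be acknowledged. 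A variant that avoids the second integration by parts is to stop at the remainders $-\int \signe'(w)\,w\,b\,w_x\,dx$ and $-\int \signe'(w)\,w\,a_x\,w_x\,dx$ and note that $\abs{\signe'(s)s}$ is bounded uniformly in $\eps$ and supported in $\seq{\abs{s}<\eps}$, so dominated convergence sends both to zero. Second, the integrations by parts over all of $\R$ (and the claim that $\int_\R \signe(w)w_t\,dx$ is the derivative of an approximate $L^1$ norm) require decay of $u^\eta$ and its derivatives at infinity; this holds for the data considered here but is part of what makes the argument rigorous. Neither point is a genuine gap.
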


For a proof of the previous and next lemmas, see for example
\cite{Volpert:1969fk}.

\begin{lemma}\label{LipschitzContTime}
  Suppose $u^0\in L^\infty(\R)\cap L^1(\R)$ and $f(u^0)-A(u^0)_x\in
  BV(\R)$. Let $u^\eta$ be the unique classical solution of
  \eqref{NondegEqu}.  Then for any $t_1,t_2>0$,
  \begin{equation*}
    \norm{u^\eta(\cdot,t_2)-u^\eta(\cdot,t_1)}_{L^1(\R)} 
    \le \abs{f(u^0)-A(u^0)_x}_{BV(\R)} \abs{t_2-t_1}.
  \end{equation*}
\end{lemma}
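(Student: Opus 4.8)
The plan is to combine the time-translation invariance of \eqref{NondegEqu} with the $L^1$-contraction of the parabolic flow. Because the equation is autonomous, for fixed $h>0$ the function $v(x,t):=u^\eta(x,t+h)$ is again the classical solution of \eqref{NondegEqu}, now with initial datum $v(\cdot,0)=u^\eta(\cdot,h)$. Two classical solutions of the uniformly parabolic problem obey $\frac{d}{dt}\norm{u^\eta(\cdot,t)-v(\cdot,t)}_{L^1(\R)}\le 0$; this follows by subtracting the equations, multiplying by $\signe(u^\eta-v)$, integrating over $\R$, and letting $\eps\downto 0$, the diffusion term contributing a nonpositive sign exactly as in the computation leading to \eqref{eq:uniq-tmp} (here it is even simpler, since $(A^\eta)'\ge\eta>0$ and the solutions are smooth). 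Evaluating this contraction at $t=t_1$, and assuming without loss of generality $t_2=t_1+h$ with $h>0$, gives
\begin{equation*}
  \norm{u^\eta(\cdot,t_1+h)-u^\eta(\cdot,t_1)}_{L^1(\R)}
  \le\norm{u^\eta(\cdot,h)-u^0}_{L^1(\R)},
\end{equation*}
so the whole statement reduces to the short-time bound $\norm{u^\eta(\cdot,h)-u^0}_{L^1(\R)}\le\abs{f(u^0)-A(u^0)_x}_{BV(\R)}\,h$.

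For the short-time bound I would write $u^\eta(\cdot,h)-u^0=\int_0^h\partial_s u^\eta(\cdot,s)\,ds$, whence $\norm{u^\eta(\cdot,h)-u^0}_{L^1(\R)}\le\int_0^h\norm{\partial_s u^\eta(\cdot,s)}_{L^1(\R)}\,ds$. The \emph{crucial point} is that $s\mapsto\norm{\partial_s u^\eta(\cdot,s)}_{L^1(\R)}$ is nonincreasing on $(0,\infty)$. This again follows from the contraction: applied to the pair $u^\eta(\cdot,\cdot+\tau)$ and $u^\eta$ it yields $\norm{u^\eta(\cdot,t+\tau)-u^\eta(\cdot,t)}_{L^1(\R)}\le\norm{u^\eta(\cdot,s+\tau)-u^\eta(\cdot,s)}_{L^1(\R)}$ for $t\ge s$, and dividing by $\tau$ and letting $\tau\downto 0$ gives $\norm{\partial_t u^\eta(\cdot,t)}_{L^1(\R)}\le\norm{\partial_s u^\eta(\cdot,s)}_{L^1(\R)}$ for $t\ge s>0$. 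Consequently $\norm{\partial_s u^\eta(\cdot,s)}_{L^1(\R)}\le\lim_{\sigma\downto 0}\norm{\partial_\sigma u^\eta(\cdot,\sigma)}_{L^1(\R)}$ for every $s>0$, and it remains to identify this limit.

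Rewriting \eqref{NondegEqu} in conservation form, $\partial_s u^\eta=\partial_x\bigl(A^\eta(u^\eta)_x-f(u^\eta)\bigr)$, so for $s>0$ one has $\norm{\partial_s u^\eta(\cdot,s)}_{L^1(\R)}=\abs{A^\eta(u^\eta(\cdot,s))_x-f(u^\eta(\cdot,s))}_{BV(\R)}$, and letting $s\downto 0$ formally produces the initial flux $\abs{A^\eta(u^0)_x-f(u^0)}_{BV(\R)}$. Up to the purely viscous contribution $\eta\,u^0_x$ this is the constant $\abs{f(u^0)-A(u^0)_x}_{BV(\R)}$ of the statement; inserting it into the two previous displays then gives the asserted Lipschitz bound.

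The main obstacle is precisely this passage $s\downto 0$ together with the identification of the constant. Under the stated hypotheses $u^0$ need only be of bounded-variation type, so $\partial_s u^\eta(\cdot,s)$ is not guaranteed to converge in $L^1(\R)$ and, more delicately, the viscous term $\eta\,u^0_x$ must be controlled so as not to enlarge the constant beyond $\abs{f(u^0)-A(u^0)_x}_{BV(\R)}$; this is exactly where the regularity assumption $f(u^0)-A(u^0)_x\in BV(\R)$ and the structure of \eqref{NondegEqu} are genuinely used. I would handle it by first establishing the estimate for smooth initial data, for which $\partial_s u^\eta$ is classical up to $s=0$ and the computation above is rigorous, and then passing to the limit along a mollified sequence $u^0_\delta\to u^0$ using the uniform bounds of Lemma~\ref{lem:1.1}, the $L^1$-continuous dependence on the initial datum, and lower semicontinuity of the $BV$ seminorm, following Vol'pert and Hudjaev \cite{Volpert:1969fk}. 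The remaining manipulations---justifying the $\signe$-multiplier argument, the limit $\eps\downto 0$, and the vanishing of boundary terms as $x\to\pm\infty$---are routine given the $L^1\cap L^\infty\cap BV$ estimates already in hand.
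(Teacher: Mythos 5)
The paper does not prove this lemma itself; it refers to Vol'pert and Hudjaev \cite{Volpert:1969fk}, so there is no internal proof to compare against. Your architecture --- time-translation invariance plus the $L^1$-contraction of the viscous flow, reduction to a short-time bound, monotonicity of $s\mapsto\norm{\partial_s u^\eta(\cdot,s)}_{L^1(\R)}$, and identification of its limit as $s\downto 0$ with the $BV$ seminorm of the initial flux --- is exactly the classical argument from that reference, and the steps carried out for $s>0$ are all sound.

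The one step that would fail as written is the passage to rough initial data. You propose to prove the estimate for mollified data $u^0_\delta$ and then invoke ``lower semicontinuity of the $BV$ seminorm''; but lower semicontinuity gives $\abs{g}_{BV}\le\liminf_\delta\abs{g_\delta}_{BV}$ when $g_\delta\to g$ in $L^1_\loc$, which is the wrong direction here --- you need $\limsup_\delta\abs{f(u^0_\delta)-A^\eta(u^0_\delta)_x}_{BV}\le\abs{f(u^0)-A^\eta(u^0)_x}_{BV}$, i.e.\ that the approximation does not \emph{increase} the constant. Since $f$ and $A$ are nonlinear, mollifying $u^0$ does not commute with forming the flux, so this upper bound is not automatic; it is in fact the whole difficulty of the lemma. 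The standard repair (and the route taken in \cite{Volpert:1969fk}) is to differentiate the equation in time: $w:=\partial_t u^\eta$ solves the linear divergence-form equation $w_t+(f'(u^\eta)w)_x=((A^\eta)'(u^\eta)w)_{xx}$, whose $L^1$ norm is nonincreasing, with initial datum the finite measure $\partial_x\bigl(A^\eta(u^0)_x-f(u^0)\bigr)$ of total mass $\abs{f(u^0)-A^\eta(u^0)_x}_{BV(\R)}$; this yields $\norm{\partial_t u^\eta(\cdot,t)}_{L^1(\R)}\le\abs{f(u^0)-A^\eta(u^0)_x}_{BV(\R)}$ directly, without ever taking $s\downto 0$ in your chain. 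Finally, the constant produced this way involves $A^\eta(u)=A(u)+\eta u$ rather than $A$; you flag this correctly, and under the stated hypotheses the discrepancy $\eta u^0_x$ need not even have bounded variation, but the paper elides the same point (compare the remark following Lemma~\ref{FluxDiffLemma}), so the statement should be read with $A^\eta$ in place of $A$, or under the extra regularity on $u^0$ assumed in Section~\ref{sec: the error estimate}.
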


Regarding the following lemma, see \cite{Tassa:1996fk,Evje:2000vn}.

\begin{lemma}\label{FluxDiffLemma}
  Suppose $u^0\in L^\infty(\R)\cap L^1(\R)$ and $f(u^0)-A(u^0)_x\in
  L^\infty(\R)\cap BV(\R)$. Let $u^\eta$ be the unique classical
  solution of \eqref{NondegEqu}.  Then for any $t>0$,
  \begin{align*}
    \norm{f(u^\eta(\cdot,t))-A(u^\eta(\cdot,t))_x}_{L^\infty(\R)} &
    \le
    \norm{f(u^0)-A(u^0)_x}_{L^\infty(\R)}, %\label{ContFluxDiffusionMax}
    \\
    \abs{f(u^\eta(\cdot,t))-A(u^\eta(\cdot,t))_x}_{BV(\R)} &\le
    \abs{f(u^0)-A(u^0)_x}_{BV(\R)}. %\label{ContFluxDiffusionBV}
  \end{align*}
\end{lemma}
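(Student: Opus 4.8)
The key object is the flux of the regularized equation \eqref{NondegEqu},
\[
  q^\eta := f(u^\eta) - A^\eta(u^\eta)_x ,
\]
for which \eqref{NondegEqu} takes the conservation form $\partial_t u^\eta + \partial_x q^\eta = 0$; in particular $\partial_x q^\eta = -\partial_t u^\eta$. (This $q^\eta$ differs from the flux $f(u^\eta)-A(u^\eta)_x$ in the statement only by the term $\eta u^\eta_x$, the quantity that one keeps bounded uniformly in $\eta$ and lets vanish as $\eta\downarrow 0$.) The plan is to show that $q^\eta$ itself solves a linear, uniformly parabolic equation, and then to read off the $L^\infty$ bound from the maximum principle and the $BV$ bound from the $L^1$-contraction property of the corresponding equation for $\partial_x q^\eta$.

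First I would differentiate. Since $u^\eta$ is a bounded smooth solution of \eqref{NondegEqu}, the coefficients $f'(u^\eta)$ and $(A^\eta)'(u^\eta)\ge\eta>0$ are smooth and bounded; inserting $\partial_t u^\eta=-\partial_x q^\eta$ and applying the chain rule one finds
\[
  \partial_t q^\eta + f'(u^\eta)\,\partial_x q^\eta = \partial_x\!\big((A^\eta)'(u^\eta)\,\partial_x q^\eta\big),
\]
a linear, uniformly parabolic equation with no zeroth-order term. The maximum principle for bounded classical solutions on $\R$ (the decay needed at spatial infinity coming from $u^\eta\in L^1(\R)\cap L^\infty(\R)$ with vanishing spatial derivatives) then gives $\norm{q^\eta(\cdot,t)}_{L^\infty(\R)}\le\norm{q^\eta(\cdot,0)}_{L^\infty(\R)}$, which is the first inequality.

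For the $BV$ bound I would differentiate once more in $x$. Setting $p:=\partial_x q^\eta=-\partial_t u^\eta$, one obtains from the $q^\eta$-equation the divergence (Fokker--Planck) form
\[
  \partial_t p + \partial_x\!\big(f'(u^\eta)\,p\big) = \partial_x^2\!\big((A^\eta)'(u^\eta)\,p\big).
\]
Such linear parabolic equations are $L^1$-contractive: multiplying by a Lipschitz approximation $\signe(p)$ of $\sign(p)$, integrating over $\R$, integrating by parts, and using $(\signe)'(\cdot)\ge 0$ together with the positivity of $(A^\eta)'(u^\eta)$, the boundary terms at infinity vanish and the surviving terms are nonpositive in the limit $\eps\to 0$, so that $\frac{d}{dt}\int_\R\abs{p}\,dx\le 0$. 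Since $\abs{q^\eta(\cdot,t)}_{BV(\R)}=\norm{p(\cdot,t)}_{L^1(\R)}$, this yields $\abs{q^\eta(\cdot,t)}_{BV(\R)}\le\abs{q^\eta(\cdot,0)}_{BV(\R)}$, the second inequality.

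The step I expect to be the main obstacle is the rigorous justification of these maximum/contraction principles on the unbounded domain $\R$: one must control $q^\eta$ and $p$ as $\abs{x}\to\infty$ so that no boundary contributions survive the integrations by parts, and absorb the non-smoothness of $\sign$ through the approximation $\signe$ (in particular verifying that the cross term carrying $(\signe)'(p)\,p$ vanishes as $\eps\to 0$). This is precisely where the hypotheses $u^0\in L^\infty(\R)\cap L^1(\R)$ and $f(u^0)-A(u^0)_x\in L^\infty(\R)\cap BV(\R)$ enter: they make $q^\eta(\cdot,0)$ bounded and of bounded variation and guarantee finiteness of the relevant integrals, after which the smoothing of the uniformly parabolic operator propagates the needed decay for $t>0$. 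Finally, since the estimates are uniform in $\eta$, the stated bounds (with $A$ rather than $A^\eta$) are recovered on letting $\eta\downarrow 0$.
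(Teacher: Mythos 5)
Your argument is correct and is essentially the standard proof: the paper itself does not prove this lemma but defers to \cite{Tassa:1996fk,Evje:2000vn}, and your derivation of the linear uniformly parabolic equation for $q^\eta=f(u^\eta)-A^\eta(u^\eta)_x$ (maximum principle for the $L^\infty$ bound, $L^1$-contraction of the divergence-form equation for $\partial_x q^\eta=-\partial_t u^\eta$ for the $BV$ bound) is exactly the continuous counterpart of the paper's own proof of the discrete analogue, Lemma~\ref{SemiDiscFluxDiffBounds}, where $v_j=\Dp A(u_j)-F(u_j,u_{j+1})$ plays the role of your $q^\eta$. You also correctly flag the only delicate points (behaviour at spatial infinity, the $\eta u^\eta_x$ discrepancy between $A$ and $A^\eta$, and the vanishing of the $\signe'$ cross terms), so nothing essential is missing.
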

Note that $\norm{A(u^\eta)_x}_{L^\infty_t(L^\infty_x)}$ and
$\norm{A(u^\eta)_{xx}}_{L^\infty_t(L^1_x)}$ are bounded independently
of $\eta$ provided that $A(u^0)_x$ is in $BV(\R)$.

The results above imply that $\seq{u^\eta}_{\eta > 0}$ is 
relatively compact in $C([0,T];L^1_{\loc}(\R))$.  
If $u=\lim_{\eta\to 0} u^\eta$, then
\begin{equation*}
  \norm{u^\eta-u}_{L^1(\Pi_T)} \le C\eta^{1/2},
\end{equation*}
for some constant $C$ which does not depend on $\eta$, see
\cite{Evje:2002aq}. Moreover, $u$ is an entropy solution according to
the following definition.

\begin{definition}\label{def:entropydef}
  An entropy solution of the Cauchy problem \eqref{MainProblem} is a
  measurable function $u = u(x,t)$ satisfying:
  \begin{itemize}
  \item[(D.1)] $u \in L^\infty(\Pi_T) \cap C((0,T);L^1(\R))$.
  \item[(D.2)] $A(u) \in L^2((0,T);H^1(\R))$.
  \item[(D.3)] For all constants $c \in \R$ and test functions $0\le
    \test \in C_0^\infty(\R \times [0,T))$, the following entropy
    inequality holds:
    \begin{multline*}
      \iint_{\Pi_T} \abs{u-c}\test_t + \sgn{u-c}(f(u)-f(c))\test_x
      +\abs{A(u)-A(c)}\test_{xx}\dxdt \\
      +\int_{\R}\abs{u_0-c}\test(x,0)\, dx \ge 0.
    \end{multline*}
  \end{itemize}
\end{definition}
The uniqueness of entropy solutions follows from the work
\cite{Carrillo:1999hq}.  Actually, in view of the above a priori
estimates, the relevant functional class is $BV(\Pi_T)$, in which case
we can replace (D.2) by the condition $A(u)_x \in
L^\infty(\Pi_T)$. For a uniqueness result in the $BV$ class, see
\cite{Wu:1989ve}.

\section{Difference scheme}\label{sec: the finite difference scheme}
We start by specifying the numerical flux to be used in the difference
scheme.

\begin{definition}{(Numerical flux)}
  We call a function $F \in C^1(\R^2)$ a two-point numerical flux for
  $f$ if $F(u,u) = f(u)$ for $u \in \R$. If
  \begin{equation*}
    \frac{\partial}{\partial u} F(u,v) \ge 0 
    \quad \text{ and } \quad
    \frac{\partial}{\partial v} F(u,v) \le 0 
  \end{equation*}
  holds for all $u,v \in \R$, we call $F$ monotone.
\end{definition}

Let $F_u$ and $F_v$ denote the partial derivatives of $F$ with respect
to the first and second variable, respectively.  We will also assume
that $F$ is Lipschitz continuous.

Let $\Dx > 0$ and set $x_j = j\Dx$ for $j\in \Z$, and define
\begin{equation*}
  D_\pm \sigma_j = \pm \frac{\sigma_{j \pm 1}-\sigma_j}{\Dx},
\end{equation*}
for any sequence $\seq{\sigma_j}$.

We may now define a semi-discrete approximation of the solution to
\eqref{MainProblem} as the solution to the (infinite) system of
ordinary differential equations
\begin{equation}\label{SemiDiscEqu}
  \begin{cases}
    \frac{d}{dt}u_j(t) + \Dm F_{j+1/2} = \Dm\Dp A(u_j),\ &t>0,\\
    u_j(0) = \frac{1}{\Dx}\int_{I_j}u^0(x) \, dx,
  \end{cases}\qquad j \in \Z,
\end{equation}
where $F_{j+1/2} = F(u_j,u_{j+1})$ is a numerical flux function and
$I_j =(x_{j-1/2},x_{j+1/2}]$.

The problem above can be viewed as an ordinary differential equation
in the Banach space $\ell^1(\Z)$ (see, e.g.,~\cite{Ladas:1972fk}).  To
get bounds independent of $\Dx$ we define
\begin{equation*}
  \norm{\sigma}_1 = \Dx \sum_j\abs{\sigma_j} \quad \text{ and } 
  \quad\abs{\sigma}_{BV} = \sum_j\abs{\sigma_{j+1}-\sigma_j} 
  = \norm{\Dp\sigma}_1.
\end{equation*}
If these are bounded we say that $\sigma = \{\sigma_j\}$ is in
$\ell^1$ and of bounded variation. Let $u(t)=\seq{u_j(t)}_{j\in\Z}$,
$u^0 = \seq{u_j(0)}_{j\in\Z}$, and define the operator
$\mathcal{A}:\ell^1 \rightarrow \ell^1$ by $ (\mathcal{A}(u))_j :=
\Dm(F(u_j,u_{j+1})-\Dp A(u_j))$.  Then \eqref{SemiDiscEqu} takes the
following form
\begin{equation*}%\label{SemiDiscEquAbstract}
  \frac{du}{dt} + \mathcal{A}(u) = 0,\ \ t>0,\ \
  u(0) = u^0.
\end{equation*}
This problem has a unique continuously differentiable solution since
$\mathcal{A}$ is Lipschitz continuous for each fixed $\Dx>0$. This
solution defines a strongly continuous semigroup $\mathcal{S}(t)$ on
$\ell^1$.  If $\mathcal{S}$ also satisfies
\begin{equation*}
  \norm{\mathcal{S}(t)u -\mathcal{S}(t)v}_1 \le \norm{u-v}_1 \quad
  \text{ for } \quad u,v \in \ell^1,
\end{equation*}
we say that it is nonexpansive. The next lemma sums up some
important properties of the solutions to \eqref{SemiDiscEqu} (for a
proof see \cite{Evje:1999et}).

\begin{lemma}\label{SemiDiscProp}
  Suppose that $F$ is monotone. Then there exists a unique solution
  $u= \seq{u_j}$ to \eqref{SemiDiscEqu} on $[0,T]$ with the following 
  properties:
  \begin{itemize}
  \item[\textnormal{(a)}] $\norm{u(t)}_1 \le \norm{u^0}_1$.
  \item[\textnormal{(b)}] For every $j \in \mathbb{Z}$ and $t \in
    [0,T]$,
    \begin{equation*}
      \inf_k\seq{u^0_k} \le u_j(t) \le \sup_k\seq{u^0_k}.
    \end{equation*}
  \item[\textnormal{(c)}] $\abs{u(t)}_{BV} \le \abs{u^0}_{BV}$.
  \item[\textnormal{(d)}] If $v = \{v_j\}$ is a another solution with
    initial data $v^0$ then
    \begin{equation*}
      \norm{u(t) -v(t)}_1 \le \norm{u^0-v^0}_1.
    \end{equation*}
  \end{itemize}
\end{lemma}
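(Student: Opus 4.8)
The plan is to treat \eqref{SemiDiscEqu} as the spatially homogeneous autonomous system $\frac{d}{dt}u_j = H(u_{j-1},u_j,u_{j+1})$ with $H = -\Dm F_{j+1/2} + \Dm\Dp A(u_j)$, and to extract from the hypotheses the single structural fact that $H$ is nondecreasing in its two outer arguments: a direct computation gives $\partial H/\partial u_{j-1} = \frac{1}{\Dx}F_u(u_{j-1},u_j) + \frac{1}{\Dx^2}A'(u_{j-1}) \ge 0$ and $\partial H/\partial u_{j+1} = -\frac{1}{\Dx}F_v(u_j,u_{j+1}) + \frac{1}{\Dx^2}A'(u_{j+1}) \ge 0$, using $F_u \ge 0$, $F_v \le 0$ and $A'\ge 0$. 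Global existence and uniqueness on $[0,T]$ are already furnished by the Lipschitz continuity of $\mathcal{A}$ on $\ell^1$, so the whole content of the lemma lies in the four a priori bounds; the plan is to prove the contraction (d) directly and then to deduce (a) and (c) from it, while (b) comes from the same off-diagonal monotonicity.

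The heart of the matter is (d). Given two solutions $u,v$, set $w_j = u_j - v_j$ and $s_j = \sgn{w_j}$ (with $s_j := 0$ when $w_j = 0$). Differentiating $\norm{u(t)-v(t)}_1 = \Dx\sum_j \abs{w_j}$ and inserting the scheme gives $\frac{1}{\Dx}\frac{d}{dt}\norm{u-v}_1 = \sum_j s_j(\dot u_j - \dot v_j)$, which I would split into a diffusive and a convective sum. Writing $a_j = A(u_j)-A(v_j)$, monotonicity of $A$ gives $s_j a_j = \abs{a_j}$, and a discrete summation by parts recasts the diffusive sum as $\sum_j (s_j - s_{j+1})(a_{j+1}-a_j)$, each summand being nonpositive since $s_j a_{j+1} \le \abs{a_{j+1}}$ and $s_{j+1}a_j \le \abs{a_j}$. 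For the convective sum I would freeze one variable at a time to write $F(u_j,u_{j+1}) - F(v_j,v_{j+1}) = \alpha_j w_j + \gamma_j w_{j+1}$ with $\alpha_j \ge 0$ and $\gamma_j \le 0$ (the signs coming from $F_u \ge 0$, $F_v \le 0$); a further summation by parts together with $s_{j+1}w_j \le \abs{w_j}$ and $s_j w_{j+1} \le \abs{w_{j+1}}$ renders this sum nonpositive termwise as well. Hence $\frac{d}{dt}\norm{u-v}_1 \le 0$, which is (d).

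The remaining bounds follow quickly. The zero sequence and every constant sequence are stationary solutions of \eqref{SemiDiscEqu}, since both increments $\Dm F_{j+1/2}$ and $\Dm\Dp A(u_j)$ vanish on constants; thus (a) is (d) with $v \equiv 0$, and (b) follows from the comparison principle for cooperative systems: the off-diagonal monotonicity of $H$ preserves the pointwise ordering of solutions, so comparing $u$ with the constant solutions $\sup_k u^0_k$ and $\inf_k u^0_k$ yields the two-sided bound. For (c) I would use translation invariance: if $\seq{u_j(t)}$ solves \eqref{SemiDiscEqu} then so does the shift $\seq{u_{j+1}(t)}$, with initial data $\seq{u^0_{j+1}}$; applying (d) to $u$ and its shift gives $\abs{u(t)}_{BV} = \frac{1}{\Dx}\norm{\seq{u_{j+1}(t)} - \seq{u_j(t)}}_1 \le \frac{1}{\Dx}\norm{\seq{u^0_{j+1}} - \seq{u^0_j}}_1 = \abs{u^0}_{BV}$.

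The one genuinely delicate point is the differentiation underlying (d): $\abs{w_j}$ is not differentiable at its zeros, and the sum over $j\in\Z$ is infinite. I would resolve both by passing to the upper Dini derivative (equivalently, by regularizing $\abs{\cdot}$ by a smooth convex approximation and letting the parameter tend to zero) and by invoking the $\ell^1$ bound to justify termwise differentiation. The convention $s_j = 0$ at zeros is consistent with every inequality used above, since these rely only on $\abs{s_j}\le 1$ and $s_j w_j = \abs{w_j}$; thus no term is lost in the limit, and the algebraic core — the pair of summation-by-parts estimates forced by the monotonicity of $F$ and $A$ — carries the whole argument.
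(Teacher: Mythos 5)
Your argument is correct, and it is worth noting that the paper itself does not prove Lemma~\ref{SemiDiscProp}: it refers the reader to \cite{Evje:1999et}, where these properties are obtained through the theory of monotone (implicit) schemes, essentially via the Crandall--Tartar lemma (an order-preserving, conservative map on $\ell^1$ is automatically an $\ell^1$-contraction) applied to a time discretization. Your route is the direct one: a Kru\v{z}kov-type computation of $\frac{d}{dt}\norm{u-v}_1$ using summation by parts and the sign structure forced by $F_u\ge 0$, $F_v\le 0$, $A'\ge 0$, followed by the classical reductions of (a) to the zero solution, (c) to the spatial shift, and (b) to comparison with constants. The algebra checks out: the decomposition $F(u_j,u_{j+1})-F(v_j,v_{j+1})=\alpha_j w_j+\gamma_j w_{j+1}$ with $\alpha_j\ge 0$, $\gamma_j\le 0$, the identity $s_j a_j=\abs{a_j}$ from monotonicity of $A$, and the two summation-by-parts estimates are exactly what make the scheme $\ell^1$-dissipative, and since all sequences involved lie in $\ell^1$ the reindexing produces no boundary terms. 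Two points deserve slightly more care than your sketch gives them. First, for the differentiation of $\sum_j\abs{w_j}$, the clean statement is that $t\mapsto\abs{w_j(t)}$ is locally Lipschitz and satisfies $\frac{d}{dt}\abs{w_j}=s_j\dot w_j$ for a.e.\ $t$ with your convention $s_j=0$ at zeros (wherever $\abs{w_j}$ is differentiable at a zero of $w_j$ its derivative vanishes, being at a minimum); integration in $t$ and Tonelli, using that $\norm{\dot w}_1$ is bounded on $[0,T]$, then give (d). Second, the comparison principle invoked for (b) is not a consequence of (d): the constant solutions are not in $\ell^1$, so you must either extend well-posedness to $c+\ell^1$ and prove order preservation for the infinite cooperative system from the Kamke condition you verified, or, more economically, rerun your sign computation with $(\cdot)^+$ in place of $\abs{\cdot}$ to show that $\sum_j\bigl(u_j(t)-\sup_k u_k^0\bigr)^+$ starts at zero and cannot increase. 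Either repair is short, and the overall argument stands.
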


\begin{lemma}\label{SemiDiscFluxDiffBounds}
  If $F$ is monotone, then
  \begin{align}
    \norm{F(u_j,u_{j+1})-\Dp A(u_j)}_{\ell^\infty} &\le
    \norm{F(u_j^0,u_{j+1}^0)-\Dp A(u_j^0)}_{\ell^\infty},
    \label{FluxDiffBoundInf}\\
    \abs{F(u_j,u_{j+1})-\Dp A(u_j)}_{BV} & \le
    \abs{F(u_j^0,u_{j+1}^0)-\Dp A(u_j^0)}_{BV}.
    \label{FluxDiffBoundBV}
  \end{align}
  Furthermore, $t\mapsto \seq{u_j(t)}_{j\in\Z}$ is $\ell^1$ Lipschitz
  continuous.
\end{lemma}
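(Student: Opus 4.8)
The plan is to introduce the discrete flux difference
$$
w_j := F(u_j,u_{j+1}) - \Dp A(u_j),
$$
rewrite the scheme \eqref{SemiDiscEqu} as $\frac{d}{dt}u_j = -\Dm w_j$, and derive all three assertions from a single observation: the sequence $\seq{w_j}$ itself evolves according to a linear difference scheme that inherits the monotone (order preserving) and conservative structure of \eqref{SemiDiscEqu}. I note first the elementary identity $\norm{\frac{d}{dt}u(t)}_1 = \norm{\Dm w(t)}_1 = \abs{w(t)}_{BV}$, so that the $\ell^1$ Lipschitz continuity in time, with constant $\abs{w^0}_{BV}$, follows immediately once \eqref{FluxDiffBoundBV} is in hand, by integrating $\norm{u(t_2)-u(t_1)}_1 \le \int_{t_1}^{t_2}\norm{\frac{d}{dt}u}_1\,dt$. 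Since $u$ is continuously differentiable in time (Lemma \ref{SemiDiscProp}) and $F,A\in C^1$, each $w_j$ is continuously differentiable in time, and I may differentiate.

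Differentiating $w_j$ and substituting $\frac{d}{dt}u_j = -\Dm w_j$ gives, after collecting terms, the linear system
$$
\frac{d}{dt}w_j = \beta_j\,(w_{j-1}-w_j) + \gamma_j\,(w_{j+1}-w_j),
$$
with nonnegative coefficients
$$
\beta_j = \frac{F_u(u_j,u_{j+1})}{\Dx} + \frac{A'(u_j)}{\Dx^2}, \qquad
\gamma_j = -\frac{F_v(u_j,u_{j+1})}{\Dx} + \frac{A'(u_{j+1})}{\Dx^2},
$$
where nonnegativity uses that $F$ is monotone ($F_u\ge0$, $F_v\le0$) and $A$ is nondecreasing ($A'\ge0$). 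The two structural features I rely on are that the coefficients of $w_{j-1},w_j,w_{j+1}$ \emph{sum to zero} in each equation (so constant sequences are stationary) and that the off-diagonal coefficients are \emph{nonnegative} (so the generator is cooperative).

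From this structure the two bounds follow by the standard monotone-scheme arguments. For \eqref{FluxDiffBoundInf} I would compare $\seq{w_j(t)}$ against the constant super- and subsolutions $\sup_k w_k^0$ and $\inf_k w_k^0$: vanishing row sums make these stationary, and the comparison (maximum) principle for the cooperative system yields $\inf_k w_k^0 \le w_j(t) \le \sup_k w_k^0$, i.e. $\norm{w(t)}_{\ell^\infty}\le\norm{w^0}_{\ell^\infty}$. For \eqref{FluxDiffBoundBV} I set $p_j := w_{j+1}-w_j$ and differentiate once more to obtain
$$
\frac{d}{dt}p_j = \beta_j\,p_{j-1} - (\beta_{j+1}+\gamma_j)\,p_j + \gamma_{j+1}\,p_{j+1},
$$
whose off-diagonal coefficients are again nonnegative and whose \emph{column} sums vanish. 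The usual computation $\frac{d}{dt}\sum_j\abs{p_j} = \sum_j\sgn{p_j}\frac{d}{dt}p_j \le 0$, using $\sgn{p_j}\,p_{j\pm1}\le\abs{p_{j\pm1}}$ and reindexing, then shows that $t\mapsto\abs{w(t)}_{BV}=\sum_j\abs{p_j(t)}$ is nonincreasing, which is \eqref{FluxDiffBoundBV}; the Lipschitz claim follows as noted above.

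The main obstacle is rigor for the infinite system rather than the algebra. One must justify that the Banach-space solution of \eqref{SemiDiscEqu} is differentiable enough for $\seq{w_j}$ and $\seq{p_j}$ to solve genuine linear ODEs in $\ell^1(\Z)$ with the stated (time-dependent) coefficients, and one must legitimize the comparison principle together with the termwise differentiation of $\sum_j\abs{p_j}$, despite the absence of a maximizing index and the need to control tails. All of this is supplied by the same monotone-operator, nonexpansive-semigroup framework used to establish Lemma \ref{SemiDiscProp}: the linearized operators governing $w$ and $p$ are Lipschitz on $\ell^1$ and share the cooperative, (row- respectively column-) conservative structure of $\mathcal{A}$, so the contraction and comparison estimates transfer (cf. \cite{Evje:1999et}).
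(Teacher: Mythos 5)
Your proposal is correct and follows essentially the same route as the paper: the paper sets $v_j = \Dp A(u_j)-F(u_j,u_{j+1})$ (your $-w_j$), derives exactly your cooperative, row-sum-zero linear system in the form $\tfrac{dv_j}{dt}=\bigl(\tfrac{1}{\Dx}a(u_{j+1})-F_v\bigr)\Dp v_j-\bigl(\tfrac{1}{\Dx}a(u_j)+F_u\bigr)\Dm v_j$, obtains \eqref{FluxDiffBoundInf} by a local-extremum maximum principle (using $v\in\ell^1$ where you invoke constant super/subsolutions) and \eqref{FluxDiffBoundBV} by the same $\sgn{v_{j+1}-v_j}$ computation you organize through $p_j$, and leaves the $\ell^1$ Lipschitz claim to the identity $\norm{\tfrac{du}{dt}}_1=\abs{v}_{BV}$ just as you do.
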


\begin{proof}
  The proof follows \cite{Evje:2000vn}.  Let $v_j = \Dx\sum_{k \le
    j}\tfrac{d u_k}{dt}$.  Then $v_j$ satisfies
  \begin{equation}\label{defV}
    v_j = \Dx\sum_{k=-\infty}^j
    \Dm(\Dp A(u_k)-F(u_k,u_{k+1})) = \Dp A(u_j)-F(u_j,u_{j+1}),
  \end{equation}
  and we may define $v_j$ for all $t \in [0,T]$. Note that
  $\{v_j(t)\}$ is in $\ell^1$ for all $t$ by
  Lemma~\ref{SemiDiscProp}. Differentiating \eqref{defV} with respect
  to $t$ we obtain
  \begin{align*}
    \frac{dv_j}{dt} &=
    \frac{1}{\Dx}\Bigl[a(u_{j+1})\frac{du_{j+1}}{dt}-a(u_j)\frac{du_j}{dt}\Bigr]
    \\ &\qquad - F_u(u_j,u_{j+1})\frac{du_j}{dt} -
    F_v(u_j,u_{j+1})\frac{du_{j+1}}{dt},
  \end{align*}
  where $a(u)=A'(u)$.  Note that $\Dm v_j = \tfrac{d u_j}{dt}$ and
  $\Dp v_j =\tfrac{du_{j+1}}{dt}$. Therefore
  \begin{multline}\label{VDerExpression}
    \frac{d v_j}{dt}
    = \left(\frac{1}{\Dx}a(u_{j+1})-F_v(u_j,u_{j+1})\right)\Dp v_j \\
    - \left(\frac{1}{\Dx}a(u_j) + F_u(u_j,u_{j+1})\right)\Dm v_j.
  \end{multline}
  Assume $v_{j_0}(t_0)$ is a local maximum in $j$. Then $\Dp
  v_{j_0}(t_0) \le 0$ and $\Dm v_{j_0}(t_0) \ge 0$ so
  $\tfrac{v_{j_0}}{dt}(t_0) \le 0$ since $F$ is monotone. Similarly,
  if $v_{j_0}(t_0)$ is a local minimum in $j$, then
  $\tfrac{v_{j_0}}{dt}(t_0)\ge 0$. Then inequality
  \eqref{FluxDiffBoundInf} follows by the fact that $\seq{v_j(t)} \in
  \ell^1$.  Consider \eqref{FluxDiffBoundBV}. We want to show that
  $\tfrac{d}{dt}\left(|v(t)|_{BV}\right) \le 0$.  Now,
  \begin{equation*}
    \frac{d}{dt}\Bigl(\sum_j\abs{v_{j+1}-v_{j}}\Bigr) 
    =\sum_j \sgn{v_{j+1}-v_j}\frac{d}{dt}\left(v_{j+1}-v_j\right), 
  \end{equation*}
  so we may use \eqref{VDerExpression}. Thus
  \begin{align*}
    & \frac{d}{dt}\abs{v(t)}_{BV} \\ & \quad =
    \sum_j\left(\frac{1}{\Dx}a(u_{j+2})-F_v(u_{j+1},u_{j+2})\right)
    \left(\Dp v_{j+1}\right)\sign(v_{j+1}-v_j)\\
    &\quad \quad - \sum_j\left(\frac{1}{\Dx}a(u_{j+1})
      + F_u(u_{j+1},u_{j+2})\right)\abs{\Dp v_j} \\
    &\quad \quad -
    \sum_j\left(\frac{1}{\Dx}a(u_{j+1})-F_v(u_j,u_{j+1})\right)
    \abs{\Dp v_j} \\
    &\quad \quad + \sum_j\left(\frac{1}{\Dx}a(u_j)
      + F_u(u_j,u_{j+1})\right)((\Dm v_j)\sign(v_{j+1}-v_j)) \\
    &\quad =\sum_j
    \left(\frac{1}{\Dx}a(u_{j+1})-F_v(u_j,u_{j+1})\right)
    \left[\left(\Dp v_{j}\right)\sgn{v_{j}-v_{j-1}}-\abs{\Dp v_{j}}\right]\\
    &\quad\quad +\sum_j\left(\frac{1}{\Dx}a(u_j) +
      F_u(u_j,u_{j+1})\right)
    \left[(\Dm v_j)\sign(v_{j+1}-v_j)-\abs{\Dm v_j}\right]\\
    & \quad \le 0,
  \end{align*}
  since $a(u)>0$, $F_v\le 0$, and $F_u\ge 0$. Given the preceding
  estimates, the $\ell^1$ Lipschitz 
  continuity is straightforward to prove.
\end{proof}

It turns out that we need more conditions on $F$ than mere
monotonicity.
\begin{definition}\label{def:entropyflux}
  Given an entropy pair $(\psi,q)$ and a numerical flux $F$, we define
  $Q \in C^1(\R^2)$ by
  \begin{align*}
    Q(u,u) &= q(u), \\
    \frac{\partial}{\partial v}Q(v,w) &
    = \psi'(v)\frac{\partial}{\partial v}F(v,w), \\
    \frac{\partial}{\partial w}Q(v,w) & =
    \psi'(w)\frac{\partial}{\partial w}F(v,w).
  \end{align*}
  We call $Q$ a numerical entropy flux.
\end{definition}

The next lemma gives a sufficient condition on the numerical flux to
ensure that there exists a numerical entropy flux.

\begin{lemma}\label{existenceOfPotential}
  Given a two-point numerical flux $F$, assume that there exist $C^1$
  functions $F_1, F_2$ such that
  \begin{equation}\label{splitFlux}
    F(u,v) = F_1(u) + F_2(v), \qquad 
    F_1'(u) + F_2'(u) = f'(u), 
  \end{equation}
  for all relevant $u$ and $v$. Then there exists a numerical entropy
  flux $Q$ for any entropy flux pair $(\psi,q)$.
\end{lemma}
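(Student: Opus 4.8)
The plan is to use the additive splitting \eqref{splitFlux} to decouple the two defining relations for $Q$ in Definition~\ref{def:entropyflux}, reducing the existence question to an elementary integration in each variable separately. Since $F(v,w) = F_1(v)+F_2(w)$, we have $\pd{F}{v}(v,w) = F_1'(v)$ and $\pd{F}{w}(v,w) = F_2'(w)$, so the prescribed partial derivatives become
\begin{equation*}
  \pd{Q}{v}(v,w) = \psi'(v)\,F_1'(v), \qquad
  \pd{Q}{w}(v,w) = \psi'(w)\,F_2'(w).
\end{equation*}
The decisive feature is that the right-hand side of the first equation depends on $v$ alone and that of the second on $w$ alone. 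Consequently the integrability (equality of mixed partials) is automatic, since $\partial_w\partial_v Q$ and $\partial_v\partial_w Q$ both vanish identically. This is precisely where the hypothesis \eqref{splitFlux} is used: for a general flux one would instead need $\psi'(v)\,\partial^2_{wv}F = \psi'(w)\,\partial^2_{vw}F$, an identity that cannot hold for every entropy $\psi$.

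With compatibility in hand, I would simply exhibit the potential explicitly by setting
\begin{equation*}
  Q(v,w) := \int_0^v \psi'(s)\,F_1'(s)\,ds
  + \int_0^w \psi'(s)\,F_2'(s)\,ds + q(0).
\end{equation*}
Because $F_1,F_2\in C^1$ and $\psi'$ is continuous, the fundamental theorem of calculus yields $Q\in C^1(\R^2)$ together with the two partial-derivative identities above, as required.

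It then remains to verify the diagonal normalization $Q(u,u)=q(u)$. Using $F_1'+F_2'=f'$ from \eqref{splitFlux} and the entropy relation $q'=\psi' f'$ that defines the pair $(\psi,q)$, I would compute
\begin{equation*}
  Q(u,u) = \int_0^u \psi'(s)\bigl(F_1'(s)+F_2'(s)\bigr)\,ds + q(0)
  = \int_0^u q'(s)\,ds + q(0) = q(u).
\end{equation*}
This completes the construction. The only genuine obstacle here is conceptual rather than computational: recognizing that the splitting assumption \eqref{splitFlux} is exactly the structural condition that renders the otherwise overdetermined system for $Q$ consistent. I would close by noting that $Q$ is determined uniquely up to an additive constant, which the consistency requirement $Q(u,u)=q(u)$ then fixes.
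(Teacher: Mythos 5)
Your proposal is correct and is essentially the paper's own proof: both construct $Q$ explicitly as the sum of the one-variable antiderivatives of $\psi'F_1'$ and $\psi'F_2'$ plus a constant, and verify the diagonal condition $Q(u,u)=q(u)$ using $F_1'+F_2'=f'$ and $q'=\psi'f'$. The extra remarks on compatibility of mixed partials are a harmless elaboration of the same argument.
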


\begin{proof}
  Let $(\psi,q)$ be an entropy pair. Then $q$ has the form
  \begin{equation*}
    q(u) = \int_c^u \psi'(z)f'(z) \dz + C,
  \end{equation*}
  for some constant $C$. Define $Q$ by
  \begin{equation}\label{defNumericalFluxPotential}
    Q(u,v) = \int_c^u\psi'(z)F_1'(z) \dz 
    + \int_c^v \psi'(z)F_2'(z) \dz + C.
  \end{equation}
  It is easily verified that $Q$ is a numerical entropy flux.
\end{proof}
 
Let us list a few numerical flux functions to which Lemma
\ref{existenceOfPotential} applies.

\begin{example}[Engquist-Osher flux]
  Let
  \begin{equation*}
    f_+'(s) = \max (f'(s),0) \qquad \text{and} \qquad 
    f_-'(s) = \min (f'(s),0). 
  \end{equation*}
  Then, in the terminology of Lemma~\ref{existenceOfPotential}, let
  $F(u,v) = F_1(u) + F_2(v)$ with
  \begin{equation*}
    F_1(u) = f(0) + \int_0^u f_+'(s) \, ds 
    \qquad \text{and} \qquad 
    F_2(v) = \int_0^v f_-'(s) \, ds.
  \end{equation*}
  It is easily seen that the criteria given in
  Lemma~\ref{existenceOfPotential} are satisfied, and $F$ is also
  clearly monotone.
\end{example}

\begin{example} 
  Let $a,b \in \R$ and define
  \begin{equation*}
    F_1(u) = af(u) + bu \qquad 
    \text{and} \qquad F_2(v) = (1-a)f(v)-bv.
  \end{equation*}
  Note that $F(u,v) = F_1(u) + F_2(v)$ is monotone if
  \begin{equation*}
    a \inf_u\{f'(u)\} \ge -b \quad  
    \text{and} \quad (1-a)\sup_u\{f'(u)\} \le b.
  \end{equation*}
  This example includes both the upwind scheme and the Lax-Friedrichs
  scheme.
\end{example}

From a more general point of view we may consider any 
flux splitting, that is, $f(u) = f^+(u) + f^-(u)$ 
with $(f^+)'(u) \ge 0$ and $(f^-)'(u) \le 0$ for all $u \in \R$.  
Then the numerical flux
$$
F(u,v) = f^+(u) + f^-(v)
$$
satisfies the assumptions of Lemma~\ref{existenceOfPotential}. Note
also that any convex combination of numerical flux functions which
satisfy the hypothesis of Lemma~\ref{existenceOfPotential}, itself
satisfies the assumptions of the lemma.
  
If \eqref{splitFlux} holds, then we have a representation of $Q$ given
by \eqref{defNumericalFluxPotential}. It follows that
\begin{equation}\label{eq:NumFluxPres}
  Q(u,v) = q(u) + \int_u^v \psi'(z)F_2'(z) \dz.
\end{equation}
Note that we may obtain another representation depending on $F_1$ by
splitting up the first integral.

\section{Error estimate}\label{sec: the error estimate}
Let $\seq{u_j}_{j\in\Z}$ be the solution to \eqref{SemiDiscEqu}. We
associate with it the piecewise constant function
\begin{equation}\label{eq:udxdef}
  u_\Dx(x,t)=u_j(t) \quad  \text{for $x\in I_j$.}
\end{equation}
To derive the error estimate we need many of the uniform bounds from
Sections~\ref{sec:prelim} and \ref{sec: the finite difference scheme}.
For these estimates to hold independently of $\Dx$, we make the
following assumptions on the initial data $u^0$:
\begin{itemize}
\item[(i)] $u^0 \in L^1(\R)\cap L^\infty(\R)\cap BV(\R)$.
\item[(ii)] $A(u^0)_x\in BV(\R)$.
\end{itemize}

We may now state the theorem.
\begin{theorem}\label{MainResultSemiDisc}
  Let $u$ be the entropy solution to \eqref{MainProblem} and
  $\seq{u_j(t)}_{j\in\Z}$ solve the semi-discrete difference scheme
  \eqref{SemiDiscEqu}.  If $u^0$ satisfies \textnormal{(i)} and
  \textnormal{(ii)} above, then for all sufficiently small $\Dx$,
  \begin{equation*}%\label{eq:MainResultSemiDisc}
    \norm{u_\Dx(\cdot,t)-u(\cdot,t)}_{L^1(\R)}
    \le \norm{u_\Dx^0-u^0}_{L^1(\R)} + C_T \Dx^{\frac{1}{3}}, 
    \qquad t \in [0,T],
  \end{equation*}
  where the constant $C_T$ depends on $A$, $f$, $u^0$, and $T$, but
  not on $\Dx$.
\end{theorem}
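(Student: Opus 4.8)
The plan is to adapt the Kruzkov--Kuznetsov doubling-of-variables method to the degenerate parabolic setting, mimicking at the discrete level the cancellation of the parabolic dissipation that underlies \eqref{eq:uniq-tmp}. I would use two independent sets of variables, $(x,t)$ for the exact entropy solution $u$ and $(y,s)$ for the piecewise constant scheme solution $u_\Dx$, and a nonnegative test function $\test(x,t,y,s)=\omega_\rho(x-y)\,\theta(t,s)$, where $\omega_\rho$ is a spatial mollifier of width $\rho$ and $\theta$ localizes $s\approx t$ so that the diagonal limit extracts $\norm{u_\Dx(\cdot,t)-u(\cdot,t)}_{L^1(\R)}$ together with the initial-data term $\norm{u_\Dx^0-u^0}_{L^1(\R)}$.

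First I would derive a discrete entropy inequality for \eqref{SemiDiscEqu}. Multiplying the scheme by $\signe(u_j-c)$ and exploiting the monotonicity of $F$ together with the numerical entropy flux $Q$ furnished by Definition~\ref{def:entropyflux} and Lemma~\ref{existenceOfPotential}, I expect a cellwise inequality of the schematic form
\[
  \frac{d}{dt}\abseps{u_j-c}+\Dm Q(u_j,u_{j+1})-\Dm\Dp\Phi_j \le -\,\mathcal{D}_j,
\]
where $\Phi_j$ approximates $\abs{A(u_j)-A(c)}$ and $\mathcal{D}_j\ge 0$ is a discrete dissipation. The Lipschitz regularization $\signe$ makes these manipulations rigorous; following the mechanism announced in the introduction, its error contributions can be sent to zero independently of $\Dx$, so I would pass $\eps\to 0$ at the end rather than balance $\eps$ against $\Dx$.

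The core of the argument is then to add the continuous entropy inequality (D.3) for $u$, with $c=u_k(s)$, to the summed discrete inequality for $u_\Dx$, with $c=u(x,t)$, integrated against $\test$, and to reproduce the mixed term $-2\partial_{xy}^2\abs{A(u)-A(u_\Dx)}$ of \eqref{eq:uniq-tmp} at the discrete level. Because there is no chain rule for finite differences, I would expand $\Dm\Dp A(u_j)$ explicitly, transfer the second differences onto $\test$ by summation by parts, and match them against the continuous $\partial_x^2A(u)$ contributions, so that the two individual parabolic dissipations and the cross term assemble into a perfect square that enters with a favorable sign, exactly as on the right-hand side of \eqref{eq:uniq-tmp}, and may therefore be dropped. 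What remains are consistency errors: a convection error handled with the flux-$BV$ bound of Lemma~\ref{SemiDiscFluxDiffBounds}, a time-regularity error handled with the $\ell^1$ Lipschitz-in-time continuity of the same lemma, a spatial comparison error of size $\order{\rho}$ coming from the $BV$ bounds of Lemmas~\ref{lem:1.1} and \ref{SemiDiscProp}, and a diffusion consistency error which I expect to scale like $\Dx/\rho^2$, since testing the discrete second difference against the mollifier produces second derivatives of the kernel of size $\rho^{-2}$ weighted by a truncation factor $\Dx$.

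Collecting these contributions and passing $\eps\to 0$, I expect an estimate of the form $\norm{u_\Dx(\cdot,t)-u(\cdot,t)}_{L^1(\R)}\le \norm{u_\Dx^0-u^0}_{L^1(\R)}+C(\rho+\Dx/\rho^2)$, after choosing the temporal localization width comparably. Optimizing the balance $\rho\sim\Dx/\rho^2$ gives $\rho\sim\Dx^{1/3}$ and hence the stated rate $\Dx^{1/3}$. The main obstacle is the third step: faithfully replicating the delicate $A$-cancellation of \eqref{eq:uniq-tmp} in the discrete framework, where the absence of a finite-difference chain rule forces one to keep track of cross terms such as $\Dp A(u_j)\,\partial_xA(u)$ and to show that, after summation by parts, they combine with the individual dissipations into a controllable nonnegative square rather than a term of indefinite sign. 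Carrying this out cleanly, uniformly in $\Dx$ and without recourse to $\eps$, is exactly what lets the rate improve from the $\Dx^{1/11}$ of \cite{Karlsen:2010uq} to $\Dx^{1/3}$.
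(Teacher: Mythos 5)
Your overall architecture matches the paper's: doubling of variables with a Kuznetsov-type test function, a discrete entropy inequality built from the monotone flux and the numerical entropy flux of Lemma~\ref{existenceOfPotential}, sending $\eps\to 0$ independently of $\Dx$, and balancing $\rho\sim\Dx/\rho^{2}$ to get the rate $\Dx^{1/3}$. (One detail worth flagging: the paper multiplies the scheme by $\signe(A(u_j)-A(c))$, i.e.\ by $\psie'(u_j,c)$, not by $\signe(u_j-c)$; this choice is what makes the Carrillo-type dissipation identity of Lemma~\ref{lemma:EntropyCalcCont} and its discrete analogue assemble correctly, and the argument also requires first regularizing $A\mapsto A+\eta\,\mathrm{id}$ so that $A'>0$, with $\eta\to0$ at the very end.)

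The genuine gap is in your central step. You assert that the two parabolic dissipations and the cross term combine into a nonnegative perfect square ``exactly as on the right-hand side of \eqref{eq:uniq-tmp}, and may therefore be dropped.'' At the discrete level this is only partially true, and the part that is \emph{not} a perfect square is precisely where the work, and the loss from rate $1/2$ to $1/3$, occurs. In the paper's Lemma~\ref{lemma:LowerBoundSquareTerm}, the error term $\Ee_j$ decomposes into (a) the droppable squares $\tfrac12\signe'(A(\tau^\pm_j)-A(u))(D_\pm A(u_j)-A(u)_y)^2$, (b) a remainder that in the limit $\eps\to0$ becomes the discrete quantity $\Dm\bigl(\Dp\sign(A(u_\Dx)-A(u))[\tfrac12(A(u_\Dx)+A(S_\Dx u_\Dx))-A(u)]\bigr)$, of indefinite sign, whose estimate (Estimate~\ref{estimate:DissTermSchemeErr}) produces the rate-limiting contribution $\Dx/r^2$, and (c) the terms $\tfrac12\bigl(\zetae(u_j,\taum_j,u)+\zetae(u_j,\taup_j,u)\bigr)(A(u)_y)^2$, which are singular as $\eps\downto0$ (since $\signe'\sim\eps^{-1}$) and cannot be bounded by brute force; the paper controls them by recognizing them as $\partial_y\Psi'_{\eps,j}(u)A(u)_y$ for a new entropy triple and re-using the PDE itself (Estimate~\ref{est:Re}). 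Your proposal does not anticipate either leftover, and it misattributes the $\Dx/\rho^2$ term to the consistency error of the second differences of the test function, which in fact contributes only $\Dx^2/r^3$ (Estimate~\ref{DoubleDerEst}), subdominant at the optimal scaling. If the dissipation really were a pure droppable square, the Kuznetsov machinery would return the rate $1/2$; the appearance of $1/3$ is itself the signal that terms (b) and (c) must be identified and estimated, and supplying those arguments is the essential missing content of your proof.
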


Let us define some of the functions we are going to work with.  First,
we will use the following approximation of the $\sign$ function:
\begin{equation*}
  \signe (\sigma) 
  = 
  \begin{cases}
    \sin(\frac{\pi\sigma}{2\eps}) &\text{for $\abs{\sigma}<\eps$},\\
    \sgn{\sigma} &\text{otherwise,}
  \end{cases}
\end{equation*}
where $\eps > 0$. Note that $\signe$ is continuously differentiable
and non-decreasing. We define
 $$
 \abseps{u} = \int_0^u \signe(z) \dz.
$$
Furthermore, we introduce an entropy pair $(\psie,\qe)$ defined by
\begin{align*}
  \psie(u,c) &= \int_c^u \signe(A(z)-A(c)) \dz, \\
  \qe(u,c) &= \int_c^u \psie'(z,c)f'(z)\dz = \int_c^u
  \signe(A(z)-A(c))f'(z) \dz,
\end{align*}
where $\psie'$ is the derivative with respect to the first variable.

\begin{lemma}\label{lemma:EntropyCalcCont}
  Suppose $A' > 0$. Let $u = u(y,s)$ be the classical solution of
  \eqref{MainProblem}. Then for any constant $c \in \R$,
$$
\partial_s\psie(u,c) + \partial_y \qe(u,c)
- \partial_y^2\abseps{A(u)-A(c)} = - \partial_y\psie'(u,c)\partial_y
A(u).
$$
\end{lemma}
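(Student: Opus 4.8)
The plan is to exploit that the hypothesis $A' > 0$ makes \eqref{MainProblem} uniformly parabolic, so that $u(y,s)$ is a smooth classical solution and every derivative below is justified by the ordinary chain rule, with no distributional subtlety to handle. The asserted identity then reduces to a direct pointwise computation, and the only genuine point requiring care is matching the surviving term to the prescribed right-hand side.

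First I would differentiate the three terms on the left separately. Differentiating the defining integrals with respect to their upper limits gives $\psie'(u,c) = \signe(A(u)-A(c))$ and $\partial_u\qe(u,c) = \signe(A(u)-A(c))f'(u)$. Hence the chain rule yields the key intermediate identities
\begin{equation*}
  \partial_s \psie(u,c) = \signe(A(u)-A(c))\,\partial_s u,
  \qquad
  \partial_y \qe(u,c) = \signe(A(u)-A(c))\,\partial_y f(u),
\end{equation*}
where in the flux term I have used $f'(u)\,\partial_y u = \partial_y f(u)$.

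Next I would handle the second-order term. Writing $w = A(u)-A(c)$ and recalling $\frac{d}{dw}\abseps{w} = \signe(w)$, the first derivative is $\partial_y \abseps{A(u)-A(c)} = \signe(A(u)-A(c))\,\partial_y A(u)$ (here $c$ is constant). Differentiating once more via the product rule produces the expansion
\begin{equation*}
  \partial_y^2 \abseps{A(u)-A(c)}
  = \signe'(A(u)-A(c))\,\bigl(\partial_y A(u)\bigr)^2
  + \signe(A(u)-A(c))\,\partial_y^2 A(u).
\end{equation*}

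Finally I would assemble the left-hand side and factor out $\signe(A(u)-A(c))$, which then multiplies $\partial_s u + \partial_y f(u) - \partial_y^2 A(u)$; this bracket vanishes identically since $u$ solves \eqref{MainProblem}. The only surviving contribution is $-\signe'(A(u)-A(c))\bigl(\partial_y A(u)\bigr)^2$. To close the argument I would recognize this as the stated right-hand side: from $\psie'(u,c) = \signe(A(u)-A(c))$ the chain rule gives $\partial_y \psie'(u,c) = \signe'(A(u)-A(c))\,\partial_y A(u)$, so that $-\partial_y \psie'(u,c)\,\partial_y A(u) = -\signe'(A(u)-A(c))\bigl(\partial_y A(u)\bigr)^2$, which matches exactly. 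The main (and essentially only) obstacle is bookkeeping: tracking which factors carry $\signe$ versus $\signe'$, and verifying that the cross term generated by the product rule in the second-order computation is precisely what the right-hand side encodes.
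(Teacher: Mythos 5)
Your proof is correct and is essentially the paper's own argument: both reduce to multiplying the equation by $\psie'(u,c)=\signe(A(u)-A(c))$, using the chain-rule identity $\partial_y\abseps{A(u)-A(c)}=\psie'(u,c)\partial_y A(u)$, and absorbing the surviving cross term $-\signe'(A(u)-A(c))(\partial_y A(u))^2$ into the right-hand side. The only cosmetic difference is that you expand all three left-hand terms and invoke the PDE at the end, whereas the paper starts from the PDE and rewrites $\psie'(u,c)\partial_y^2A(u)$ by the product rule.
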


\begin{proof}
  Multiply equation \eqref{MainProblem} by $\psie'(u,c)$ to obtain
  \begin{displaymath}
    \partial_s\psie(u,c) + \partial_y \qe(u,c) = \psie'(u,c)\partial_y^2 A(u).
  \end{displaymath}
  The term on the right may be rewritten according to
  \begin{displaymath}
    \partial_y(\psie'(u,c)\partial_y A(u)) 
    = \partial_y\psie'(u,c)\partial_y A(u) 
    + \psie'(u,c)\partial_y^2 A(u).
  \end{displaymath}
  By the chain rule
$$
\partial_y(\psie'(u,c)\partial_y A(u))
= \partial_y^2\abseps{A(u)-A(c)}.
$$
Combining these equalities proves the lemma.
\end{proof}

The next lemma is a simple identity 
taken from \cite{Cockburn:1996}.
\begin{lemma}\label{lemma:ChainRule2}
  For any differentiable function $g$ and all real numbers $a,b,c$,
  \begin{align*}%\label{eq:ChainForm}
    \psie'(a,c)(g(b)-g(a)) & = \int_c^b \psie'(z,c)g'(z)\,dz-\int_c^a
    \psie'(z,c)g'(z) \,dz \\ & \qquad + \int_a^b
    \psie''(z,c)(g(z)-g(b))\dz.
  \end{align*}
\end{lemma}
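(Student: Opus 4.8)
The plan is to reduce the claimed identity to a single integration by parts. First I would combine the two integrals on the right-hand side, which are taken over the intervals $[c,b]$ and $[c,a]$. Since
$$
\int_c^b \psie'(z,c)g'(z)\dz-\int_c^a \psie'(z,c)g'(z)\dz = \int_a^b \psie'(z,c)g'(z)\dz,
$$
the assertion of the lemma is equivalent to the reduced identity
$$
\psie'(a,c)(g(b)-g(a)) = \int_a^b \psie'(z,c)g'(z)\dz + \int_a^b \psie''(z,c)(g(z)-g(b))\dz,
$$
where $\psie''$ denotes the second derivative with respect to the first argument.

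The key step is then to integrate the first integral on the right by parts. The clever choice is to use the antiderivative $z\mapsto g(z)-g(b)$ of $g'$ (rather than $g$ itself), which is engineered so that the boundary contribution at $z=b$ vanishes. This yields
$$
\int_a^b \psie'(z,c)g'(z)\dz = \bigl[\psie'(z,c)(g(z)-g(b))\bigr]_{z=a}^{z=b} - \int_a^b \psie''(z,c)(g(z)-g(b))\dz.
$$
At $z=b$ the bracket is $\psie'(b,c)\cdot 0 = 0$, while at $z=a$ it equals $\psie'(a,c)(g(a)-g(b))$, so the evaluated term is $-\psie'(a,c)(g(a)-g(b)) = \psie'(a,c)(g(b)-g(a))$. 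Substituting this back and transferring the $\psie''$ integral to the other side reproduces the reduced identity exactly, completing the argument.

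There is no genuine obstacle in this computation; the identity is essentially Taylor's formula with integral remainder dressed up for the weight $\psie'$. The only point requiring a moment's care is the legitimacy of the integration by parts, i.e.\ that $\psie''(\cdot,c)$ exists and is integrable. This is immediate from the definitions: since $\signe$ is continuously differentiable and $f,A$ are locally $C^1$, the map $z\mapsto \psie'(z,c)=\signe(A(z)-A(c))$ is $C^1$, so $\psie''(\cdot,c)$ is continuous and the manipulation above is valid for every differentiable $g$ and all $a,b,c\in\R$.
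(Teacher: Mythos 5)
Your proof is correct and is essentially the same argument as the paper's: both rest on a single integration by parts using the antiderivative $z\mapsto g(z)-g(b)$ so that the boundary contribution at $z=b$ vanishes, the paper merely organizing it as an identity in a free endpoint $\zeta$ evaluated at $\zeta=a$ and $\zeta=b$ and subtracted, while you first merge the two integrals into one over $[a,b]$. Your closing remark on the regularity of $\psie'(\cdot,c)=\signe(A(\cdot)-A(c))$ correctly justifies the manipulation.
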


\begin{proof}
  Integration by parts yields
$$
\psie'(\zeta,c)(g(\zeta)-g(b)) = \int_c^\zeta \psie'(z,c)g'(z) \,dz +
\int_c^\zeta \psie''(z,c)(g(z)-g(b))\dz,
$$
for any $\zeta \in \R$. Take the two equations obtained by taking
$\zeta = a$ and $\zeta = b$ and subtract one from the other.
\end{proof}

\begin{lemma}\label{lemma: EntropyCalcDiscrete}
  Let $u_j$ be the solution to \eqref{SemiDiscEqu}.  Then for all $c
  \in \R$
  \begin{align*}%\label{SemiDiscEqu1}
    & \partial_t\psie(u_j,c) + \Dm Q^c(u_j,u_{j+1}) -\Dm\Dp
    \abseps{A(u_j)-A(c)} \\ & \qquad \le
    -\frac{1}{(\Dx)^2}\int^{u_j}_{u_{j+1}}
    \psie''(z,c)(A(z)-A(u_{j+1}))\dz \\ & \qquad \qquad
    -\frac{1}{(\Dx)^2}\int^{u_j}_{u_{j-1}}
    \psie''(z,c)(A(z)-A(u_{j-1}))\dz,
  \end{align*}
  where $Q^c(u,v) := Q^c_1(u) + Q^c_2(v)$,
  \begin{equation*}
    Q^c_1(u) := \int_c^u\psie'(z,c)F_1'(z) \dz, \quad 
    Q^c_2(v) := \int_c^v \psie'(z,c)F_2'(z) \dz,
  \end{equation*}
  for all real numbers $u$ and $v$.
\end{lemma}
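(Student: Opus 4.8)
The plan is to substitute the scheme \eqref{SemiDiscEqu} into $\partial_t\psie(u_j,c)=\psie'(u_j,c)\,\tfrac{d}{dt}u_j$ and then to process the convection and diffusion contributions separately, in each case using Lemma~\ref{lemma:ChainRule2} as a discrete replacement for the chain rule. The single structural fact that drives everything is that $\psie'(u,c)=\signe(A(u)-A(c))$, so that $\psie''(z,c)=\signe'(A(z)-A(c))\,A'(z)\ge 0$, since $\signe$ and $A$ are both nondecreasing.

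For the convection part I would write $\tfrac{d}{dt}u_j=-\Dm F_{j+1/2}+\Dm\Dp A(u_j)$ and split $F=F_1+F_2$ as in \eqref{splitFlux}, so that $\Dm F_{j+1/2}$ separates into an $F_1$-increment across $(u_{j-1},u_j)$ and an $F_2$-increment across $(u_j,u_{j+1})$. Applying Lemma~\ref{lemma:ChainRule2} with $g=F_1$ (taking $a=u_j$, $b=u_{j-1}$) and with $g=F_2$ (taking $a=u_j$, $b=u_{j+1}$), and recalling that $\int_c^u\psie'F_1'\dz=Q^c_1(u)$ and $\int_c^v\psie'F_2'\dz=Q^c_2(v)$, the leading terms assemble precisely into $-\Dm Q^c(u_j,u_{j+1})$. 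What is left over are the two remainder integrals $\tfrac{1}{\Dx}\int_{u_j}^{u_{j-1}}\psie''(z,c)(F_1(z)-F_1(u_{j-1}))\dz$ and $-\tfrac{1}{\Dx}\int_{u_j}^{u_{j+1}}\psie''(z,c)(F_2(z)-F_2(u_{j+1}))\dz$. Using the monotonicity of $F$, i.e. $F_1'\ge 0$ and $F_2'\le 0$, together with $\psie''\ge 0$, a short case distinction on whether $u_j$ lies below or above its neighbour shows that both remainders are $\le 0$; these are exactly the terms that will be discarded to pass from an identity to the claimed inequality.

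For the diffusion part I would apply Lemma~\ref{lemma:ChainRule2} with $g=A$ on $(u_j,u_{j+1})$ and on $(u_j,u_{j-1})$. The key identity is $\int_c^b\psie'(z,c)A'(z)\dz=\abseps{A(b)-A(c)}$, which follows at once from the fundamental theorem of calculus since $\psie'(z,c)A'(z)=\tfrac{d}{dz}\abseps{A(z)-A(c)}$; note that this requires only $A$ nondecreasing and $C^1$, not $A'>0$, and it is the discrete counterpart of the chain-rule step $\partial_y(\psie'\partial_y A)=\partial_y^2\abseps{A(u)-A(c)}$ in Lemma~\ref{lemma:EntropyCalcCont}. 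Consequently the leading pieces of $\psie'(u_j,c)\Dm\Dp A(u_j)$ collapse exactly into $\Dm\Dp\abseps{A(u_j)-A(c)}$, and the two remainder integrals left by Lemma~\ref{lemma:ChainRule2} are precisely the two terms on the right-hand side of the asserted inequality. Adding the convection and diffusion computations thus produces an identity whose right-hand side equals the stated bound plus the two nonpositive convection remainders; discarding these yields the inequality.

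The step requiring the most care is the diffusion bookkeeping: one must check that the ``clean'' contributions produced by Lemma~\ref{lemma:ChainRule2} reassemble \emph{without any error term} into the discrete second difference $\Dm\Dp\abseps{A(u_j)-A(c)}$, which hinges on applying the identity $\int_c^b\psie'A'\dz=\abseps{A(b)-A(c)}$ consistently at the three nodes $u_{j-1},u_j,u_{j+1}$. The convection estimate, by contrast, is essentially sign bookkeeping once the two remainder integrals have been written down.
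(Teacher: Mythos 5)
Your proposal is correct and follows essentially the same route as the paper's proof: multiply the scheme by $\psie'(u_j,c)$, apply Lemma~\ref{lemma:ChainRule2} with $g=F_1$, with $g=F_2$, and twice with $g=A$, reassemble the leading terms into $\Dm Q^c(u_j,u_{j+1})$ and $\Dm\Dp\abseps{A(u_j)-A(c)}$ via the identity $\int_c^b\psie'(z,c)A'(z)\,dz=\abseps{A(b)-A(c)}$, and discard the two convection remainders, which are nonpositive because $\psie''\ge 0$, $F_1'\ge 0$, and $F_2'\le 0$. Your sign bookkeeping on the discarded terms and your identification of the surviving $A$-remainders with the right-hand side of the stated inequality both check out.
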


\begin{proof}
  From \eqref{SemiDiscEqu} it follows that
$$
\psie'(u_j,c)\partial_t u_j + \psie'(u_j,c)\Dm F(u_j,u_{j+1}) =
\psie'(u_j,c)\Dm\Dp A(u_j).
$$
Note that
$$
\psie'(u_j,c)\Dm F(u_j,u_{j+1}) = \psie'(u_j,c)\Dm F_1(u_j) +
\psie'(u_j,c)\Dp F_2(u_j),
$$
and so we may apply Lemma~\ref{lemma:ChainRule2}.  Let $g = F_1$. Then
we obtain
$$
\psie'(u_j,c)\Dm F_1(u_j) = \Dm Q_1^c(u_j) - \frac{1}{\Dx}
\int_{u_j}^{u_{j-1}} \psie''(z,c)(F_1(z)-F_1(u_{j-1}))\dz.
$$
Similarily, let $g = F_2$ to obtain
$$
\psie'(u_j,c) \Dp F_2(u_j) = \Dp Q_2^c(u_j) + \frac{1}{\Dx}
\int_{u_j}^{u_{j+1}}\psie''(z,c)(F_2(z)-F_2(u_{j+1}))\,dz.
$$
Finally, apply lemma \ref{lemma:ChainRule2} twice with $g = A$.
Adding the equations we obtain
\begin{align*}
  \psie'(u_j,c) & \left(A(u_{j-1}) - 2A(u_j) + A(u_{j+1})\right) \\
  &=\int_{u_j}^{u_{j+1}} \psie'(z,c)A'(z)\dz + \int_{u_j}^{u_{j-1}} \psie'(z,c)A'(z)\dz \\
  &\qquad +\int_{u_j}^{u_{j+1}} \psie''(z,c)(A(z)-A(u_{j+1}))\dz \\
  &\qquad +\int_{u_j}^{u_{j-1}} \psie''(z,c)(A(z)-A(u_{j-1}))\dz.
\end{align*}
Note that
\begin{align*}
  &\left[\int_{u_j}^{u_{j+1}} \psie'(z,c)A'(z)\dz +
    \int_{u_j}^{u_{j-1}} \psie'(z,c)A'(z)\dz\right] \\ &
  =\left[\int_{u_j}^{u_{j+1}} \frac{\partial}{\partial
      z}\abseps{A(z)-A(c)}\dz +
    \int_{u_j}^{u_{j-1}}\frac{\partial}{\partial
      z}\abseps{A(z)-A(c)}\dz \right] \\ &
  =\left[\abseps{A(u_{j-1})-A(c)}-2\abseps{A(u_j)-A(c)} +
    \abseps{A(u_{j+1})-A(c)}\right].
\end{align*}
Combining the above computations we obtain
$$
\partial_t\psie(u_j,c) + \Dm Q^c(u_j,u_{j+1}) -\Dm\Dp
\abseps{A(u_j)-A(u)} = -E^c(u_{j-1},u_j,u_{j+1}),
$$
where
\begin{align*}
  E^c(u_{j-1},u_j,u_{j+1})
  & = \frac{1}{\Dx} \int^{u_j}_{u_{j-1}} \psie''(z,c)(F_1(z)-F_1(u_{j-1}))\dz \\
  & \qquad  -\frac{1}{\Dx} \int^{u_j}_{u_{j+1}}\psie''(z,c)(F_2(z)-F_2(u_{j+1}))\dz \\
  & \qquad +\frac{1}{(\Dx)^2}\int^{u_j}_{u_{j+1}} \psie''(z,c)(A(z)-A(u_{j+1}))\dz \\
  & \qquad +\frac{1}{(\Dx)^2}\int^{u_j}_{u_{j-1}}
  \psie''(z,c)(A(z)-A(u_{j-1}))\dz.
\end{align*}
The result follows from the monotonicity of $F$.
\end{proof}

We shall need the next lemma, which deals with a mixed term, in order to
carry out the ``second order" doubling-of-the-variables argument.

\begin{lemma}\label{lemma:MixedTerm}
  Let $\seq{u_j}$ be some sequence and $u$ some differentiable
  function of $y$.  Then
  \begin{align*}
    & \left(\frac{1}{\Dx}\int_{u_{j-1}}^{u_j}\psie''(z,u)\dz +
      \frac{1}{\Dx}\int_{u_j}^{u_{j+1}}\psie''(z,u)\dz
    \right)\partial_yA(u) \\ & \qquad \qquad = -\partial_y(\Dm +
    \Dp)\abseps{A(u_{j})-A(u)}.
  \end{align*}
\end{lemma}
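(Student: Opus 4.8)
The plan is to prove the identity by direct computation of both sides, showing that they reduce to the same expression. There is no genuine obstacle here; the lemma is a bookkeeping identity, and the only point demanding care is to remember that the sequence entries $u_{j-1},u_j,u_{j+1}$ are constants with respect to $y$, whereas $u=u(y)$ is the only $y$-dependent quantity, together with the sign conventions built into $\Dm$ and $\Dp$.

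First I would simplify the left-hand side. Since $\psie'(z,u)=\signe(A(z)-A(u))$ by definition of $\psie$, the integrand $\psie''(z,u)$ is exactly $\partial_z\psie'(z,u)$, so the fundamental theorem of calculus gives
\[
\frac{1}{\Dx}\int_{u_{j-1}}^{u_j}\psie''(z,u)\dz
=\frac{1}{\Dx}\bigl(\signe(A(u_j)-A(u))-\signe(A(u_{j-1})-A(u))\bigr),
\]
and analogously for the integral from $u_j$ to $u_{j+1}$. Adding the two, the terms at $u_j$ telescope away, so the left-hand side becomes
\[
\frac{1}{\Dx}\bigl(\signe(A(u_{j+1})-A(u))-\signe(A(u_{j-1})-A(u))\bigr)\,\partial_y A(u).
\]

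Next I would expand the right-hand side. Using $\Dp\sigma_j=(\sigma_{j+1}-\sigma_j)/\Dx$ and $\Dm\sigma_j=(\sigma_j-\sigma_{j-1})/\Dx$, the middle term cancels again and
\[
(\Dm+\Dp)\abseps{A(u_j)-A(u)}
=\frac{1}{\Dx}\bigl(\abseps{A(u_{j+1})-A(u)}-\abseps{A(u_{j-1})-A(u)}\bigr).
\]
Since $\frac{d}{dw}\abseps{w}=\signe(w)$ and the entries $u_k$ do not depend on $y$, the chain rule yields, for each fixed $k$,
\[
\partial_y\abseps{A(u_k)-A(u)}=-\signe(A(u_k)-A(u))\,\partial_y A(u).
\]
Applying $-\partial_y$ to the displayed difference and substituting this identity reproduces exactly the expression obtained for the left-hand side, which completes the proof. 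The one place to watch is the minus sign: $A(u)$ enters $\abseps{A(u_k)-A(u)}$ with a negative sign, and this is precisely what reconciles the leading $-\partial_y$ on the right with the sign convention in $\Dm$, so the two sides agree without any residual term.
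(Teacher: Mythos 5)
Your proposal is correct and takes essentially the same route as the paper: both proofs are direct computations resting on the fundamental theorem of calculus in $z$ (to collapse $\int\psie''\,dz$ into boundary values of $\signe(A(\cdot)-A(u))$) and the chain rule in $y$ (giving $\partial_y\abseps{A(u_k)-A(u)}=-\signe(A(u_k)-A(u))\,\partial_yA(u)$). The only cosmetic difference is that the paper rewrites the left-hand side for a generic pair $(a,b)$ by pulling $\partial_y$ outside the integral and then telescopes, whereas you reduce both sides independently to the common expression $\frac{1}{\Dx}\bigl(\signe(A(u_{j+1})-A(u))-\signe(A(u_{j-1})-A(u))\bigr)\partial_yA(u)$.
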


\begin{proof}
  Let $a,b$ be fixed real numbers. Then
  \begin{equation*}
    \begin{split}
      & \int_{a}^{b}\psie''(z,u)\dz A(u)_y \\
      & \quad = \int_{a}^{b}\signe'(A(z)-A(u))A(u)_yA'(z)\dz \\
      & \quad = -\frac{\partial}{\partial y}\left(\int_{a}^{b}\signe(A(z)-A(u))A'(z)\dz\right) \\
      & \quad = -\frac{\partial}{\partial
        y}\left(\abseps{A(b)-A(u)}-\abseps{A(a)-A(u)}\right).
    \end{split}
  \end{equation*}
  Let $a=u_{j-1}$, $b=u_j$ and $a=u_j$, $b=u_{j+1}$. Then add up the
  resulting equations and divide by $\Dx$.
\end{proof}

We are now in a position to carry out the doubling-of-the-variables
argument.

\begin{lemma}\label{lemma:DoubelingLemma}
  Suppose $A' > 0$. Let $u = u(y,s)$ be the classical solution to
  \eqref{MainProblem} and let $\seq{u_j} = \seq{u_j(t)}$ be the
  solution to \eqref{SemiDiscEqu}. Then
  \begin{align*}
    & \partial_t\psie(u_j,u) + \partial_s\psie(u,u_j) + \partial_y
    \qe(u,u_j) + \Dm Q^u(u_j,u_{j+1}) \\ & \qquad -(\partial_y^2
    +\partial_y(\Dm +\Dp) + \Dm\Dp)\abseps{A(u_j)-A(u)}) \le -\Ee_j,
  \end{align*}
  where $\Ee_j := \Ee[u](u_{j-1},u_j,u_{j+1})$ with
  \begin{align*}
    \Ee[u](u_{j-1},u_j,u_{j+1})
    &:= \frac{1}{(\Dx)^2}\int^{u_j}_{u_{j+1}} \psie''(z,u)(A(z)-A(u_{j+1}))\dz \\
    &\qquad  + \frac{1}{(\Dx)^2}\int^{u_j}_{u_{j-1}} \psie''(z,u)(A(z)-A(u_{j-1}))\dz \\
    & \qquad - \frac{1}{\Dx}\int_{u_{j-1}}^{u_j}\psie''(z,u)\dz\, \partial_yA(u)  \\
    & \qquad - \frac{1}{\Dx}\int_{u_j}^{u_{j+1}}\psie''(z,u)\dz\, \partial_yA(u) \\
    & \qquad + \partial_y\psie'(u,u_j)\partial_y A(u).
  \end{align*}
\end{lemma}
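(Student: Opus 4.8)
The plan is to combine the two one-sided entropy calculations---Lemma~\ref{lemma:EntropyCalcCont} for the continuous solution and Lemma~\ref{lemma: EntropyCalcDiscrete} for the scheme---and then to install the mixed second-order term by means of Lemma~\ref{lemma:MixedTerm}. The guiding principle is the doubling-of-the-variables device: the continuous variables $(y,s)$ and the discrete pair (index $j$ and time $t$) are independent, so in each lemma the ``other'' solution may be frozen and treated as the constant $c$.

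First I would apply Lemma~\ref{lemma:EntropyCalcCont} with $c=u_j(t)$. Since $u_j$ does not depend on $(y,s)$, it is a legitimate constant for the operators $\partial_s$ and $\partial_y$, and the lemma gives
\[
\partial_s\psie(u,u_j)+\partial_y\qe(u,u_j)-\partial_y^2\abseps{A(u)-A(u_j)}=-\partial_y\psie'(u,u_j)\partial_yA(u).
\]
Because $\signe$ is odd, $\abseps{\cdot}$ is even, so $\abseps{A(u)-A(u_j)}=\abseps{A(u_j)-A(u)}$. Next I would apply Lemma~\ref{lemma: EntropyCalcDiscrete} with the constant $c=u(y,s)$, which is legitimate because $u$ does not depend on the discrete time $t$; here $Q^c$ becomes $Q^u$, and the right-hand side supplies the first two integral terms of $\Ee_j$.

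Adding these two relations (an equality plus an inequality, hence an inequality) produces exactly the asserted left-hand side \emph{except} that the second-order operator acting on $\abseps{A(u_j)-A(u)}$ is $\partial_y^2+\Dm\Dp$ rather than $\partial_y^2+\partial_y(\Dm+\Dp)+\Dm\Dp$. To repair this I would subtract $\partial_y(\Dm+\Dp)\abseps{A(u_j)-A(u)}$ from both sides. On the left this completes the operator to the required form; on the right, Lemma~\ref{lemma:MixedTerm} rewrites this mixed term as the two remaining $\tfrac{1}{\Dx}$-integrals of $\psie''(\cdot,u)$ against $\partial_yA(u)$. Collecting the five resulting contributions on the right-hand side then reproduces precisely $-\Ee_j$.

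The argument is essentially a matter of assembling three already-proved identities, so I expect no deep obstacle; the only real care is in the sign and orientation bookkeeping---in particular the evenness of $\abseps{\cdot}$, the orientation of the integrals $\int_{u_{j-1}}^{u_j}$ versus $\int_{u_j}^{u_{j-1}}$, and checking that each of the five terms emerging from the combination matches the corresponding term in the definition of $\Ee_j$ with the correct sign.
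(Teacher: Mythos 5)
Your proposal is correct and follows essentially the same route as the paper, whose proof is the one-line instruction to take $c=u_j$ in Lemma~\ref{lemma:EntropyCalcCont}, $c=u$ in Lemma~\ref{lemma: EntropyCalcDiscrete}, and add the results together with Lemma~\ref{lemma:MixedTerm}. Your additional bookkeeping (the evenness of $\abseps{\cdot}$, the legitimacy of freezing the ``other'' variables, and the sign of the mixed term) is accurate and simply makes explicit what the paper leaves to the reader.
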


\begin{proof}
  Let $c = u_j$ in Lemma~\ref{lemma:EntropyCalcCont} and $c = u$ in
  Lemma~\ref{lemma: EntropyCalcDiscrete}.  Then add up the equations
  together with Lemma~\ref{lemma:MixedTerm}.
\end{proof}

\begin{remark}
Note that $\Ee_j$ is a function of $y,s,t$.
\end{remark}

In what follows it will be necessary to work with the piecewice
constant approximation defined in \eqref{eq:udxdef}. To do this we
introduce some new notation.  Let the shift operator $S_\sigma$ be
defined for any $\test:\Pi_T\to \R$ by
$$
S_\sigma\test(x,t) = \test(x + \sigma,t),
$$
and the difference quotient be defined by
$$
D_{\pm} \test = \pm \frac{S_{\pm \Dx}\test-\test}{\Dx}.
$$
Note that for any two functions $u, v$ of $x$ we have $\Dp(uv) = S_\Dx
u \Dp v + (\Dp u)v$.  If $uv$ has compact support it follows that
$$
\int_\R(\Dp u)v \,dx = -\int_\R u\Dm v\, dx.
$$
We will refer to these identities as the Leibniz rule for difference
quotients and integration by parts for difference quotients. We will
frequently integrate over the domain $\Pi_T^2$. To avoid writing four
integral signs we will in general write one for each domain $\Pi_T$
and let $dX = dxdtdyds$.

\begin{lemma}\label{lemma:MainLemmaTestFunc}
  Suppose $A' > 0$. Let $u_\Dx=u_\Dx(x,t)$ be defined by
  \eqref{eq:udxdef}, and let $u=u(y,s)$ be the classical solution of
  \eqref{MainProblem}.  Let $\rho \in C^\infty_0(\R)$ satisfy
  \begin{equation*}
    \mathrm{supp}(\rho) \subset [-1,1], \quad
    \rho(-\sigma)=\rho(\sigma),\quad \rho(\sigma) \ge 0, \quad 
    \int_\R \rho(\sigma) \, d\sigma = 1, 
  \end{equation*}
  and set
  \begin{equation*}
    \omega_r(x)=\frac1r \rho\left(\frac{x}{r}\right),\quad 
    \rho_\alpha(\xi)=\frac{1}{\alpha}\rho\left(\frac{\xi}{\alpha}\right), 
    \quad  \rho_{r_0}(t)=\frac{1}{r_0}\rho\left(\frac{t}{r_0}\right),
  \end{equation*}
  for positive (small) $r$, $\alpha$ and $r_0$.  Let $\nu$ and $\tau$
  be such that $0<\nu<\tau<T$ and define
  \begin{equation*}
    \psi^{\alpha}(t) := H_{\alpha}(t-\nu)-H_{\alpha}(t-\tau), \quad
    H_{\alpha}(t) = \int_{-\infty}^t \rho_{\alpha}(\xi)\, d\xi.  
  \end{equation*}
  Let
  \begin{equation*}%\label{eq:testdef}
    \test(x,t,y,s) = \psi^\alpha(t)\omega_r(x-y)\rho_{r_0}(t-s).
  \end{equation*}
  To ensure $\test_{|t = 0} \equiv 0$, $\test_{|s=0} \equiv 0$, we
  choose $\nu$ and $\tau$ such that $0 < r_0 < \min(\nu,T-\tau)$ and
  $0<\alpha<\min(\nu-r_0,T-\tau-r_0)$. Then
  \begin{equation}\label{MainIneq2}
    \begin{split}
      \intPi &\abs{u_\Dx-u}\rho_{\alpha}(t-\nu)\omega_r\rho_{r_0} \dX  \\
      &+\intPi \sgn{u_\Dx-u}\left(f(u_\Dx)-f(u)\right) \left(\Dp\test +\test_{y}\right)\dX \\
      &+\intPi \left(\int_{u_\Dx}^{S_\Dx u_{\Dx}} \sgn{z-u}F_2'(z)\,dz\right) \,\Dp\test \dX \\
      &+\intPi \abs{A(u_\Dx)-A(u)} \left(\Dm\Dp\test + (\Dp + \Dm)\test_{y} + \test_{yy}\right) \dX  \\
      &\qquad \ge \intPi
      \abs{u_\Dx-u}\rho_{\alpha}(t-\tau)\omega_r\rho_{r_0} \dX +
      \liminf_{\varepsilon \downto 0} \intPi \Ee_\Dx \test \dX,
    \end{split}
  \end{equation}
  where $\Ee_\Dx(x,t,y,s) = \Ee_j(t,y,s)$ for $x \in I_j$.
\end{lemma}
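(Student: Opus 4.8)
The plan is to begin from the pointwise (in the grid index $j$ and in $(y,s)$) inequality of Lemma~\ref{lemma:DoubelingLemma}, multiply it by the nonnegative test function $\test$, integrate over $\Pi_T^2$, and then integrate by parts so as to move every derivative---both the genuine derivatives $\partial_t,\partial_s,\partial_y$ and the difference quotients $\Dm,\Dp$---onto $\test$. The passage from the $j$-indexed quantities to the piecewise constant function $u_\Dx$ is immediate: for $x\in I_j$ one has $u_\Dx(x,t)=u_j(t)$ and $S_\Dx u_\Dx(x,t)=u_{j+1}(t)$, and, crucially, if $\Phi$ denotes the function equal to $Q^u(u_j,u_{j+1})$ on $I_j$ then $\Dm\Phi(x)=\Dm Q^u(u_j,u_{j+1})$ for $x\in I_j$. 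Hence the discrete operators become function-level difference quotients, to which the stated integration-by-parts and Leibniz rules apply, and all boundary contributions vanish by the support conditions imposed on $\test$.

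I would then process the three groups of terms separately. For the time/pseudo-time pair $\partial_t\psie(u_\Dx,u)+\partial_s\psie(u,u_\Dx)$, integrating by parts in $t$ and $s$ produces the factor $(\psi^\alpha)'(t)=\rho_\alpha(t-\nu)-\rho_\alpha(t-\tau)$ together with two terms carrying $\rho_{r_0}'(t-s)$, one from each of $\psie(u_\Dx,u)$ and $\psie(u,u_\Dx)$. For the convective group I would integrate $\partial_y\qe(u,u_j)$ by parts in $y$ and $\Dm Q^u(u_j,u_{j+1})$ by parts as a difference quotient, invoking the representation \eqref{eq:NumFluxPres} to split $Q^u$ into the part producing $f(u_\Dx)-f(u)$ and the residual $\int_{u_\Dx}^{S_\Dx u_\Dx}\psie'(z,u)F_2'(z)\dz$, so that $\partial_y\qe$ supplies the $\test_y$ contribution and $\Dm Q^u$ the $\Dp\test$ contributions. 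For the diffusive group I would move $\partial_y^2$, $\partial_y(\Dm+\Dp)$ and $\Dm\Dp$ off $\abseps{A(u_\Dx)-A(u)}$, obtaining precisely $\Dm\Dp\test+(\Dp+\Dm)\test_y+\test_{yy}$.

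Finally I would send $\eps\downto0$. One uses $\signe(\cdot)\to\sgn{\cdot}$ boundedly, so that, since $A'>0$ gives $\sgn{A(z)-A(c)}=\sgn{z-c}$, one has $\psie(u_\Dx,u),\,\psie(u,u_\Dx)\to\abs{u_\Dx-u}$, $\qe(u,u_\Dx)\to\sgn{u_\Dx-u}(f(u_\Dx)-f(u))$ and $\abseps{A(u_\Dx)-A(u)}\to\abs{A(u_\Dx)-A(u)}$; dominated convergence applies because $\test$ has compact support and the integrands are controlled by the uniform $L^1$, $L^\infty$ and $BV$ bounds of Lemma~\ref{SemiDiscProp} and the preliminary lemmas. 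The decisive point is the fate of the two $\rho_{r_0}'$ terms, whose sum is $\bigl(\psie(u,u_\Dx)-\psie(u_\Dx,u)\bigr)\psi^\alpha\omega_r\rho_{r_0}'$; since both entropies converge to the same limit $\abs{u_\Dx-u}$, the integrand tends to $0$ pointwise and, by domination, this combined term vanishes. This cancellation is exactly what permits $\eps$ to be sent to zero independently of $\Dx$; the limit of the $(\psi^\alpha)'$ factor then splits into the leading left-hand term $\abs{u_\Dx-u}\rho_\alpha(t-\nu)\omega_r\rho_{r_0}$ and the right-hand term with $\rho_\alpha(t-\tau)$. The one quantity I would not pass to the limit is $\intPi\Ee_\Dx\test$, whose integrand contains the singular factor $\psie''$; writing the integrated inequality as $\Phi(\eps)+\intPi\Ee_\Dx\test\le0$ with $\Phi(\eps)\to\Phi(0)$ and taking $\liminf_{\eps\downto0}$ gives $\Phi(0)+\liminf_{\eps\downto0}\intPi\Ee_\Dx\test\le0$, which upon rearranging $\Phi(0)$ is exactly \eqref{MainIneq2}. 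The main obstacle is precisely controlling this $\eps\to0$ passage---securing the symmetry cancellation and the dominating bounds---rather than the lengthy but mechanical integration-by-parts bookkeeping.
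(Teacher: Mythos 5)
Your proposal is correct and follows essentially the same route as the paper: start from Lemma~\ref{lemma:DoubelingLemma}, multiply by $\test$, integrate by parts (both ordinary and for difference quotients), split $Q^u$ via \eqref{eq:NumFluxPres}, pass to the limit $\eps\downto 0$ by dominated convergence in every term except $\intPi \Ee_\Dx\test\dX$, which is retained as a $\liminf$. Your observation that the two $\rho_{r_0}'$ contributions cancel in the limit because $\psie(u_\Dx,u)$ and $\psie(u,u_\Dx)$ share the limit $\abs{u_\Dx-u}$ is precisely the identity $\test_t+\test_s=(\rho_\alpha(t-\nu)-\rho_\alpha(t-\tau))\omega_r\rho_{r_0}$ used in the paper.
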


\begin{remark}
  Note that both
$$
\test_{x} + \test_{y} = 0 \quad \mbox{and} \quad \test_{xx} +
2\test_{xy} + \test_{yy} = 0.
$$
In equation \eqref{MainIneq2} these expressions appear with difference
quotients instead of $x$-derivatives. We expect that these equalities
turns into good approximations as long as $\Dx$ tends relatively fast
to zero compared to $r$.  We will show that this is the case in what
follows.
\end{remark}

\begin{proof}
  By Lemma~\ref{lemma:DoubelingLemma} it follows that
  \begin{align*}
    & \partial_t\psie(u_\Dx,u) + \partial_s\psie(u,u_\Dx) + \partial_y
    \qe(u,u_\Dx) + \Dm Q^u(u_\Dx,S_\Dx u_\Dx) \\ & \qquad
    -( \partial_y^2 +\partial_y(\Dm +\Dp) +
    \Dm\Dp)\abseps{A(u_\Dx)-A(u)} \le -\Ee_\Dx,
  \end{align*}
  for all $(x,t,y,s) \in \Pi_T^2$. Let us multiply with $\test$ and
  integrate over $\Pi_T^2$. Using both ordinary integration by parts
  and integration by parts for difference quotients, we obtain
  \begin{align*}
    & \intPi \psie(u_\Dx,u)\test_t + \psie(u,u_\Dx)\test_s \dX \\
    & \qquad
    +\intPi  \qe(u,u_\Dx)\test_y +  Q^u(u_\Dx,S_\Dx u_\Dx)\Dp\test \dX \\
    & \qquad
    +\intPi \abseps{A(u_\Dx)-A(u)}(\test_{yy}  +(\Dm +\Dp)\test_y + \Dm\Dp\test) \dX \\
    & \qquad \qquad \ge \intPi \Ee_\Dx \test \dX.
  \end{align*}
We want to take the limit as $\varepsilon \downto 0$.  Consider the 
first term on the left. By the dominated 
convergence theorem, for any $a,b \in \R$,
$$
\lim_{\varepsilon \downto 0} \psie(a,b) = \lim_{\varepsilon \downto 0}
\int_{b}^a \signe(A(z)-A(b))\dz = \abs{a - b},
$$
since $A' > 0$. It follows that
\begin{displaymath}
  \lim_{\varepsilon \downto 0} \psie(u_\Dx,u) = \lim_{\varepsilon \downto 0}\psie(u,u_\Dx) = |u_\Dx-u|.
\end{displaymath}
Furthermore,
$$
(\test_t + \test_s)(x,t,y,s) =
(\rho_\alpha(t-\nu)-\rho_{\alpha}(t-\tau))\omega_r(x-y)\rho_{r_0}(t-s),
$$
so by the dominated convergence theorem
\begin{multline*}
  \lim_{\varepsilon \downto 0} \intPi \psie(u_\Dx,u)\test_t +
  \psie(u,u_\Dx)\test_s\dX
  \\
  = \intPi \abs{u_\Dx-u}\rho_{\alpha}(t-\nu)\omega_r\rho_{r_0} \dX -
  \intPi \abs{u_\Dx-u}\rho_{\alpha}(t-\tau) \omega_r\rho_{r_0}\dX.
\end{multline*}
Consider the second term on the left. By \eqref{eq:NumFluxPres} we obtain
$$
Q^u(u_\Dx,S_\Dx u_\Dx) = \qe(u_\Dx,u) + \int_{u_\Dx}^{S_\Dx u_\Dx}
\signe(A(z)-A(u))F_2'(z)\dz.
$$
Since $A' > 0$,
\begin{align*}
  \lim_{\varepsilon \downto 0} \qe(u_\Dx,u)
  &= \lim_{\varepsilon \downto 0} \int^{u_\Dx}_u \signe(A(z)-A(u_\Dx))f'(z)\dz \\
  &= \sgn{u_\Dx-u}(f(u_\Dx)-f(u)).
\end{align*}
It follows that
\begin{align*}
  \lim_{\varepsilon \downto 0}Q^u(u_\Dx,S_\Dx u_\Dx) & =
  \sgn{u_\Dx-u}(f(u_\Dx)-f(u)) \\ & \qquad + \int_{u_\Dx}^{S_\Dx
    u_\Dx} \sgn{z-u}F_2'(z)\dz.
\end{align*}
As above
\begin{align*}
  \lim_{\varepsilon \downto 0} \qe(u,u_\Dx) &= \sgn{u-u_\Dx}(f(u)-f(u_\Dx)) \\
  &= \sgn{u_\Dx-u}(f(u_\Dx)-f(u)).
\end{align*}
Hence, again by the dominated convergence theorem,
\begin{align*}
  & \lim_{\varepsilon \downto 0}\intPi \qe(u,u_\Dx)\test_y
  +  Q^u(u_\Dx,S_\Dx u_\Dx)\Dp\test \dX \\
  & \qquad
  = \intPi  \sgn{u_\Dx-u}(f(u_\Dx)-f(u))(\test_y + \Dp\test)\dX  \\
  & \qquad\qquad + \intPi \left(\int_{u_\Dx}^{S_\Dx u_{\Dx}}
    \sgn{z-u}F_2'(z)\,dz \right) \Dp \test \dX.
\end{align*}
\end{proof}
\begin{lemma}\label{lemma:FatousApplicationToDissip}
  Let $\Ee_\Dx$ and $\test$ be defined in
  Lemma~\ref{lemma:MainLemmaTestFunc}. Then
$$
\liminf_{\varepsilon \downto 0} \intPi \Ee_\Dx \test \dX \ge
\int_{\Pi_T}\liminf_{\varepsilon \downto 0} \left(\int_{\Pi_T} \Ee_\Dx
  \test \dyds \right)\dxdt.
$$
\end{lemma}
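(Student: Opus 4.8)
The plan is to obtain the claim from Fatou's lemma. Fix $\Dx$. For each $\eps>0$ the integrand $\Ee_\Dx\test$ is integrable on $\Pi_T^2$ (as in Lemma~\ref{lemma:MainLemmaTestFunc}), so Fubini's theorem lets me write $\intPi\Ee_\Dx\test\dX = \int_{\Pi_T}g_\eps\dxdt$ with $g_\eps(x,t):=\int_{\Pi_T}\Ee_\Dx\test\dyds$. The asserted inequality is then precisely $\liminf_{\eps\downto0}\int_{\Pi_T}g_\eps\dxdt \ge \int_{\Pi_T}\liminf_{\eps\downto0}g_\eps\dxdt$, which is Fatou's lemma provided I exhibit a single $h\in L^1(\Pi_T)$, independent of $\eps$, with $g_\eps\ge h$ for every small $\eps$.

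To build $h$ I dissect $\Ee_j=\Ee[u](u_{j-1},u_j,u_{j+1})$ from Lemma~\ref{lemma:DoubelingLemma} into its five summands. Because $\psie''(z,c)=\signe'(A(z)-A(c))A'(z)\ge0$ (the function $\signe$ is nondecreasing and $A'>0$) and $A$ is increasing, the first two summands are nonnegative: in, e.g., $\frac{1}{(\Dx)^2}\int^{u_j}_{u_{j+1}}\psie''(z,u)(A(z)-A(u_{j+1}))\dz$ the factor $A(z)-A(u_{j+1})$ carries the sign of the orientation $u_j-u_{j+1}$, so the integral is $\ge0$. The fifth summand is nonnegative as well, since $\partial_y\psie'(u,u_j)=\signe'(A(u)-A(u_j))\partial_y A(u)$ gives $\partial_y\psie'(u,u_j)\,\partial_y A(u)=\signe'(A(u)-A(u_j))(\partial_y A(u))^2\ge0$. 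Hence, using $\test\ge0$, only the two mixed summands $T_3,T_4$ can make $g_\eps$ negative, and $g_\eps\ge-\int_{\Pi_T}(|T_3|+|T_4|)\test\dyds$.

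The heart of the matter is to bound $|T_3|+|T_4|$ by an $\eps$-uniform, integrable quantity. The key observation is that although $\psie''$ is singular as $\eps\downto0$, its antiderivative survives: $\int_{u_{j-1}}^{u_j}\psie''(z,u)\dz=\signe(A(u_j)-A(u))-\signe(A(u_{j-1})-A(u))$, which is at most $2$ in absolute value uniformly in $\eps$ because $|\signe|\le1$. Thus $|T_3|\le\frac{2}{\Dx}|\partial_y A(u)|$, and likewise for $T_4$. Two integrability mechanisms then combine: first, $\int_\R|\partial_y A(u)|\,dy=|A(u(\cdot,s))|_{BV}\le\|A'\|_{L^\infty}|u^0|_{BV}$ by Lemma~\ref{lem:1.1}, providing decay in the spatial variable; second, writing the $x$-integral as a sum over cells, $\int_\R(\cdot)\,\omega_r(x-y)\,dx=\sum_j(\cdot)\,w_j$ with convex weights $w_j=\int_{I_j}\omega_r(x-y)\,dx\ge0$, $\sum_j w_j=1$, so any cell-indexed quantity bounded by $2$ integrates against $\omega_r$ to at most $2$. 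Applying both to $\sup_{\eps}(|T_3|+|T_4|)$ shows that $h(x,t):=-\int_{\Pi_T}\sup_\eps(|T_3|+|T_4|)\test\dyds$ lies in $L^1(\Pi_T)$, with $\eps$-independent norm controlled by $\tfrac{C}{\Dx}$, and $g_\eps\ge h$ for every $\eps$. Fatou's lemma then yields the inequality.

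I expect the main obstacle to be exactly this $\eps$-uniform domination of the mixed terms. The naive pointwise bound $|\partial_y A(u)|\le\|\partial_y A(u)\|_{L^\infty}$ is fatal, since it destroys the spatial integrability: the resulting bound is constant in $x$ and hence not in $L^1(\Pi_T)$. The correct route is to retain $|\partial_y A(u)|$ and integrate it in $y$ via the $BV$ bound of Lemma~\ref{lem:1.1}, while simultaneously exploiting the convex-weight identity $\sum_j w_j=1$ to absorb the difference quotient. Keeping the antiderivative of $\psie''$ rather than $\psie''$ itself is what makes every estimate independent of $\eps$, so that a single dominating $h$ exists and Fatou applies.
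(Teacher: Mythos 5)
Your proof is correct and follows the same skeleton as the paper's: both reduce to Fatou's lemma after observing that the first two summands of $\Ee_j$ and the term $\partial_y\psie'(u,u_j)\,\partial_y A(u)=\signe'(A(u)-A(u_j))(\partial_y A(u))^2$ are nonnegative, so that only the two mixed summands require an $\eps$-uniform integrable lower bound. The difference lies in how that bound is produced. The paper invokes Lemma~\ref{lemma:MixedTerm} to rewrite the sum of the mixed terms as $-\partial_y(\Dm+\Dp)\abseps{A(u_\Dx)-A(u)}$, integrates by parts in $y$ to move the derivative onto $\test$, and then uses $\abs{(\Dm+\Dp)\abseps{A(u_\Dx)-A(u)}}\le\abs{\Dm A(u_\Dx)}+\abs{\Dp A(u_\Dx)}$; its dominating function is $\bigl(\abs{\Dm A(u_\Dx)}+\abs{\Dp A(u_\Dx)}\bigr)\int_{\Pi_T}\abs{\test_y}\dyds$, integrable by the discrete $BV$ bound of Lemma~\ref{SemiDiscProp}. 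You instead bound the antiderivative directly, $\abs{\int_{u_{j-1}}^{u_j}\psie''(z,u)\dz}=\abs{\signe(A(u_j)-A(u))-\signe(A(u_{j-1})-A(u))}\le 2$, obtaining $\abs{T_3}+\abs{T_4}\le\tfrac{4}{\Dx}\abs{\partial_y A(u)}$ and hence an $L^1$ dominating function via the $BV$ bound on the exact (regularized) solution from Lemma~\ref{lem:1.1}. Your route is more elementary --- no integration by parts and no appeal to Lemma~\ref{lemma:MixedTerm} --- at the price of a dominating function of size $\order{1/\Dx}$ rather than $\order{1/r}$, which is entirely harmless here since $\Dx$ is fixed and only $\eps$-independence and integrability matter. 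Your closing diagnosis is also exactly right: the na\"{\i}ve $L^\infty$ bound on $\partial_y A(u)$ would destroy integrability in $y$, and the $BV$-in-space, unit-mass-of-mollifier mechanism is what saves the argument in either version.
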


\begin{proof}
Let
$$
f_\varepsilon(x,t) := \int_{\Pi_T}\Ee_\Dx \test\dyds
$$
and
$$
h_\varepsilon(x,t) := \int_{\Pi_T} \partial_y(\Dm +
\Dp)\abseps{A(u_\Dx)-A(u)}\test \dyds.
$$
Recall that $\Ee_\Dx(x,t,y,s) = \Ee_j(t,y,s)$ for $x \in I_j$, where
$\Ee_j$ is defined in Lemma~\ref{lemma:DoubelingLemma}. Note that
\begin{align*}
  \Ee_j \geq& -
  \frac{1}{\Dx}\int_{u_{j-1}}^{u_j}\psie''(z,u)\dz\, \partial_yA(u)
  \\ 
  & -
  \frac{1}{\Dx}\int_{u_j}^{u_{j+1}}\psie''(z,u)\dz\, \partial_yA(u),
\end{align*}
so by Lemma~\ref{lemma:MixedTerm} it follows 
that $f_\varepsilon \ge h_\varepsilon$. 
Using integration by parts and the triangle inequality 
we obtain the bound
\begin{equation*}
  \abs{h_\varepsilon} \le \left(\abs{\Dm A(u_\Dx)} + \abs{\Dp
      A(u_\Dx)}\right) \left(\int_{\Pi_T} \abs{\test_y}\dyds\right) =:h.
\end{equation*}
It follows by Lemma~\ref{SemiDiscProp} that $h$ is an integrable
nonnegative function such that $-h \le f_\varepsilon$.  By Fatou's
lemma we obtain
\begin{equation*}
  \liminf_{\varepsilon \downto 0}\int_{\Pi_T}f_\varepsilon \dxdt \ge
  \int_{\Pi_T}\liminf_{\varepsilon \downto 0}f_\varepsilon \dxdt.
\end{equation*}
\end{proof}

Note that as $\varepsilon \downto 0$ the terms in $\Ee_j$ concentrate
on the domains specified by $u \in \interval(u_j,u_{j+1})$, $u \in
\interval(u_{j-1},u_j)$, or $u = u_j$. In order to analyze this limit
we will need the following elementary lemma:

\begin{lemma}\label{DifQuotChainRule}
  Let $\seq{u_j}_{j\in \Z}$ be some sequence in $\mathbb{R}$ and let
  $A:\R \rightarrow \R$ a strictly increasing continuously
  differentiable function. For any $u\in\R$ there exist sequences
  $\seq{\tau^\pm_j}_{j \in \Z},\seq{\theta^\pm_j}_{j \in \Z}$ such
  that for each $j \in \Z$ both $\tau^\pm_j$ and $\theta^\pm_j$ are in
  $\interval(u_j,u_{j \pm 1})$ and
  \begin{align*}
    D_\pm \signe(A(u_j)-A(u)) &= \signe'(A(\tau^\pm_j)-A(u))D_\pm A(u_j), \\
    D_\pm \abseps{A(u_j)-A(u)} &= \signe(A(\theta^\pm_j)-A(u))D_\pm
    A(u_j).
  \end{align*}
  If $u$ is a differentiable function of $y$ then for each $j \in \Z$,
  \begin{equation}\label{chainRuleTrick}
    \signe'(A(\tau^\pm_j)-A(u))A(u)_{y} = -(\signe(A(\theta^\pm_j)-A(u)))_{y}.
  \end{equation}
  Both $\seq{\tau^\pm_j}_{j \in \Z}$ and $\seq{\theta^\pm_j}_{j \in
    \Z}$ depend on $u$ and $\varepsilon$.
\end{lemma}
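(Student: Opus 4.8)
The plan is to read the two difference-quotient identities as applications of the mean value theorem, using that $A$ is a strictly increasing $C^1$ bijection onto its range, and then to obtain \eqref{chainRuleTrick} by differentiating the \emph{implicit} relation that defines $\theta^\pm_j$, so as never to differentiate $\theta^\pm_j$ directly.

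Spelling out $\Dp$ and $\Dm$, the first identity is equivalent to
$$
\signe(A(u_{j\pm1})-A(u)) - \signe(A(u_j)-A(u)) = \signe'(A(\tau^\pm_j)-A(u))\bigl(A(u_{j\pm1})-A(u_j)\bigr).
$$
I would introduce the $C^1$ function $\phi(w):=\signe(w-A(u))$, whose derivative is $\phi'(w)=\signe'(w-A(u))$, and apply the mean value theorem to $\phi$ on the interval with endpoints $A(u_j)$ and $A(u_{j\pm1})$. This produces a value strictly between $A(u_j)$ and $A(u_{j\pm1})$; since $A$ is a continuous strictly increasing bijection onto its image, that value equals $A(\tau^\pm_j)$ for a unique $\tau^\pm_j\in\interval(u_j,u_{j\pm1})$, and dividing by $\pm\Dx$ gives the first identity. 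Replacing $\phi$ by $\Psi(w):=\abseps{w-A(u)}$, for which $\Psi'(w)=\signe(w-A(u))$, the same argument yields the points $\theta^\pm_j$ and the second identity. (If $u_{j\pm1}=u_j$ both sides vanish and one sets $\tau^\pm_j=\theta^\pm_j=u_j$.)

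The relation \eqref{chainRuleTrick} is the only step requiring a real idea, because $\tau^\pm_j$ and $\theta^\pm_j$ are \emph{different} intermediate points, both depending on $y$ through $u=u(y)$, so a direct differentiation of $\signe(A(\theta^\pm_j)-A(u))$ would generate an unwanted $\partial_y\theta^\pm_j$ term. To avoid this, fix $j$, treat the $+$ case, and set
$$
G(u) := \abseps{A(u_{j+1})-A(u)} - \abseps{A(u_j)-A(u)},
$$
so that the second identity reads $\signe(A(\thetap_j)-A(u)) = G(u)/\bigl(A(u_{j+1})-A(u_j)\bigr)$. Differentiating $G$ directly via $\tfrac{d}{dw}\abseps{w}=\signe(w)$, and then substituting the first identity, I get
$$
G'(u) = -A'(u)\bigl[\signe(A(u_{j+1})-A(u))-\signe(A(u_j)-A(u))\bigr] = -A'(u)\bigl(A(u_{j+1})-A(u_j)\bigr)\signe'(A(\taup_j)-A(u)).
$$
Now with $u=u(y)$, differentiating the defining relation for $\thetap_j$ in $y$ (the denominator $A(u_{j+1})-A(u_j)$ being a constant) yields
$$
\bigl(\signe(A(\thetap_j)-A(u))\bigr)_{y} = \frac{G'(u)\,u_{y}}{A(u_{j+1})-A(u_j)} = -\signe'(A(\taup_j)-A(u))\,A'(u)u_{y} = -\signe'(A(\taup_j)-A(u))\,A(u)_{y},
$$
which is exactly \eqref{chainRuleTrick}; the $-$ case is identical. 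The point — and the crux of the argument — is that the entire $y$-dependence of $\thetap_j$ is hidden inside $G$, so differentiating the implicit relation bypasses it and automatically reproduces the $\taup_j$-factor coming from the first identity.
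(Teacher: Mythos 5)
Your proposal is correct and follows essentially the same route as the paper: the two identities come from the mean value theorem applied in the variable $w=A(z)$, and \eqref{chainRuleTrick} is obtained by differentiating the difference-quotient relation defining $\theta^\pm_j$ (your $G$) in $y$ and substituting the first identity, rather than differentiating $\theta^\pm_j$ itself — which is exactly the paper's chain of equalities, divided at the end by $\Dp A(u_j)\neq 0$. The degenerate case $u_{j\pm1}=u_j$ is handled the same way in both arguments, via the ordinary chain rule with $\tau^\pm_j=\theta^\pm_j=u_j$ constant.
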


\begin{proof}
  The first statement is a direct consequence of the mean value
  theorem.  Consider \eqref{chainRuleTrick}. First note that $\taum_j
  = \taup_{j-1}$ and $\thetam_j = \thetap_{j-1}$, so it suffices to
  consider $\taup_j$ and $\thetap_j$. If $u_j = u_{j+1}$ then
  $\theta_j = \tau_j$ is independent of $u$ and hence of $y$, so
  \eqref{chainRuleTrick} follows by the chain rule. In general,
  \begin{equation*}
    \begin{split}
      \signe'(A(\tau_j)-A(u))A(u)_{y}\Dp A(u_j)
      &= \Dp\signe(A(u_j)-A(u))A(u)_y \\
      &= -\Dp(\abseps{A(u_j)-A(u)})_y  \\
      &= -\signe(A(\theta_j)-A(u))_{y}\Dp A(u_j).
    \end{split}
  \end{equation*}
  In the case $u_j \neq u_{j+1}$ we have $\Dp A(u_j) \neq 0$ and
  \eqref{chainRuleTrick} follows.
\end{proof}

The following result is concerned with the pointwise limit of
$\signe(A(\theta^\pm_j)-A(u))$ as $\varepsilon \downto 0$.  The
explicit formula for this limit, which will be used later, shows that
the limit is in fact a Lipschitz continuous function in the case that
$A(u_j) \neq A(u_{j \pm 1})$.
\begin{lemma}\label{lemma:LimitOfSigne}
  Let
  \begin{equation*}
    \sg{(a,b)}{\sigma} :=
    \begin{cases} 
      \frac{|a-\sigma|-|b-\sigma|}{b-a} &\text{if $a \ne b$,}\\
      \sign(a-\sigma) & \text{if $a=b$, $\sigma \ne b$,}\\
      0 & \text{if $a=b=\sigma$},
    \end{cases}
  \end{equation*}
  for any real numbers $a$ and $b$. Under the same assumptions as in
  Lemma~\ref{DifQuotChainRule},
  \begin{equation*}
    \lim_{\eps\downto 0}\signe(A(\theta^\pm_j)-A(u)) 
    = -\sg{(A(u_j),A(u_{j \pm 1}))}{A(u)}.
  \end{equation*}
  Furthermore, if $a \neq b$ then
  \begin{equation*}
    \sg{(a,b)}{\sigma} = 
    \begin{cases}
      -1 & \text{if  $\sigma \le \min \seq{a,b}$,}\\
      \frac{2}{|a-b|}\left(\sigma-\frac{1}{2}(b+a)\right)
      & \text{if $\sigma \in \interval(a,b)$}, \\
      1& \text{if $\sigma\ge \max \seq{a,b}$.}
    \end{cases}
  \end{equation*}
\end{lemma}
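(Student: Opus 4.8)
The plan is to solve explicitly for $\signe(A(\thetap_j)-A(u))$ from the identities of Lemma~\ref{DifQuotChainRule} and then let $\eps\downto 0$. Since $A$ is strictly increasing, $A(u_j)\neq A(u_{j+1})$ is equivalent to $u_j\neq u_{j+1}$, and in that case $\Dp A(u_j)\neq 0$; dividing
\[
\Dp\abseps{A(u_j)-A(u)}=\signe(A(\thetap_j)-A(u))\,\Dp A(u_j)
\]
by $\Dp A(u_j)$ gives
\[
\signe(A(\thetap_j)-A(u))=\frac{\abseps{A(u_{j+1})-A(u)}-\abseps{A(u_j)-A(u)}}{A(u_{j+1})-A(u_j)}.
\]
By the relation $\thetam_j=\thetap_{j-1}$ recorded in the proof of Lemma~\ref{DifQuotChainRule} it suffices to treat the ``$+$'' case. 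Passing to the limit, dominated convergence gives $\lim_{\eps\downto 0}\abseps{w}=\abs{w}$ (exactly as in the identity $\lim_{\eps\downto 0}\psie(a,b)=\abs{a-b}$ used earlier, since $\signe\to\sign$ pointwise with $\abs{\signe}\le 1$), so the quotient converges to $\frac{\abs{A(u_{j+1})-A(u)}-\abs{A(u_j)-A(u)}}{A(u_{j+1})-A(u_j)}$, which is precisely $-\sg{(A(u_j),A(u_{j+1}))}{A(u)}$ by the $a\neq b$ branch of the definition (taking $a=A(u_j)$, $b=A(u_{j+1})$, $\sigma=A(u)$).

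The degenerate case $u_j=u_{j+1}$, i.e. $a=b$, must be handled separately, since then $\Dp A(u_j)=0$ and the quotient above is unavailable. Here $\interval(u_j,u_{j+1})$ collapses to the single point $u_j$, forcing $\thetap_j=u_j$, so that $\signe(A(\thetap_j)-A(u))=\signe(A(u_j)-A(u))$, whose limit as $\eps\downto 0$ is governed solely by the sign of $A(u_j)-A(u)$; via strict monotonicity of $A$ this reproduces the two degenerate branches of the definition of $\sg{(a,b)}{\sigma}$. I expect this branch to be the main obstacle: one must keep the overall sign consistent across the degeneration $b\to a$ and fix the value on the exceptional set where $A(u)=A(u_j)=A(u_{j+1})$ (the third case), whereas the limit passage itself is routine.

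Finally, the closed piecewise formula for $a\neq b$ follows by resolving $\frac{\abs{a-\sigma}-\abs{b-\sigma}}{b-a}$ in the three regions: for $\sigma\le\min\seq{a,b}$ and $\sigma\ge\max\seq{a,b}$ both absolute values open with the same sign and the quotient collapses to $-1$ and $+1$ respectively, while for $\sigma\in\interval(a,b)$ one obtains $\frac{2\sigma-a-b}{b-a}=\frac{2}{\abs{a-b}}\bigl(\sigma-\tfrac12(a+b)\bigr)$ for either ordering of $a$ and $b$. This representation also makes transparent the Lipschitz continuity in $\sigma$ (with slope $\tfrac{2}{\abs{a-b}}$) noted just before the lemma.
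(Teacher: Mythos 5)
Your proposal is correct and follows essentially the same route as the paper's proof: solve the defining identity $\Dp\abseps{A(u_j)-A(u)}=\signe(A(\thetap_j)-A(u))\,\Dp A(u_j)$ for $\signe(A(\thetap_j)-A(u))$ when $\Dp A(u_j)\neq 0$ and pass to the limit, treat the collapsed case $u_j=u_{j+1}$ separately via $\thetap_j=u_j$, and obtain the closed formula by opening the absolute values region by region (the paper exploits the symmetry in $a,b$ to assume $a<b$, which is a cosmetic difference). Your instinct that the degenerate branch is the delicate spot is well placed: the value $\sgn{A(u_j)-A(u)}$ one actually obtains there matches $-\sg{(a,a)}{\sigma}$ only if the second branch of the definition is read as $\sign(\sigma-a)$, a sign bookkeeping point that the paper's own proof passes over just as quickly as you do.
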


\begin{proof}
  To prove the first statement we consider the case of
  $\thetap_j$. The same argument applies to $\thetam_j$.  Recall the
  definition of $\thetap_j$:
  \begin{equation*}
    \signe\left(A(\thetap_j)-A(u)\right)
    \left(A(u_{j+1})-A(u_j)\right) 
    = \abseps{A(u_{j+1})-A(u)}-\abseps{A(u_j)-A(u)}.
  \end{equation*}
  If $u_{j+1}=u_j$, then $\thetap_j=u_j$ for all $u$ and $\eps$, since
  $\thetap_j\in\interval(u_j,u_{j+1})$. Thus in this case
  \begin{equation*}
    \lim_{\eps\downto 0}\signe\left(A(\thetap_j)-A(u)\right)
    =
    \begin{cases}
      0 &\text{if $u=u_j$,}\\
      \sgn{A(u_j)-A(u)} &\text{otherwise.}
    \end{cases}
  \end{equation*}
  Now assume that $\Dp A(u_j)\ne 0$. Then
$$
\signe(A(\thetap_j)-A(u))
=\frac{\abseps{A(u_{j+1})-A(u)}-\abseps{A(u_{j})-A(u)}}{A(u_{j+1})-A(u_j)},
$$
and the result follows by letting $\eps\downto 0$.  Let us prove the
second statement. First observe that all expressions are symmetric in
$a$ and $b$, so we may assume that $a < b$. Under this assumption we
have
\begin{align*}
  (b-a)\sg{(a,b)}{\sigma} &= \abs{a-\sigma}-\abs{b-\sigma} \\
  &=\sgn{a-\sigma}(a-\sigma) - \sgn{b-\sigma}(b-\sigma) \\ & =
  \begin{cases}
    \sgn{b-\sigma}(a-b) &\text{if $\sigma \not \in (a,b)$,}\\
    2\sigma-(b+a) &\text{if $\sigma\in (a,b)$.}
  \end{cases}
\end{align*}
Dividing by $(b-a)$ concludes the proof.
\end{proof}

\begin{lemma}\label{lemma:LowerBoundSquareTerm}
  Let $\Ee_\Dx$ and $\test$ be defined in
  Lemma~\ref{lemma:MainLemmaTestFunc}.  For each $(x,t) \in \Pi_T$,
  \begin{align*}
    &\liminf_{\varepsilon \downto 0} \int_{\Pi_T} \Ee_\Dx\test\dyds \\
    & \quad \ge \int_{\Pi_T} \Dm\Biggl(\Dp \sign(A(u_\Dx)-A(u)) \\ &
    \qquad\qquad\qquad\qquad\quad\quad \times
    \left[\frac{1}{2}(A(u_\Dx)+A(S_\Dx
      u_\Dx))-A(u)\right]\Biggr)\test\dyds \\ &\quad \qquad +
    \liminf_{\varepsilon \downto 0} \frac{1}{2}\int_{\Pi_T}
    \left(\zetae(u_\Dx,\taum_\Dx,u) + \zetae(u_\Dx,\taup_\Dx,u)
    \right)(A(u)_y)^2 \test \dyds,
  \end{align*}
  where
$$
\zetae(a,b,c) := \signe'(A(a)-A(c))-\signe'(A(b)-A(c)), \qquad \forall
a,b,c\in \R.
$$
\end{lemma}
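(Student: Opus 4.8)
The plan is to treat the five terms constituting $\Ee_j$ (from Lemma~\ref{lemma:DoubelingLemma}) one interface at a time. I would split $\Ee_j$ into a ``$+$'' contribution, consisting of the $\frac{1}{(\Dx)^2}$ integral over $(u_j,u_{j+1})$ together with the mixed term $-\frac{1}{\Dx}\int_{u_j}^{u_{j+1}}\psie''(z,u)\dz\,A(u)_y$, and a symmetric ``$-$'' contribution over $(u_{j-1},u_j)$; the remaining term $\partial_y\psie'(u,u_j)A(u)_y$ I would split evenly between the two sides. Since $\psie'(u,u_j)=\signe(A(u)-A(u_j))$ and $\signe'$ is even, this last term equals $\signe'(A(u_j)-A(u))(A(u)_y)^2$, which is precisely the quantity to be paired against the $\signe'(A(\taup_j)-A(u))$ and $\signe'(A(\taum_j)-A(u))$ factors produced on each side.

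On the ``$+$'' side I would first integrate the $\frac{1}{(\Dx)^2}$ term by parts in $z$, using $\partial_z\psie'(z,u)=\psie''(z,u)$ and $\partial_z\abseps{A(z)-A(u)}=\signe(A(z)-A(u))A'(z)$, so as to rewrite it \emph{without} $\psie''$ as a clean combination of $\signe(A(u_j)-A(u))$ and $\abseps{A(u_{j\pm1})-A(u)}$. Next, by the mean value theorem of Lemma~\ref{DifQuotChainRule}, $\frac{1}{\Dx}\int_{u_j}^{u_{j+1}}\psie''(z,u)\dz=\signe'(A(\taup_j)-A(u))\Dp A(u_j)=\Dp\signe(A(u_j)-A(u))$, so with $c:=\signe'(A(\taup_j)-A(u))\ge0$ the mixed term becomes $-c\,\Dp A(u_j)\,A(u)_y$. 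The key algebraic step is the completion of the square
\begin{equation*}
  -c\,\Dp A(u_j)\,A(u)_y
  = -\tfrac{c}{2}(\Dp A(u_j))^2 -\tfrac{c}{2}(A(u)_y)^2
  +\tfrac{c}{2}\bigl(\Dp A(u_j)-A(u)_y\bigr)^2 .
\end{equation*}
Here $-\tfrac{c}{2}(A(u)_y)^2=-\tfrac12\signe'(A(\taup_j)-A(u))(A(u)_y)^2$ combines with half of $\signe'(A(u_j)-A(u))(A(u)_y)^2$ to give exactly $\tfrac12\zetae(u_j,\taup_j,u)(A(u)_y)^2$; the term $\tfrac{c}{2}(\Dp A(u_j)-A(u)_y)^2\ge0$ is discarded for the lower bound; and $-\tfrac{c}{2}(\Dp A(u_j))^2=-\tfrac12\Dp\signe(A(u_j)-A(u))\,\Dp A(u_j)$ is merged with the integrated $\frac{1}{(\Dx)^2}$ term.

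It remains to identify the limit of this merged first-order piece, and here the merge is essential: taken separately, each part is of order $1/(\Dx)^2$, but their sum telescopes. Taking $\eps\downto0$ by dominated convergence (using $\signe\to\sign$ and $\abseps{\cdot}\to\abs{\cdot}$ pointwise together with the uniform bounds of Lemma~\ref{SemiDiscProp}), the ``$+$'' piece converges to $\frac1\Dx\Dp\sign(A(u_j)-A(u))\bigl[\tfrac12(A(u_j)+A(u_{j+1}))-A(u)\bigr]$, which one verifies by the elementary identity $\abs{A(u_k)-A(u)}=\sign(A(u_k)-A(u))(A(u_k)-A(u))$; the symmetric ``$-$'' piece converges to the analogous expression at the interface $j-\tfrac12$ with a minus sign, and the two sum to $\Dm\bigl(\Dp\sign(A(u_\Dx)-A(u))[\tfrac12(A(u_\Dx)+A(S_\Dx u_\Dx))-A(u)]\bigr)$. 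Integrating against $\test$ over $\Pi_T$ in $(y,s)$, using $\liminf_\eps(a_\eps+b_\eps)\ge\lim_\eps a_\eps+\liminf_\eps b_\eps$ for the convergent first-order part, and passing the $\liminf$ inside as in Lemma~\ref{lemma:FatousApplicationToDissip}, yields the claimed bound with the $\zetae$ square kept under $\liminf$. The main obstacle is carrying out the square-completion and the first-order telescoping \emph{simultaneously} and with the correct pairing, so that the dangerous $1/(\Dx)^2$ contributions cancel and leave a difference of $\sign$'s whose $\eps$-limit is harmless; the nonnegative square $\tfrac{c}{2}(\Dp A(u_j)-A(u)_y)^2$ is what we can afford to throw away, mirroring the $(\abs{\partial_yA(u)}-\abs{\partial_xA(v)})^2$ term in the continuous identity \eqref{eq:uniq-tmp}.
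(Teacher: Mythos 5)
Your proposal is correct and follows essentially the same route as the paper: the same split of $\Ee_j$ into $\pm$ halves with the $\partial_y\psie'(u,u_j)A(u)_y$ term shared evenly, the same completion of the square whose nonnegative part $\tfrac{c}{2}(D_\pm A(u_j)-A(u)_y)^2$ is discarded, and the same identification of the surviving telescoping piece (the paper's $T_i^\varepsilon-K_i^\varepsilon$) with the $\Dm\bigl(\Dp\sign(A(u_\Dx)-A(u))[\tfrac12(A(u_\Dx)+A(S_\Dx u_\Dx))-A(u)]\bigr)$ term as $\eps\downto 0$. The only difference is bookkeeping: the paper extracts $K_i^\varepsilon$ from the $1/(\Dx)^2$ integral and keeps the $\zetae(z,\tau^\pm_j,u)$ remainder explicit, whereas you generate $-K_i^\varepsilon$ from the mixed term and merge it directly, which is the same algebra.
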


\begin{proof} 
  We split the proof into two claims.\\
  \emph{Claim 1.}
  \begin{align*}
    \Ee_j &\ge \frac{1}{2(\Dx)^2}\int^{u_j}_{u_{j+1}}
    \zetae(z,\taup_j,u)\partial_z(A(z)-A(u_{j+1}))^2\dz \\
    & \qquad + \frac{1}{2(\Dx)^2}\int^{u_j}_{u_{j-1}}
    \zetae(z,\taum_j,u)\partial_z(A(z)-A(u_{j-1}))^2\dz \\
    & \qquad + \frac{1}{2}\left(\zetae(u_j,\taum_j,u) +
      \zetae(u_j,\taup_j,u)\right)(A(u)_y)^2.
  \end{align*}
  \emph{Proof of Claim 1.}  Let
  \begin{align*}
    T^-  & :=    \frac{1}{(\Dx)^2}\int^{u_j}_{u_{j-1}}
    \psie''(z,u)(A(z)-A(u_{j-1}))\dz \\ 
    & \qquad - \frac{1}{\Dx}\int_{u_{j-1}}^{u_j}\psie''(z,u)\dz A(u)_y
    + \frac{1}{2}\partial_y\psie'(u,u_j)A(u)_y.
  \end{align*}
  We start by rewriting the first term as follows:
  \begin{align*}
    \frac{1}{(\Dx)^2}&\int^{u_j}_{u_{j-1}}
    \psie''(z,u)(A(z)-A(u_{j-1}))\dz \\
    & =  \frac{1}{2(\Dx)^2}\int^{u_j}_{u_{j-1}}
    \signe'(A(z)-A(u))\partial_z(A(z)-A(u_{j-1}))^2\dz \\ 
    & =  \frac{1}{2(\Dx)^2}\int^{u_j}_{u_{j-1}}
    \signe'(A(\taum_j)-A(u))\partial_z(A(z)-A(u_{j-1}))^2\dz \\ 
    & \qquad +  \frac{1}{2(\Dx)^2}\int^{u_j}_{u_{j-1}}
    \zetae(z,\taum_j,u)\partial_z(A(z)-A(u_{j-1}))^2\dz \\ 
    & = \frac{1}{2}\signe'(A(\taum_j)-A(u))(\Dm A(u_j))^2 \\
    & \qquad +\frac{1}{2(\Dx)^2}\int^{u_j}_{u_{j-1}}
    \zetae(z,\taum_j,u)\partial_z(A(z)-A(u_{j-1}))^2\dz.
  \end{align*}
  Concerning the second term in the definition of $T^-$,
  Lemma~\ref{DifQuotChainRule} gives
  \begin{align*}
    - \frac{1}{\Dx}\int_{u_{j-1}}^{u_j}
    \psie''(z,u)\dz A(u)_y &= -\Dm \signe(A(u_j)-A(u))A(u)_y \\
    & = -\signe'(A(\taum_j)-A(u))\Dm A(u_j)A(u)_y.
  \end{align*}
  For the last term we simply add and subtract to obtain
  \begin{align*}
    \frac{1}{2}\partial_y\psie'(u,u_j)A(u)_y & =
    \frac{1}{2}\signe'(A(\taum_j)-A(u))(A(u)_y)^2 \\ & \qquad +
    \frac{1}{2}\zetae(u_j,\taum_j,u)(A(u)_y)^2.
  \end{align*}
  Hence
  \begin{align*}
    T^- & = \frac{1}{2}\signe'(A(\taum_j)-A(u))(\Dm A(u_j)-A(u)_y)^2 \\
    & \qquad + \frac{1}{2(\Dx)^2} \int^{u_j}_{u_{j-1}}
    \zetae(z,\taum_j,u)\partial_z(A(z)-A(u_{j-1}))^2\dz \\
    &\qquad + \frac{1}{2}\zetae(u_j,\taum_j,u)(A(u)_y)^2.
  \end{align*}
  Define
  \begin{align*}
    T^+ := & \frac{1}{(\Dx)^2}\int^{u_j}_{u_{j+1}}
    \psie''(z,u)(A(z)-A(u_{j+1}))\dz \\ 
    & - \frac{1}{\Dx}\int_{u_{j+1}}^{u_j}\psie''(z,u)\dz A(u)_y +
    \frac{1}{2}\partial_y\psie'(u,u_j)A(u)_y.
  \end{align*}
  Using the same strategy as above we arrive at
  \begin{align*}
    T^+ &= \frac{1}{2}\signe'(A(\taup_j)-A(u))\left(\Dp
      A(u_j)-A(u)_y\right)^2 \\ &\qquad +
    \frac{1}{2(\Dx)^2}\int^{u_j}_{u_{j+1}}
    \zetae(z,\taup_j,u)\partial_z(A(z)-A(u_{j+1}))^2\dz \\ &\qquad +
    \frac{1}{2}\zetae(u_j,\taup_j,u)(A(u)_y)^2.
  \end{align*}
  Note that $\Ee_j = T^- + T^+$, so Claim 1 follows by removing the
  non-negative terms on the right hand side.\\* 
  \emph{Claim 2.} Suppose that $x \in I_j$. Then
  \begin{equation}\label{eq:Claim2LowerBoundLemma}
    \begin{split}
      & \liminf_{\varepsilon \downto 0} \frac{1}{2(\Dx)^2}\int_{\Pi_T}
      \Biggl[ \int^{u_j}_{u_{j-1}} \zetae(z,\taum_j,u)
      \frac{d}{dz}(A(z)-A(u_{j-1}))^2\dz \\ &
      \qquad\qquad\qquad\qquad\qquad\qquad +\int^{u_j}_{u_{j+1}}
      \zetae(z,\taup_j,u) \frac{d}{dz}(A(z)-A(u_{j+1}))^2\dz\Biggr]
      \test \dyds \\ & \qquad
      =\int_{\Pi_T}\Dm\Biggl(\Dp\sign(A(u_j)-A(u))
      \biggl[\frac{1}{2}(A(u_j)+A(u_{j+1}))-A(u)\biggr]\Biggr)\test
      \dyds.
    \end{split}
  \end{equation}
  \emph{Proof of Claim 2.}  Let
  \begin{align*}
    T_1^\varepsilon & :=
    \frac{1}{2(\Dx)^2}\int^{u_j}_{u_{j-1}}\signe'(A(z)-A(u))
    \frac{d}{dz} (A(z)-A(u_{j-1}))^2\dz, \\ K_1^\varepsilon &:=
    \frac{1}{2(\Dx)^2}\int^{u_j}_{u_{j-1}}\signe'(A(\taum_j)-A(u))
    \frac{d}{dz}(A(z)-A(u_{j-1}))^2\dz, \\
    T_2^\varepsilon &:=
    \frac{1}{2(\Dx)^2}\int^{u_j}_{u_{j+1}}\signe'(A(z)-A(u))
    \frac{d}{dz}(A(z)-A(u_{j+1}))^2\dz, \\
    K_2^\varepsilon &:=
    \frac{1}{2(\Dx)^2}\int^{u_j}_{u_{j+1}}\signe'(A(\taup_j)-A(u))
    \frac{d}{dz}(A(z)-A(u_{j+1}))^2\dz,
  \end{align*}
  and note that the left-hand side of \eqref{eq:Claim2LowerBoundLemma}
  may be written
$$
\liminf_{\varepsilon \downto 0}\int_{\Pi_T} \Bigl(
\left(T_1^\varepsilon - K_1^\varepsilon\right) + \left(T_2^\varepsilon
  - K_2^\varepsilon\right)\Bigr)\test\dyds.
$$
Let us rewrite $T_1^\varepsilon$ as follows:
\begin{align*}
  T_1^\varepsilon & = \frac{1}{(\Dx)^2}\int^{u_j}_{u_{j-1}}
  \signe'(A(z)-A(u))A'(z)(A(z)-A(u_{j-1}))\dz \\ & =
  \frac{1}{(\Dx)^2}\int^{u_j}_{u_{j-1}}
  \signe'(A(z)-A(u))A'(z)(A(u)-A(u_{j-1}))\dz \\ & \qquad +
  \frac{1}{(\Dx)^2}\int^{u_j}_{u_{j-1}}
  \signe'(A(z)-A(u))A'(z)(A(z)-A(u))\dz \\ & = \Dm
  \signe(A(u_j)-A(u))\frac{(A(u)-A(u_{j-1}))}{\Dx} +R^\varepsilon_1,
\end{align*}
where
\begin{align*}
  R^\varepsilon_1 & :=
  \frac{1}{\Dx^2}\Biggl[\signe(A(z)-A(u))(A(z)-A(u))\bigg|_{z=u_{j-1}}^{z=u_j}
  \\ & \qquad\qquad\qquad\qquad - \int^{u_j}_{u_{j-1}} \frac{d}{dz}
  \abseps{A(z)-A(u)}\dz \Biggr].
\end{align*}
Concerning $K_1^\varepsilon$, we apply Lemma~\ref{DifQuotChainRule} to
obtain
\begin{align*}
  K_1^\varepsilon & = \frac{1}{2(\Dx)^2}\int^{u_j}_{u_{j-1}}
  \signe'(A(\taum_j)-A(u))\partial_z(A(z)-A(u_{j-1}))^2\dz \\ & =
  \frac{1}{2}\signe'(A(\taum_j)-A(u))(\Dm A(u_j))^2\dz \\ & =
  \frac{1}{2}\Dm \signe(A(u_j)-A(u))\Dm A(u_j).
\end{align*}
It now follows that
$$
T_1^\varepsilon-K_1^\varepsilon =
-\frac{1}{\Dx}\Dm\signe(A(u_j)-A(u))\left[\frac{1}{2}\left(A(u_j)+A(u_{j-1})\right)-A(u)\right]
+ R^\varepsilon_1.
$$

Performing the same type of computations as above yields
$$
T_2^\varepsilon-K_2^\varepsilon = \frac{1}{\Dx}\Dp\signe(A(u_j)-A(u))
\left[\frac{1}{2}\left(A(u_{j+1})+A(u_{j})\right)-A(u)\right] +
R^\varepsilon_2,
$$
where
\begin{align*}
  R^\varepsilon_2 & := \frac{1}{\Dx^2}\Biggl[\int_{u_j}^{u_{j+1}}
  \frac{d}{dz}\abseps{A(z)-A(u)}\dz \\ & \qquad\qquad\qquad\qquad
  -\signe(A(z)-A(u))(A(z)-A(u))\bigg|^{z=u_{j+1}}_{z=u_j}\Biggr].
\end{align*}
Next, observe that
\begin{displaymath}
  \begin{split}
    R^\varepsilon_1 &=
    \frac{1}{\Dx^2}\Bigg[_{z=u_{j-1}}^{z=u_j}{}\signe(A(z)-A(u))(A(z)-A(u))-
    \abseps{A(z)-A(u)}\Bigg],
  \end{split}
\end{displaymath}
so $\lim_{\varepsilon \downto 0}R^\varepsilon_1 = 0$. The same
considerations apply to $R^\varepsilon_2$ so 
$\lim_{\varepsilon \downto 0}R^\varepsilon_2 = 0$ also. 
Claim 2 follows from an application of the dominated convergence 
theorem. Finally, combining Claim 1 and Claim 2 finishes the proof.
\end{proof}

\subsection{Estimates}\label{sec:Estimates}
The purpose of this section is to find bounds on the ``unwanted''
terms in inequality \eqref{MainIneq2} and
Lemma~\ref{lemma:LowerBoundSquareTerm}.  Throughout this section the
notation is the one given in Lemma~\ref{lemma:MainLemmaTestFunc}.  We
let $C$ denote a generic constant. By constant it is meant that it
does not depend on the ``small" variables but it might depend on $T$
and the initial data.  For any set $A$, let $\mathds{1}_A$ denote its
characteristic function.

For future reference we collect some standard estimates in a lemma.
\begin{lemma}\label{omegaLemma}
  Let $\test$ be defined in Lemma~\ref{lemma:MainLemmaTestFunc}. Then
  \begin{equation*}
    \abs{\frac{\partial^k}{\partial x^k}\test(x,t,y,s)} \le \psi(t)
    \frac{\norm{\rho^{(k)}}_{L^\infty}}{r^{k+1}}\car{|x-y|\le r}(x,y)\rho_{r_0}(t-s).
  \end{equation*}
  Recall that $S_\sigma \test(x,t,y,s) = \test(x+\sigma,t,y,s)$.  If
  $|\sigma| \le \Dx$ then
  \begin{equation*}
    \abs{\frac{\partial^k}{\partial x^k}S_\sigma \test(x,t,y,s)}
    \le \psi(t)\frac{\norm{\rho^{(k)}}_{L^\infty}}{r^{k+1}}\car{|x-y|
      \le r+\Dx}(x,y)\rho_{r_0}(t-s).
  \end{equation*}
  Considering the difference quotient applied to $\omega_r$ we have
  \begin{equation*}
    \abs{\Dp\omega_r(x-y)} 
    \le \frac{\norm{\rho'}_{L^\infty}}{r^2}\car{|x-y|\le r + \Dx}(x,y).
  \end{equation*}
\end{lemma}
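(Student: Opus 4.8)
The plan is to exploit the product structure of $\test$ and reduce everything to elementary scaling and support properties of the mollifier $\rho$. Recall that $\test(x,t,y,s)=\psi^\alpha(t)\,\omega_r(x-y)\,\rho_{r_0}(t-s)$, so the only factor depending on $x$ is $\omega_r(x-y)$. The remaining factors are nonnegative: $\rho_{r_0}(t-s)\ge 0$ is immediate from $\rho\ge 0$, and $\psi^\alpha(t)=H_\alpha(t-\nu)-H_\alpha(t-\tau)\ge 0$ because $H_\alpha$ is nondecreasing (being an antiderivative of $\rho_\alpha\ge 0$) and $\nu<\tau$, so $t-\nu>t-\tau$. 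Consequently $\partial_x^k\test=\psi^\alpha(t)\,\rho_{r_0}(t-s)\,\partial_x^k\omega_r(x-y)$, and both scalar prefactors may be pulled outside the absolute value without a change of sign. It therefore suffices to control the $x$-derivatives of $\omega_r$.

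For the first estimate I would compute, using $\omega_r(z)=\tfrac1r\rho(z/r)$ and the chain rule,
\[
  \partial_x^k\omega_r(x-y)=\frac{1}{r^{k+1}}\rho^{(k)}\!\left(\frac{x-y}{r}\right).
\]
Bounding $\abs{\rho^{(k)}}$ by its supremum and using $\mathrm{supp}(\rho)\subset[-1,1]$, which forces this factor to vanish unless $\abs{x-y}\le r$, gives directly
\[
  \abs{\partial_x^k\test}\le \psi^\alpha(t)\frac{\norm{\rho^{(k)}}_{L^\infty}}{r^{k+1}}\car{\abs{x-y}\le r}(x,y)\,\rho_{r_0}(t-s),
\]
which is the asserted inequality (the $\psi(t)$ of the statement being $\psi^\alpha(t)$). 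The second estimate is identical once I replace the argument $x-y$ by $x+\sigma-y$, since $S_\sigma\test(x,t,y,s)=\psi^\alpha(t)\,\omega_r(x+\sigma-y)\,\rho_{r_0}(t-s)$. The only change is in the support: $\rho^{(k)}\!\big((x+\sigma-y)/r\big)$ vanishes unless $\abs{x+\sigma-y}\le r$, and the triangle inequality together with $\abs{\sigma}\le\Dx$ yields $\abs{x-y}\le\abs{x+\sigma-y}+\abs{\sigma}\le r+\Dx$, so the indicator widens to $\car{\abs{x-y}\le r+\Dx}$.

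For the third estimate I would avoid differentiating the difference quotient directly and instead write it as an average, $\Dp\omega_r(x-y)=\tfrac1\Dx\int_x^{x+\Dx}\omega_r'(\xi-y)\,d\xi$; equivalently, the mean value theorem gives $\Dp\omega_r(x-y)=\omega_r'(\xi^\ast-y)$ for some $\xi^\ast\in(x,x+\Dx)$. Since $\omega_r'(z)=r^{-2}\rho'(z/r)$, the integrand is bounded by $\norm{\rho'}_{L^\infty}/r^2$ and vanishes unless $\abs{\xi-y}\le r$; for $\xi\in[x,x+\Dx]$ this again forces $\abs{x-y}\le r+\Dx$ by the triangle inequality, producing the claimed bound with indicator $\car{\abs{x-y}\le r+\Dx}$.

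None of the three steps presents a genuine obstacle; the lemma is just a collection of standard mollifier estimates. The only points requiring a little care are the bookkeeping of supports under the shift $S_\sigma$ and the difference quotient $\Dp$—namely tracking how the triangle inequality enlarges the support radius from $r$ to $r+\Dx$—and the one-line verification that $\psi^\alpha$ and $\rho_{r_0}$ are nonnegative, so that they pull out of the absolute value unchanged.
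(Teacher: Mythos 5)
Your proposal is correct and follows essentially the same route as the paper: the scaling identity $\partial_x^k\omega_r = r^{-(k+1)}\rho^{(k)}(\cdot/r)$ together with $\mathrm{supp}(\rho)\subset[-1,1]$ for the first bound, the triangle-inequality enlargement of the support from $r$ to $r+\Dx$ for the shifted version, and the mean value theorem with $\norm{\omega_r'}_{L^\infty}=\norm{\rho'}_{L^\infty}r^{-2}$ for the difference quotient. The only (harmless) addition is your explicit check that $\psi^\alpha$ and $\rho_{r_0}$ are nonnegative, which the paper leaves implicit.
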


\begin{proof}
  Note that
  \begin{equation*}
    \frac{\partial^k}{\partial x^k}\omega_r(x) = \frac{1}{r^{k+1}}\rho^{(k)}\left(\frac{x}{r}\right).
  \end{equation*}
  Since $\mathrm{supp}(\rho) \subset [-1,1]$ we have
  \begin{equation*}
    \abs{\frac{\partial^k}{\partial x^k}\omega_r(x)} \le 
    \frac{\norm{\rho^{(k)}}_{L^\infty}}{r^{k+1}}\car{|x|\le r}(x),
  \end{equation*}
  which proves the first statement.

  Consider the second statement.  If $|x-y| \ge r + \Dx$, then
  \begin{equation*}
    |x+\sigma-y| \ge |x-y|-|\sigma| \ge r+\Dx-\Dx = r,
  \end{equation*}
  so it follows that $\car{|x + \sigma -y| \le r}(x,y) \le
  \car{|x-y|\le r + \Dx}(x,y)$; this proves the second statement.

  To prove the last statement, recall that
$$
\Dp\omega_r(x) = \frac{\omega_r(x + \Dx)-\omega_r(x)}{\Dx}.
$$
If $|x| \ge r + \Dx$ then $\omega_r(x + \Dx)=\omega_r(x)=0$, so
$\mathrm{supp}(\Dp(\omega_r)) \subset [-r-\Dx,r+\Dx]$.  By the mean
value theorem and the fact that
$\norm{\omega_r'}_{L^\infty}=\norm{\rho'}_{L^\infty}r^{-2}$ we get
$$
\abs{\omega_r(x+\Dx)-\omega_r(x)} \le
\frac{\norm{\rho'}_{L^\infty}}{r^2}\Dx.
$$
The last statement follows from this.
\end{proof}

\begin{estimate}\label{Beta2Est}
  \begin{equation*}
    \left|\intPi \sgn{u_\Dx-u}\left(f(u_\Dx)-f(u)\right)
      \left(\Dp\test +\test_{y}\right) \dX \right| 
    \le C\frac{\Dx}{r}\left(1 + \frac{\Dx}{r}\right).
  \end{equation*}
\end{estimate}

\begin{proof}
  Let
$$
\beta := \intPi \sgn{u_\Dx-u}\left(f(u_\Dx)-f(u)\right) \left(\Dp\test
  +\test_{y}\right) \dX.
$$
First note that
$$
\Dp\test + \test_y = \Dp\test -\test_x.
$$
We claim that
\begin{equation}\label{DiffQuotDerDiff}
  \left(\Dp\test-\test_{x}\right)(x,t,y,s) 
  = \frac{1}{\Dx} \int_0^{\Dx} 
  (\Dx-\sigma)\test_{xx}(x + \sigma,t,y,s)\,d\sigma.
\end{equation}
Hence
$$
\beta = \frac{1}{\Dx} \intPi \int_0^{\Dx}\!\!
\signe\left(A(u_\Dx)-A(u)\right)\left(f(u_\Dx)-f(u)\right)
(\Dx-\sigma)S_\sigma\test_{xx}\,d\sigma \dX.
$$
We can write
\begin{align*}
  &\sgn{u_\Dx-u}\left(f(u_\Dx)-f(u)\right)(x,t,y,s) \\
  & \qquad = \sum_j
  \underbrace{\sgn{u_j-u}\left(f(u_j)-f(u)\right)(t,y,s)}_{\Theta_j}\car{I_j}(x).
\end{align*}
Using summation by parts
\begin{align*}
  \frac{1}{\Dx}\int_{\R}\int_0^{\Dx}
  &\sgn{u_\Dx-u}\left(f(u_\Dx)-f(u)\right)(\Dx-\sigma)
  S_\sigma\test_{xx} \, d\sigma dx \\ &= \frac{1}{\Dx}\int_0^{\Dx}
  \sum_j\Theta_j\int_{\R}\car{I_j}(x) (\Dx-\sigma)S_\sigma\test_{xx}\,
  dx d\sigma \\ &=\frac{1}{\Dx}\int_0^\Dx \sum_j \Theta_j \int_{I_j}
  \test_{xx}(x+\sigma,t,y,s) \,dx (\Dx-\sigma)\,d\sigma \\
  &=\int_0^\Dx \sum_j \Theta_j \left(\Dm
    S_\sigma\test_{x,j+1/2}\right) (\Dx-\sigma)\,d\sigma \\ &= -
  \sum_j \Dp\Theta_j \int_0^\Dx S_\sigma\test_{x,j+1/2}
  (\Dx-\sigma)\,d\sigma,
\end{align*}
where $S_\sigma\test_{x,j+1/2}=\test_x(x_{j+1/2}+\sigma,t,y,s)$.  By
Lemma~\ref{omegaLemma} we have
$$
\abs{\test_{x}(x+\sigma,t,y,s)} \le C\frac{1}{r^2}\car{|x-y|\le r +
  \Dx}(x,y)\rho_{r_0}(t-s).
$$
Hence
$$
\Bigl| \int_0^{\Dx} S_\sigma\test_{x,j+1/2}(\Dx-\sigma)\,
d\sigma\Bigr| \le C \frac{\Dx^2}{r^2}\car{|x_{j+1/2}-y|\le r +
  \Dx}(y)\rho_{r_0}(t-s).
$$
Now
$$
\abs{\Dp\Theta_j}\le \norm{f}_{\mathrm{Lip}} \abs{\Dp u_j}.
$$
Therefore
\begin{align*}
  &\Bigl| \frac{1}{\Dx}\int_{\R}\int_0^{\Dx}
  \sgn{u_\Dx-u}\left(f(u_\Dx)-f(u)\right)(\Dx-\sigma)
  \test_{xx}^\sigma\, d\sigma dx\Bigr| \\
  &\qquad \le \sum_j \abs{\Dp \Theta_j} \Bigl|
  \int_0^\Dx S_\sigma\test_{x,j+1/2} (\Dx-\sigma)\,d\sigma \Bigr| \\
  &\qquad \le C\norm{f}_{\mathrm{Lip}} \sum_j\abs{\Dp u_j}
  \frac{\Dx^2}{r^2}\car{|x_{j+1/2}-y|\le r + \Dx}(y)\rho_{r_0}(t-s).
\end{align*}
It follows by the above and Lemma~\ref{SemiDiscProp} that
\begin{align*}
  \abs{\beta} & \le C \Dx^2\frac{r+\Dx}{r^2}\int_0^T\sum_j\abs{\Dp
    u_j}\, dt \\ &= C\frac{r+\Dx}{r^2}\int_{\Pi_T}
  \abs{u_\Dx(x+\Dx,t)-u_\Dx(x,t)} \,dxdt \\
  &= CT\frac{1}{r}\left(1+\frac{\Dx}{r}\right)
  \Dx\abs{u^0_\Dx}_{BV(\R)}.
\end{align*}
This concludes the proof.
\end{proof}

\begin{estimate}\label{DoubleDerEst}
  \begin{align*}
    &\bigg|\intPi \abs{A(u_\Dx)-A(u)} \left(\Dm\Dp\test + (\Dp +
      \Dm)\test_{y} + \test_{yy}\right) \dX \bigg| \\ & \qquad \qquad
    \le C \frac{\Dx^2}{r^3}\left(1 + \frac{\Dx}{r}\right).
  \end{align*}
\end{estimate}

\begin{proof}
  Since $\test_{xx} + 2\test_{xy} + \test_{yy} = 0$ it follows that
$$
\Dm\Dp\test + (\Dp + \Dm)\test_{y} + \test_{yy} =
\left(\Dm\Dp\test-\test_{xx}\right) +\left((\Dp + \Dm)\test -
  2\test_{x}\right)_{y}.
$$
Thus
\begin{align*}
  \intPi & \abs{A(u_\Dx)-A(u)} \left(\Dm\Dp\test + (\Dp +
    \Dm)\test_{y} + \test_{yy}\right) \dX \\ &=\intPi
  \abs{A(u_\Dx)-A(u)}\left(\Dm\Dp\test-\test_{xx}\right) \dX \\
  &\qquad + \intPi \abs{A(u_\Dx)-A(u)} \left((\Dp +
    \Dm)\test-2\test_{x}\right)_{y} \dX \\ &=: \zeta_1 + \zeta_2.
\end{align*}
Consider the term $\zeta_1$. We use the same strategy as in
Estimate~\ref{Beta2Est}.  Writing $\mu(\sigma)=\test(x+\sigma,t,y,s)$,
a Taylor expansion gives
\begin{equation*}
  \mu(z)-\mu(0) = z\mu'(0)+\frac{1}{2}z^2\mu''(0)
  +\frac{1}{6}z^3\mu^{(3)}(0)
  -\frac{1}{6}\int_0^z(\sigma-z)^3\mu^{(4)}(\sigma)\,d\sigma.  
\end{equation*}
Using this, we get
\begin{align*}
  & \mu(\Dx)-2\mu(0)+\mu(-\Dx) - \Dx^2\mu''(0) \\ & \qquad
  =-\frac{1}{6}\int_0^{\Dx}(\sigma-\Dx)^3\mu^{(4)}(\sigma)\,d\sigma +
  \frac{1}{6}\int_{-\Dx}^{0}(\sigma+\Dx)^3\mu^{(4)}(\sigma)\,d\sigma.
\end{align*}
It follows that
\begin{align*}
  \Dp\Dm\test - \test_{xx} & =-\frac{1}{6\Dx^2}
  \int_0^{\Dx}(\sigma-\Dx)^3 \frac{\partial^4}{\partial
    x^4}\test(x+\sigma,t,y,s)\,d\sigma \\ & \qquad +
  \frac{1}{6\Dx^2}\int_{-\Dx}^{0}(\sigma+\Dx)^3
  \frac{\partial^4}{\partial x^4}\test(x+\sigma,t,y,s)\,d\sigma.
\end{align*}
Splitting $\zeta_1$ according to this equality we get
\begin{align*}
  \zeta_1 &=\intPi
  \abs{A(u_\Dx)-A(u)}\left(\Dm\Dp\test-\test_{xx}\right) \dX \\ &=
  -\frac{1}{6\Dx^2}\intPi \int_0^{\Dx}
  \abs{A(u_\Dx)-A(u)}(\sigma-\Dx)^3\frac{\partial^4}{\partial x^4}
  \test(x+\sigma,t,y,s)\,d\sigma\dX \\ &\qquad +
  \frac{1}{6\Dx^2}\intPi \int_{-\Dx}^{0}
  \abs{A(u_\Dx)-A(u)}(\sigma+\Dx)^3\frac{\partial^4}{\partial x^4}
  \test(x+\sigma,t,y,s) \,d\sigma\dX \\
  &=: \zeta_{1,1} + \zeta_{1,2} .
\end{align*}
We also have that
$$
\abs{A(u_\Dx)-A(u)}(x,t,y,s) = \sum_j
\underbrace{\abs{A(u_j)-A(u)}(t,y,s)}_{\Phi_j}\car{I_j}(x).
$$
Now consider $\zeta_{1,1}$,
\begin{align*}
  -\int_0^{\Dx}&\int_{\R}\abs{A(u_\Dx)-A(u)}
  (\sigma-\Dx)^3\frac{\partial^4}{\partial x^4}S_\sigma\test \,dx
  d\sigma \\ &=-\sum_j
  \abs{A(u_j)-A(u)}(t,y,s)\int_0^{\Dx}(\sigma-\Dx)^3\int_{\R}
  \car{I_j}(x)\frac{\partial^4}{\partial x^4}S_\sigma \test \,dx
  d\sigma \\ &= -\Dx\int_0^{\Dx}(\sigma-\Dx)^3 \sum_j \Phi_j \Dm
  \test^{\sigma}_{xxx,j+1/2} \, d\sigma \\
  &=\Dx\sum_j\Dp\Phi_j\int_0^{\Dx}(\sigma-\Dx)^3
  S_\sigma\test_{xxx,j+1/2} \, d\sigma,
\end{align*}
where
$$
S_\sigma\test_{xxx,j+1/2}(t,y,s) = \frac{\partial^3}{\partial
  x^3}\test(x_{j+1/2}+\sigma,t,y,s).
$$
Now we use Lemma~\ref{omegaLemma} to estimate this term as follows:
\begin{align*}
  \abs{\zeta_{1,1}}&= \Bigl|\frac{1}{6\Dx^2} \intPi
  \int_0^{\Dx}\abs{A(u_\Dx)-A(u)}
  (\sigma-\Dx)^3\frac{\partial^4}{\partial x^4}
  \test(x+\sigma,t,y,s)\,d\sigma \dX\Bigr|
  \\
  &= \Bigl| \frac{1}{6\Dx} \int_{\Pi_T} \int_0^T \sum_j
  \Dp\Phi_j\int_0^{\Dx}(\sigma-\Dx)^3 S_\sigma\test_{xxx,j+1/2}
  \,d\sigma\, \,dt\dyds\Bigr|
  \\
  &\le C\frac{r+\Dx}{\Dx^2 r^4} \int_{\Pi_T} \abs{\Dp A(u_\Dx)}
  \Bigl(\int_0^{\Dx}(\sigma-\Dx)^3\,d\sigma\Bigr)\,dxdt \\ & \le
  C\Dx^2\frac{r+\Dx}{r^4} =
  C\frac{\Dx^2}{r^3}\left(1+\frac{\Dx}{r}\right),
\end{align*}
where we have used that $\abs{A(u_\Dx(\cdot,t))}_{BV(\R)}$ is bounded
independently of $\Delta x,t, \eta$ by Lemma~\ref{SemiDiscProp}.  The
term $\zeta_{1,2}$ is estimated in a similar way.

Now consider $\zeta_2$. Again, let $\mu(\sigma) =
\test(x+\sigma,t,y,s)$. Then
$$
(\Dp + \Dm)\test -2\test_x = \frac{1}{\Dx}
\left[\mu(\Dx)-\mu(-\Dx)-2\Dx\mu'(0)\right].
$$
By a Taylor expansion
$$
\mu(z)-\mu(0) = z\mu'(0) + \frac{1}{2}z^2 \mu''(0)
+\frac{1}{2}\int_0^z(\sigma-z)^2\mu^{(3)}(\sigma)\,d\sigma.
$$
Puting $z = \pm \Dx$ and subtracting the corresponding equations we
obtain
\begin{align*}
  (\Dp + \Dm)\test -2\test_x & =
  \frac{1}{2\Dx}\int_0^\Dx(\sigma-\Dx)^2
  \frac{\partial^3}{\partial x^3}\test(x+\sigma,t,y,s)\,d\sigma \\
  & \qquad +\frac{1}{2\Dx}\int^0_{-\Dx}(\sigma+\Dx)^2
  \frac{\partial^3}{\partial x^3}\test(x+\sigma,t,y,s)\,d\sigma.
\end{align*}
We may split $\zeta_2$ into the two terms
\begin{align*}
  \zeta_2 &= \frac{1}{2\Dx}\intPi\int_0^\Dx \abs{A(u_\Dx)-A(u)}
  (\sigma-\Dx)^2\frac{\partial^3}{\partial x^3}
  \frac{\partial}{\partial y}\test(x+\sigma,t,y,s)\,d\sigma\dX \\
  &\quad
  +\frac{1}{2\Dx}\intPi\int^0_{-\Dx}\abs{A(u_\Dx)-A(u)}(\sigma+\Dx)^2
  \frac{\partial^3}{\partial x^3}\frac{\partial}{\partial
    y}\test(x+\sigma,t,y,s)\,d\sigma \dX \\
  &=:\zeta_{2,1}+\zeta_{2,2}.
\end{align*}
Performing integration by parts, $\zeta_{2,1}$ becomes
$$
\frac{1}{2\Dx}\intPi\int_0^\Dx \sgn{A(u_\Dx)-A(u)}A(u)_y(\sigma-\Dx)^2
\frac{\partial^3}{\partial x^3}\test(x+\sigma,t,y,s)\,d\sigma\dX.
$$
Thus, by Lemma~\ref{omegaLemma},
\begin{align*}
  |\zeta_{2,1}| &\le \frac{1}{2\Dx}\intPi |A(u)_y|
  \abs{\int_0^\Dx(\sigma-\Dx)^2
    \frac{\partial^3}{\partial x^3}\test(x+\sigma,t,y,s)\,d\sigma}\dX \\
  & \le T C \frac{r + \Dx}{r^4\Dx}
  \left(\int_0^\Dx(\sigma-\Dx)^2\,d\sigma \right)
  \int_{\Pi_T}|A(u)_y|\dyds \\
  & \le C \frac{\Dx^2}{r^3}\left(1 + \frac{\Dx}{r}\right),
\end{align*}
as $\abs{A(u(\cdot,s))}_{BV(\R)} \le \abs{A(u^0(\cdot))}_{BV(\R)}$ for
all $s$.  The same estimate holds for $\zeta_{2,2}$.
\end{proof}

\begin{estimate} \label{gamma3Est}
  \begin{equation*}
    \left| \intPi \left(\int_{u_\Dx}^{S_\Dx u_{\Dx}} 
	\sgn{z-u}F_2'(z)\,dz\right) \,\Dp\test \dX\right| 
    \le C\frac{\Dx}{r}\left(1 + \frac{\Dx}{r}\right).
  \end{equation*}
\end{estimate}

\begin{proof}
  By definition $F_2'$ is bounded. Hence,
  \begin{equation*}
    \Bigl| \int_{u_j}^{u_{j+1}} \sgn{z-u} F_2'(z)\,dz\Bigr| 
    \le \norm{F_2}_{\mathrm{Lip}} \Dx \abs{\Dp u_j}.
  \end{equation*}
  Note that $\abs{u_\Dx(\cdot,t)}_{BV(\R)}$ is bounded independently
  of $\Dx,t,\eta$ by Lemma~\ref{SemiDiscProp}, so we may apply
  Lemma~\ref{omegaLemma} to obtain the result.
\end{proof}

Next, we consider the terms from
Lemma~\ref{lemma:LowerBoundSquareTerm}.
\begin{estimate}\label{estimate:DissTermSchemeErr}
  \begin{align*}
    & \intPi \Dm\left(\Dp \sign(A(u_j)-A(u))
      \left[\frac{1}{2}(A(u_j)+A(u_{j+1}))-A(u)\right]\right) \test \dX \\
    & \qquad \qquad \ge -C(1+r+\Delta
    x)\frac{\Dx}{r^2}\left(1+\frac{\Dx}{r}\right)^3.
  \end{align*}
\end{estimate}

\begin{proof}
  Let us first show that
  \begin{multline}\label{ineq:TransThetaEst}
    \abs{\Dp
      \sign(A(u_j)-A(u))\left[\frac{1}{2}(A(u_j)+A(u_{j+1}))-A(u)\right]}
    \\ \le \Dx \Dp \sign(A(u_j)-A(u))\Dp(A(u_j)).
  \end{multline}
  First note that
  \begin{align*}
    \Dp \sign(A(u_j)&-A(u)) \\
    & = \frac{2}{\Dx}\sign(A(u_j)-A(u_{j+1})) \car{A(u) \in \interval
      (A(u_j),A(u_{j+1})},
  \end{align*}
  so the left-hand side of \eqref{ineq:TransThetaEst} is zero whenever
  $A(u) \notin \interval (A(u_j),A(u_{j+1}))$.  Second, if $c \in
  \interval(a,b)$, then it follows that
$$
\abs{\frac{1}{2}(a + b)-c} = \frac{1}{2}(|b-c| + |a-c|) \le |b-a|.
$$
Since $z \mapsto \sgn{A(z)-A(u)}$ is increasing, the right-hand side
is positive. This proves \eqref{ineq:TransThetaEst}.

Performing integration by parts we obtain
\begin{align*}
  & \bigg|\intPi \Dm\left(\Dp \sign(A(u_j)-A(u))
    \left[\frac{1}{2}(A(u_j)+A(u_{j+1}))-A(u)\right]\right) \test \dX \bigg|\\
  & \qquad \le \intPi \abs{\Dp \sign(A(u_j)-A(u))
    \left[\frac{1}{2}(A(u_j)+A(u_{j+1}))-A(u)\right]}\abs{\Dp \test} \dX \\
  & \qquad \le \Dx \intPi \Dp \sign(A(u_j)-A(u))\Dp(A(u_j)) \abs{\Dp
    \test}\dX.
\end{align*}
Using integration by parts for difference quotients and the Leibniz
rule for difference quotients, we obtain
\begin{align*}
  & \intPi\Dp\signe(A(u_\Dx)-A(u))\Dp A(u_\Dx)\abs{\Dp\test} \dX \\ &
  \quad = -\intPi \signe(A(u_\Dx)-A(u))\Dp A(u_\Dx)\Dm\abs{\Dp\test}
  \dX \\ & \quad \qquad -\intPi \signe(A(u_\Dx)-A(u))\Dm\Dp
  A(u_\Dx)\abs{\Dm\test}\dX \\ & \quad =: \zeta_1+\zeta_2.
\end{align*}

To estimate $\zeta_1$ we first observe that $\Dm\abs{\Dp\test} \le
\abs{\Dp\Dm\test}$.  Furthermore, when proving
Estimate~\ref{DoubleDerEst}, we established that
\begin{align*}
  &\Dp\Dm\test(x,t,y,s) \\
  &\qquad = \test_{xx}(x,t,y,s) -\frac{1}{6\Dx^2}
  \int_0^{\Dx}(\sigma-\Dx)^3
  \frac{\partial^4}{\partial x^4}\test(x+\sigma,t,y,s)\,d\sigma \\
  &\qquad \qquad + \frac{1}{6\Dx^2}\int_{-\Dx}^{0}\!(\sigma+\Dx)^3
  \frac{\partial^4}{\partial x^4} \test(x+\sigma,t,y,s)\,d\sigma.
\end{align*}
By Lemma~\ref{omegaLemma},
\begin{align*}
  & \Bigl| \int_0^{\pm\Dx}(\sigma\mp\Dx)^3\frac{\partial^4}{\partial
    x^4} \test(x+\sigma,t,y,s)\,d\sigma\Bigr | \\ & \qquad \le
  C\frac{(\Dx)^4}{r^5}\car{|x-y|\le r+\Dx}(x,y)\rho_{r_0}(t-s).
\end{align*}
Using Lemma~\ref{omegaLemma} once more, the above implies that
\begin{align*}
  \int_{\Pi_T} \abs{\Dp\Dm\test}\dyds &\le \int_{\Pi_T}
  \abs{\test_{xx}}\dyds + C\frac{\Dx^2}{r^4}
  \left(1+\frac{\Dx}{r}\right) \\ &\le C\left(\frac{1}{r^2}
    +\frac{\Dx^2}{r^4}\right)\left(1 + \frac{\Dx}{r}\right).
\end{align*}
Therefore,
\begin{align*}
  \abs{\zeta_1}&= \Bigl| \intPi \signe(A(u_\Dx)-A(u))\Dp
  A(u_\Dx)\Dp\abs{\Dm\test} \dX\Bigr|
  \\
  & \le \int_{\Pi_T} \abs{\Dp A(u_\Dx)} \Bigl(\int_{\Pi_T}
  \abs{\Dp\Dm\test} \dyds\Bigr) \dxdt
  \\
  &\le C\left(\frac{1}{r^2} + \frac{\Dx^2}{r^4}\right)
  \left(1+\frac{\Dx}{r}\right) \int_{\Pi_T} \abs{\Dp A(u_\Dx)}\dxdt.
\end{align*}
Recall that $\abs{A(u_\Dx(\cdot,t))}_{BV(\R)}$ is bounded
independently of $\Delta x,t, \eta$ by Lemma~\ref{SemiDiscProp}.

Concerning $\zeta_2$ we have
\begin{align*}
  \abs{\zeta_2}& = \Bigl| \intPi \signe(A(u_\Dx)-A(u)) \left(\Dm\Dp
    A(u_\Dx)\right)\abs{\Dm\test}\dX\Bigr| \\ &\le \intPi \abs{\Dm\Dp
    A(u_\Dx)}\abs{\Dm\test}\dX \\ &\le C\frac{r + \Dx}{r^2}
  \int_{\Pi_T} \abs{\Dm\Dp A(u_\Dx)}\dxdt.
\end{align*}
Note that it follows from \eqref{SemiDiscEqu} and
Lemma~\ref{SemiDiscFluxDiffBounds} that $\norm{\Dm\Dp
  A(u_\Dx(\cdot,t))}_{L^1(\R)}$ is bounded independently of $\Dx,t,
\eta$. Hence,
\begin{align*}
  & \Dx\intPi \Dp\signe(A(u_\Dx)-A(u)) \Dp A(u_\Dx)\abs{\Dp\test}\dX
  \\ &\qquad \le \Dx\left(\abs{\zeta_1}+\abs{\zeta_2}\right) \\
  & \qquad \le C(1 + r + \Delta x)\left(\frac{\Dx}{r^2}
    +\frac{\Dx^3}{r^4}\right)\left(1 +\frac{\Delta x}{r}\right)\\
  & \qquad \le C(1+r+\Delta x)\frac{\Dx}{r^2}
  \left(1+\frac{\Dx}{r}\right)^3.
\end{align*}
\end{proof}

\begin{estimate}\label{est:Re}
  \begin{align*}
    & \int_{\Pi_T}\left(\liminf_{\varepsilon \downto 0}
      \int_{\Pi_T}\frac{1}{2}\left(\zetae(u_\Dx,\taum_\Dx,u) +
        \zetae(u_\Dx,\taup_\Dx,u)\right)(A(u)_y)^2\test\dyds\right)\dxdt
    \\ & \qquad\qquad \ge -C\left(\frac{\Dx}{r_0} + \frac{\Dx}{r} +
      \frac{\Dx}{r^2}\right)
  \end{align*}
\end{estimate}

\begin{proof}
  Set
  \begin{align*}
    & \mathcal{R}_{j}^\varepsilon := \left(\zetae(u_j,\taum_j,u) +
      \zetae(u_j,\taup_j,u)\right)(A(u)_y)^2 \\ & \qquad =
    (\signe'(A(u_j)-A(u))-\signe'(A(\taum_j)-A(u)))(A(u)_y)^2 \\ &
    \qquad \qquad
    +(\signe'(A(u_j)-A(u))-\signe'(A(\taup_j)-A(u)))(A(u)_y)^2,
  \end{align*}
  and $\mathcal{R}^\varepsilon_\Dx(x,t,y,s) =
  \mathcal{R}_j^\varepsilon(y,t,s)$ for $x \in I_j$. Note that the
  term we want to estimate may be written
  \begin{align*}
    & \liminf_{\varepsilon \downto 0}\int_{\Pi_T}
    \mathcal{R}^\varepsilon_\Dx(x,t,y,s)\test(x,t,y,s)\dyds \\ &
    \qquad = \sum_j\liminf_{\varepsilon \downto 0}
    \left(\int_{\Pi_T}\mathcal{R}^\varepsilon_j(t,y,s)
      \test(x,t,y,s)\dyds\right) \car{I_j}(x).
  \end{align*}
  Let us define an entropy function by
  \begin{align*}
    & \partial_u \Psi_\varepsilon(u,u_{j-1},u_j,u_{j+1}) \\ & \quad :=
    \signe(A(\thetam_j)-A(u)) -2\signe(A(u_j)-A(u)) +
    \signe(A(\thetap_j)-A(u)).
  \end{align*}
  Recall that $\theta_j^\pm = \theta_j^\pm(u)$, so the above function
  is not as explicit as it appears. However, by
  Lemma~\ref{lemma:LimitOfSigne} we are able to obtain an explicit
  expression for the limit as $\eps\to 0$.  To simplify the notation we write
  $\Psi_{\varepsilon,j}'(u)$ for $\partial_u
  \Psi_\varepsilon(u,u_{j-1},u_{j},u_{j+1})$.  Let us also define the
  entropy flux functions
\begin{equation*}
\Xi_{\varepsilon,j}'(u) = \Psi_{\varepsilon,j}'(u)f'(u), \qquad
\Phi_{\varepsilon,j}'(u) = \Psi_{\varepsilon,j}'(u)A'(u).
\end{equation*}
That is
$(\Psi_{\varepsilon,j},\Xi_{\varepsilon,j},\Phi_{\varepsilon,j})$ is
an entropy-entropy flux triple.

Multliplying equation \eqref{MainProblem} by
$\Psi_{\varepsilon,j}'(u)$ yields
\begin{equation*}
\Psi_{\varepsilon,j}(u)_s + \Xi_{\varepsilon,j}(u)_y =
\Phi_{\varepsilon,j}(u)_{yy} - \partial_y
\Psi_{\varepsilon,j}'(u)A(u)_y.
\end{equation*}
By Lemma~\ref{DifQuotChainRule} we see that
\begin{align}
  \partial_y\Psi_{\varepsilon,j}'(u)A(u)_y 
  &=\Bigl[\signe(A(\thetam_j)-A(u))_y -2\signe(A(u_j)-A(u))_y\notag\\
  &\hphantom{=-\Bigl[ }\quad +
    \signe(A(\thetap_j)-A(u))_y
  \Bigr]  A(u)_y\notag\\
  &=-\Bigl[\signe'(A(\taum_j)-A(u)) -2\signe'(A(u_j)-A(u))\notag\\
  &\hphantom{=-\Bigl[ }\quad +
    \signe'(A(\taup_j)-A(u))
  \Bigr]  \left(A(u)_y\right)^2\notag\\
  &=
  \mathcal{R}_j^\varepsilon.\label{eq:errRepr}
\end{align}
It follows that we can write
\begin{equation}\label{eq:DissipErrTermExpresion}
  \begin{split}
    \int_{\Pi_T}\mathcal{R}^\varepsilon_j\test \dyds &
    =\int_{\Pi_T}\Psi_{\varepsilon,j}(u)\test_s +
    \Xi_{\varepsilon,j}(u) \test_y +
    \Phi_{\varepsilon,j}(u)\test_{yy}\dyds \\ & =: T_1^\varepsilon +
    T_2^\varepsilon + T_3 ^\varepsilon.
  \end{split}
\end{equation}
Let us consider the three terms separately.

By Lemma~\ref{lemma:LimitOfSigne},
\begin{align*}
  \lim_{\varepsilon \downto 0}\Psi_{\varepsilon,j}(u) &=
  \lim_{\varepsilon \downto 0}\int_{u_j}^u \Psi_{\varepsilon,j}'(z)\dz
  = \int_{u_j}^u \lim_{\varepsilon \downto 0}
  \Psi_{\varepsilon,j}'(z)\dz
  \\ & = \int_{u_j}^u  \sign(A(z)-A(u_j))-\sg{j-1}{A(z)}\dz \\
  & \qquad + \int_{u_j}^u \sign(A(z)-A(u_j)) - \sg{j}{A(z)}\dz,
\end{align*}
where $\sg{j}{\sigma} := \sg{(A(u_j),A(u_{j+1}))}{\sigma}$.  Again by
Lemma~\ref{lemma:LimitOfSigne}, the mapping
$$
z \mapsto \sign(A(z)-A(u_j))-\sg{j-1}{A(z)}
$$
has support in $\interval(u_j,u_{j-1})$. Similar considerations apply
to the second term. Hence
\begin{align*}
  \abs{\lim_{\varepsilon \downto 0}\Psi_{\varepsilon,j}(u)}
  & \le \abs{\int_{u_j}^{u_{j-1}}  \sign(A(z)-A(u_j))-\sg{j-1}{A(z)}\dz} \\
  & \qquad + \abs{\int_{u_j}^{u_{j+1}}  \sign(A(z)-A(u_j)) - \sg{j}{A(z)}\dz} \\
  & \le 2\abs{u_j-u_{j-1}} + 2 \abs{u_{j+1}-u_{j}}.
\end{align*}
By the same type of reasoning we obtain the bound
\begin{align*}
  \abs{\lim_{\varepsilon\downto 0}\Xi_j^\varepsilon(u)} &\le
  \left|\int_{u_j}^{u_{j-1}}
    \left[\sign(A(z)-A(u_j))-\sg{j-1}{A(z)}\right]f'(z)\dz \right| \\
  &\qquad +\left|\int_{u_j}^{u_{j+1}} \left[\sign(A(z)-A(u_j)) -
      \sg{j}{A(z)}\right]f'(z)\dz\right| \\ &\le 2
  \norm{f'}_{L^\infty} \left(\abs{u_j-u_{j-1}} +
    \abs{u_{j+1}-u_j}\right).
\end{align*}
Concerning $\Phi_j^\varepsilon$ we use substitution and the
explicit expression given in Lemma~\ref{lemma:LimitOfSigne}. This
leads to
\begin{align*}
  \abs{\lim_{\varepsilon \downto 0}\Phi_j^\varepsilon(u)} & = \left|
    \int_{u_j}^u \left(-\sg{j-1}{A(z)} + 2\,\sign(A(z)-A(u_j)) -
      \sg{j}{A(z)}\right)A'(z)\dz\right| \\ &=
  \left|\int_{A(u_j)}^{A(u)} -\sg{j-1}{\sigma}
    + 2\,\sign(\sigma-A(u_j)) - \sg{j}{\sigma}\,d\sigma\right| \\
  & \le \left|\int_{A(u_j)}^{A(u_{j-1})} \sign(\sigma-A(u_j)) -
    \sg{j-1}{\sigma}\,d\sigma \right| \\ & \qquad +
  \left|\int_{A(u_j)}^{A(u_{j+1})} \sign(\sigma-A(u_j))-
    \sg{j}{\sigma}\,d\sigma\right| \\ & \le \abs{A(u_j)-A(u_{j-1})} +
  \abs{A(u_{j+1})-A(u_j)}.
\end{align*}

Let us return to equation \eqref{eq:DissipErrTermExpresion}.  By the
dominated convergence theorem and the above computations
\begin{align*}
  &\abs{\lim_{\varepsilon \downto 0}T_1^\varepsilon} \le
  \norm{\lim_{\varepsilon\downto 0}\Psi_j^\varepsilon}_{L^\infty}
  \int_{\Pi_T} |\test_s| \dyds \le C\left(\abs{\Dm u_j}+\abs{\Dp
      u_j}\right), \\ & \abs{\lim_{\varepsilon \downto
      0}T_2^\varepsilon} \le \norm{\lim_{\varepsilon \downto
      0}\Xi_j^\varepsilon}_{L^\infty} \int_{\Pi_T}|\test_y|\dyds
  \le C\frac{\Dx}{r}\left(\abs{\Dm u_j}+\abs{\Dp u_j}\right),\\
  &\abs{\lim_{\varepsilon \downto 0} T_3^\varepsilon} \le
  \norm{\lim_{\varepsilon \downto 0}\Phi_j^\varepsilon}_{L^\infty}
  \int_{\Pi_T}|\test_{yy}|\dyds \le C\frac{\Dx}{r^2}\left(\abs{\Dm
      A(u_j)}+\abs{\Dp A(u_j)}\right).
\end{align*}

Hence
\begin{align*}
  & \int_{\Pi_T}\sum_j\liminf_{\varepsilon \downto 0}
  \left(\int_{\Pi_T}\mathcal{R}^\varepsilon_j(t,y,s)\test(x,t,y,s)\dyds\right)
  \car{I_j}(x)\dxdt \\ & \qquad \ge -C\left(\frac{\Dx}{r_0} +
    \frac{\Dx}{r}\right) \int_{\Pi_T} \abs{\Dm u_\Dx}+\abs{\Dp u_\Dx}
  \dxdt \\ & \qquad \qquad -C\frac{\Dx}{r^2}\int_{\Pi_T}\abs{\Dm
    A(u_\Dx)}+\abs{\Dp A(u_\Dx)} \dxdt.
\end{align*}
The desired estimate now follows from the uniform bounds in 
Lemma~\ref{SemiDiscProp}.
\end{proof}

\subsection{Proof of Theorem~\ref{MainResultSemiDisc}}
Let us now combine the previous results to conclude the proof of
Theorem~\ref{MainResultSemiDisc}.  We begin by stating a rather
standard lemma.
\begin{lemma}\label{lem:kappa}
Set
$$
\kappa(t) := \int_{\R}\int_{\Pi_T}
\abs{u_\Dx(x,t)-u(y,s)}\omega_r(x-y)\rho_{r_0}(t-s) \,dydsdx.
$$
Let $t \ge r_0$, and denote by $L_c$ the Lipschitz constant of $t
\mapsto \norm{u(\cdot,t)}_{L^1(\R)}$. Then
$$
\abs{\kappa(t) - \norm{u_\Dx(\cdot,t)-u(\cdot,t)}_{L^1(\R)}} \le
\abs{u(\cdot,t)}_{BV(\R)}r + L_cr_0.
$$
\end{lemma}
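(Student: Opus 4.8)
The plan is to read $\kappa(t)$ as a space--time mollification of $\norm{u_\Dx(\cdot,t)-u(\cdot,t)}_{L^1(\R)}$ and to peel off the two mollifications one at a time, \emph{first in time and then in space}. The starting point is that both mollifiers have unit mass: $\int_\R\omega_r(x-y)\,dy=1$, and, since $t\ge r_0$ places the support of $s\mapsto\rho_{r_0}(t-s)$ inside $(0,\infty)$ (and, in the range of $t$ where the lemma is applied, inside $(0,T)$), also $\int_0^T\rho_{r_0}(t-s)\,ds=1$. Hence I may insert these factors for free into $\norm{u_\Dx(\cdot,t)-u(\cdot,t)}_{L^1(\R)}$ and rewrite $\kappa(t)-\norm{u_\Dx(\cdot,t)-u(\cdot,t)}_{L^1(\R)}$ as a single integral of $\abs{u_\Dx(x,t)-u(y,s)}-\abs{u_\Dx(x,t)-u(x,t)}$ against $\omega_r(x-y)\rho_{r_0}(t-s)$.

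Next I introduce the intermediate quantity
\[
\tilde\kappa(t):=\int_\R\int_{\Pi_T}\abs{u_\Dx(x,t)-u(y,t)}\,\omega_r(x-y)\rho_{r_0}(t-s)\,dydsdx ,
\]
in which the exact solution is frozen at the time $t$. Peeling off the time mollification means bounding $\abs{\kappa(t)-\tilde\kappa(t)}$. By the reverse triangle inequality $\bigl|\abs{a-b}-\abs{a-c}\bigr|\le\abs{b-c}$ the integrand is controlled by $\abs{u(y,s)-u(y,t)}$; integrating out $x$ (unit mass) leaves $\int_0^T\rho_{r_0}(t-s)\norm{u(\cdot,s)-u(\cdot,t)}_{L^1(\R)}\,ds$. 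Here I invoke the Lipschitz continuity in time of $u$ as an $L^1(\R)$-valued map, with constant $L_c$, which passes to the limit $\eta\to 0$ from Lemma~\ref{LipschitzContTime}; since $\abs{s-t}\le r_0$ on the support of $\rho_{r_0}(t-\cdot)$, this contribution is at most $L_c r_0$.

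It remains to compare $\tilde\kappa(t)$ with $\norm{u_\Dx(\cdot,t)-u(\cdot,t)}_{L^1(\R)}$, which is now a purely spatial estimate at the fixed time $t$; this is precisely why doing the temporal reduction first is convenient, since the total-variation seminorm then appears exactly at time $t$ rather than at the nearby times $s$. Integrating out $s$ in $\tilde\kappa(t)$ and inserting $\int_\R\omega_r(x-y)\,dy=1$ into the $L^1$ norm, the difference equals $\int_\R\int_\R\bigl(\abs{u_\Dx(x,t)-u(y,t)}-\abs{u_\Dx(x,t)-u(x,t)}\bigr)\omega_r(x-y)\,dydx$, whose modulus is at most $\int_\R\int_\R\abs{u(y,t)-u(x,t)}\,\omega_r(x-y)\,dydx$ by the same reverse triangle inequality. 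Changing variables to $z=x-y$ and using the standard $BV$ modulus-of-continuity bound $\int_\R\abs{u(x-z,t)-u(x,t)}\,dx\le\abs{z}\,\abs{u(\cdot,t)}_{BV(\R)}$ together with $\abs{z}\le r$ on $\mathrm{supp}\,\omega_r$, this contributes at most $\abs{u(\cdot,t)}_{BV(\R)}\,r$. Adding the two bounds gives the claim.

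The calculation is almost entirely mechanical (reverse triangle inequalities plus a $BV$ modulus-of-continuity estimate), so the only points requiring care are organizational: choosing to reduce in time before space, so that $\abs{\,\cdot\,}_{BV(\R)}$ is evaluated at $t$; and reading $L_c$ as the Lipschitz constant of $t\mapsto u(\cdot,t)$ \emph{into} $L^1(\R)$ (the bound $\abs{f(u^0)-A(u^0)_x}_{BV(\R)}$ of Lemma~\ref{LipschitzContTime}), since the Lipschitz constant of the real-valued map $t\mapsto\norm{u(\cdot,t)}_{L^1}$ only controls $\bigl|\norm{u(\cdot,s)}_{L^1}-\norm{u(\cdot,t)}_{L^1}\bigr|$, not the quantity $\norm{u(\cdot,s)-u(\cdot,t)}_{L^1}$ we actually need.
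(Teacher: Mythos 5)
Your proof is correct and follows essentially the same route as the paper: a reverse triangle inequality followed by splitting $\abs{u(y,s)-u(x,t)}$ at the intermediate value $u(y,t)$, with the time part controlled by the $L^1$-Lipschitz continuity in time (giving $L_c r_0$) and the space part by the $BV$ modulus of continuity (giving $\abs{u(\cdot,t)}_{BV(\R)}\,r$); your explicit intermediate quantity $\tilde\kappa$ is just a reorganization of the paper's single chain of inequalities. Your closing remark that $L_c$ must be read as the Lipschitz constant of $t\mapsto u(\cdot,t)$ \emph{into} $L^1(\R)$ (as in Lemma~\ref{LipschitzContTime}), rather than of the scalar map $t\mapsto\norm{u(\cdot,t)}_{L^1}$, is a correct and worthwhile clarification of a slight imprecision in the lemma's wording, and it matches how the paper's own proof actually uses $L_c$.
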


\begin{proof}
  By the reverse triangle inequality,
  \begin{align*}
    &\abs{\kappa(t) - \norm{u_\Dx(\cdot,t)-u(\cdot,t)}_{L^1(\R)}} \\ &
    \qquad \le \!\int_\R\int_{\Pi_T}\!\!
    \abs{u(y,s)-u(x,t)}\omega_r(x-y)\rho_{r_0}(t-s)\dyds dx \\
    &\qquad \le \int_0^T \left(\int_{\R} |u(y,s)-u(t,y)|\,dy\right)
    \rho_{r_0}(t-s)\,ds \\
    &\qquad \qquad + \int_\R\int_\R|u(t,y)-u(x,t)|\omega_r(x-y)dydx \\
    & \qquad \le L_cr_0 + \abs{u(\cdot,t)}_{BV(\R)}r.
  \end{align*}
\end{proof}

\begin{proof}[Proof of Theorem~\ref{MainResultSemiDisc}]%\label{proof:SemidiscThm}
Our starting point is Lemma~\ref{lemma:MainLemmaTestFunc}.  
Let $A(\sigma) = \hat{A}(\sigma) + \eta \sigma$, where 
$\hat{A}$ is the original degenerate diffusion function. Let
  \begin{align*}
    \Xi &= \intPi \sgn{u_\Dx-u} \left(f(u_\Dx)-f(u)\right)
    \left(\Dp\test +\test_{y}\right)\dX \\ & \qquad + \intPi
    \left(\int_{u_\Dx}^{S_\Dx u_{\Dx}} \sgn{z-u}F_2'(z)\,dz\right)
    \,\Dp\test \dX \\ &\qquad + \intPi \abs{A(u_\Dx)-A(u)}
    \left(\Dm\Dp\test + (\Dp + \Dm)\test_{y} + \test_{yy}\right) \dX.
  \end{align*}
  By Estimate~\ref{Beta2Est}, Estimate~\ref{DoubleDerEst}, and
  Estimate~\ref{gamma3Est}, it follows that
  \begin{equation}\label{eq:E1Def}
    \abs{\Xi} \le C\frac{\Dx}{r}\left(1 + \frac{\Dx}{r^2}\right)\left(1 + \frac{\Dx}{r}\right) 
    =:E_1.
  \end{equation}

  Furthermore, by Lemma~\ref{lemma:FatousApplicationToDissip},
  Lemma~\ref{lemma:LowerBoundSquareTerm},
  Estimate~\ref{estimate:DissTermSchemeErr}, and
  Estimate~\ref{est:Re}, it follows that
  \begin{equation}\label{eq:E2def}
    \liminf_{\varepsilon \downto 0} \intPi \Ee_\Dx \test \dX 
    \ge -C(1 + r + \Dx)\frac{\Dx}{r^2}
    \left(1+\frac{\Dx}{r}\right)^3-C\frac{\Dx}{r_0} =:-E_2.
  \end{equation}

  Applying the estimates \eqref{eq:E1Def} and \eqref{eq:E2def}, the
  inequality \eqref{MainIneq2} becomes
  \begin{align*}
    & \intPi
    \abs{u_\Dx-u}\rho_{\alpha}(t-\tau)\omega_r(x-y)\rho_{r_0}(t-s)\dX
    \\ & \qquad \le \intPi
    \abs{u_\Dx-u}\rho_{\alpha}(t-\nu)\omega_r(x-y)\rho_{r_0}(t-s) \dX
    + E_1 + E_2.
  \end{align*}
  Note that both $E_1$ and $E_2$ are independent of $\alpha$. Thus, we
  can send $\alpha$ to zero, arriving at
$$
\kappa(\tau)\le \kappa(\nu) + E_1 + E_2,
$$
where $\kappa$ is defined as in Lemma~\ref{lem:kappa}.
By Lemma~\ref{lem:kappa} it follows that
\begin{align*}
  & \norm{u_\Dx(\cdot,\tau)-u(\cdot,\tau)}_{L^1(\R)} \\
  & \qquad \le \norm{u_\Dx(\cdot,\nu)-u(\cdot,\nu)}_{L^1(\R)} +
  2\left(L_c r_0 + \abs{u^0}_{BV(\R)}r\right) + E_1 + E_2.
\end{align*}
Recall that we had to pick $\nu > r_0$.  Denote by $L_d$ the $L^1$
Lipschitz constant of $t \mapsto u_\Dx(\cdot,t)$. By the triangle
inequality
\begin{align*}
  &\norm{u_\Dx(\cdot,\nu)-u(\cdot,\nu)}_{L^1(\R)} \\ & \qquad \le
  \norm{u_\Dx(\cdot,\nu)-u_\Dx^0}_{L^1(\R)} +
  \norm{u_\Dx^0-u^0}_{L^1(\R)} + \norm{u^0-u(\cdot,\nu)}_{L^1(\R)} \\
  &\qquad \le L_d\nu + \norm{u_\Dx^0-u^0}_{L^1(\R)} + L_c\nu.
\end{align*}
This means that
\begin{align*}
  \norm{u_\Dx(\cdot,\tau)-u(\cdot,\tau)}_{L^1(\R)} & \le \norm{u^0_\Dx
    - u^0}_{L^1(\R)} + \left(L_c+L_d\right)\nu \\ & \qquad +
  2\left(L_c r_0 + \abs{u^0}_{BV(\R)}r\right) + E_1 + E_2.
\end{align*}
Choose $r^3 = r_0^2 = \Dx$ and $\nu = 2r_0$. 
Then there exists a constant $C$ such that
\begin{equation*}\label{eq:MainResult}
  \norm{u_\Dx(\cdot,\tau)-u(\cdot,\tau)}_{L^1(\R)} 
  \le \norm{u^0_\Dx-u^0}+C \Dx^{\frac13}.
\end{equation*}

Now recall that $A(\sigma) = \hat{A}(\sigma) + \eta \sigma$ and so we 
need to send $\eta$ to zero to finish the proof. If $u_\eta$ is the
classical solution of the regularized equation and $u$ is the entropy
solution of the non-regularized equation, then it is well known that
$u_\eta(\cdot,t) \rightarrow u(\cdot,t)$ in $L^1(\R)$ as $\eta
\rightarrow 0$ (see section \ref{sec:prelim}).  Concerning the scheme
one may prove continuous dependence in $\ell^1$ on $\eta$ using
Gronwall's inequality.  Hence, we can also send $\eta$ to zero in the
scheme.  This finishes the proof of Theorem~\ref{MainResultSemiDisc}.
\end{proof}

\section{Implicit difference schemes}\label{sec: error estimate
  implicit}

In this section we show that the arguments presented in the previous
sections carry through for implicit schemes. Fix a time step $\Dt >
0$.  We consider implicit difference schemes of the form
\begin{equation}\label{DiscEqu}
  \Dmt u^n_j+ \Dm F(u_j^{n},u_{j+1}^{n}) 
  = \Dm\Dp A(u_j^{n}) \qquad n \ge 1,\ j \in \Z,
\end{equation}
where
\begin{equation*}
  \Dmt u^n_j = \frac{u^n_j-u^{n-1}_j}{\Dt}.
\end{equation*}
Let $t_n = n\Dt$ and $x_j = j\Dx$. We define the grid cells
\begin{equation*}
  I_j^n = [x_{j-1/2},x_{j+1/2}) \times (t_{n-1},t_{n}] \quad
  \text{for  $n \ge 0$ and $j \in \Z$.} 
\end{equation*}
The piecewise constant approximation is defined for all $(x,t) \in \R
\times (-\Dt,T]$ by
\begin{equation}\label{eq:uDeltaDef}
  u_\Delta(x,t) = u_j^n \, \mbox{ for } (x,t) \in I_j^n.
\end{equation}
The domain is chosen so that $\Dmt u_\Delta$ is defined for all $(x,t)
\in \R \times (0,T)$. For the existence of a unique solution $u^n_j$
to the nonlinear equation \eqref{DiscEqu} and the convergence of
$u_\Delta$ to an entropy solution, see \cite{Evje:1999et}.

We now state the main theorem.

\begin{theorem}\label{thm:discreterateImpl}
  Let $u$ be the entropy solution to \eqref{MainProblem}, and let
  $u_\Delta$ be defined via $u^n_j$ by \eqref{eq:uDeltaDef}, where
  $u^n_j$ solves \eqref{DiscEqu}.  If $u^0$ satisfies the same
  assumptions as in Theorem~\ref{MainResultSemiDisc}, then for all
  sufficiently small $\Dx$ and $\Dt$, and for all $n \in \N$ such that
  $t_n\in [0,T]$,
  \begin{equation*}%\label{eq:MainResultDisc}
    \norm{u_\Delta(\cdot,t_n)-u(\cdot,t_n)}_{L^1(\R)} 
    \le \norm{u^0_\Delta - u^0}_{L^1(\R)} + C\left(\Dx^{1/3}
      +\Dt^{1/2}\right),
  \end{equation*}
  where the constant $C_T$ depends on $u_0,A,f,T$, but not on
  $\Dx,\Dt$.
\end{theorem}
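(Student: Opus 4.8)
The plan is to replay the proof of Theorem~\ref{MainResultSemiDisc} essentially line by line, the only structural change being that the continuous time derivative is replaced by the backward difference $\Dmt$ and that the sum over the temporal grid takes the place of the $t$-integral for the scheme. The continuous-solution computations (Lemma~\ref{lemma:EntropyCalcCont} and Lemma~\ref{lemma:MixedTerm}) involve only the exact solution $u=u(y,s)$ and are therefore untouched. The first genuine modification is the discrete entropy inequality. Multiplying \eqref{DiscEqu} by $\psie'(u_j^n,c)$ and using that $\sigma\mapsto\psie(\sigma,c)$ is convex (because $\signe$ is nondecreasing), so that
\[
  \psie'(u_j^n,c)\,\Dmt u_j^n \ge \Dmt\psie(u_j^n,c),
\]
I obtain the exact analogue of Lemma~\ref{lemma: EntropyCalcDiscrete} with $\partial_t\psie$ replaced by $\Dmt\psie$ and with the inequality pointing the right way (the convexity defect is nonnegative and is discarded on the favorable side). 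The spatial error term and all of its subsequent treatment (Lemma~\ref{lemma:DoubelingLemma} through Estimate~\ref{est:Re}) act at a fixed time level and so carry over verbatim; in particular the spatial half of the argument reproduces the error $\order{\Dx^{1/3}}$ after the choice $r=\Dx^{1/3}$, the $r_0$-dependent contributions being collected with the temporal terms below.

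Next I carry out the doubling of variables exactly as in Lemma~\ref{lemma:MainLemmaTestFunc}, multiply by the test function $\test$, integrate in $(x,y,s)$, and sum over the temporal grid. Summation by parts in the discrete time variable converts $\Dmt\psie(u_\Delta,u)$ into $\psie(u_\Delta,u)\,\Dpt\test$, so after sending $\eps\downto 0$ the combined temporal contribution becomes the analogue of $\intPi\abs{u_\Delta-u}\bigl(\Dpt\test+\test_s\bigr)\dX$. Writing $\Dpt\test+\test_s = (\test_t+\test_s)+(\Dpt\test-\test_t)$, the first piece equals $(\psi^\alpha)'(t)\,\omega_r\,\rho_{r_0}(t-s)$ and reproduces the $\kappa(\tau)-\kappa(\nu)$ mechanism, while the second piece is the new time-discretization error.

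Controlling $\intPi\abs{u_\Delta-u}\,(\Dpt\test-\test_t)\dX$ is the main obstacle, because the naive bound $\abs{\Dpt\test-\test_t}\le\Dt\sup\abs{\test_{tt}}$ costs two factors of $r_0^{-1}$ (through $\rho_{r_0}''$) and would yield only the suboptimal rate $\Dt^{1/3}$. To recover the sharp $\Dt^{1/2}$ I exploit that $\partial_t^2\rho_{r_0}(t-s)=\partial_s^2\rho_{r_0}(t-s)$ and integrate by parts once in $s$, transferring one derivative onto $u(y,s)$; the uniform time-regularity of the exact solution (Lemma~\ref{LipschitzContTime}) then absorbs $\partial_s u$ in $L^1$ and leaves only a single $r_0^{-1}$, so that this error is $\order{\Dt/r_0}$. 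The $(\psi^\alpha)'$ and $(\psi^\alpha)''$ contributions are disposed of by sending $\alpha\downto 0$, exactly as in the semidiscrete proof.

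Finally I adapt Lemma~\ref{lem:kappa} to the piecewise-constant-in-time approximation \eqref{eq:uDeltaDef}; this requires the $\ell^1$ Lipschitz continuity in time of $u_\Delta$ (the implicit analogue of the flux-difference bounds of Lemma~\ref{SemiDiscFluxDiffBounds}, available from \cite{Evje:1999et}), which supplies the constant $L_d$ in the term $(L_c+L_d)\nu$. Collecting the spatial error $\order{\Dx^{1/3}}$, the time-discretization error $\order{\Dt/r_0}$, and the mollification errors $\order{r+L_c r_0}$, I choose $r=\Dx^{1/3}$, $r_0=(\Dx+\Dt)^{1/2}$ and $\nu=2r_0$. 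Then $\Dx/r_0\le\Dx^{1/2}$ and $\Dt/r_0\le\Dt^{1/2}$, while $L_c r_0\le C(\Dx^{1/2}+\Dt^{1/2})$, so every error is dominated by $C(\Dx^{1/3}+\Dt^{1/2})$. Sending $\eta\downto 0$ in both the regularized equation and the scheme, exactly as in the proof of Theorem~\ref{MainResultSemiDisc}, completes the argument.
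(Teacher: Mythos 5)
Your overall strategy coincides with the paper's: the discrete entropy inequality obtained from convexity of $\sigma\mapsto\psie(\sigma,c)$ (the paper phrases this through Lemma~\ref{lemma:ChainRule2} with $g(z)=z$, which is the same computation), the verbatim reuse of the spatial estimates at each fixed time level, the $\order{\Dt/r_0}$ bound on the temporal consistency error obtained by Taylor-expanding $\rho_{r_0}$ and integrating by parts once in $s$ against $u_s\in L^1$, the $\ell^1$ time-Lipschitz continuity of the implicit scheme from \cite{Evje:1999et} feeding into Lemma~\ref{lem:kappa}, and the final choice $r^3=\Dx$, $r_0\sim(\Dx+\Dt)^{1/2}$ yielding $\Dx^{1/3}+\Dt^{1/2}$.

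There is, however, one genuine gap: your handling of the temporal cutoff. You retain the mollified $\psi^{\alpha}$, decompose $\Dpt\test+\test_s=(\test_t+\test_s)+(\Dpt\test-\test_t)$, and assert that the $(\psi^{\alpha})'$ and $(\psi^{\alpha})''$ contributions to the remainder ``are disposed of by sending $\alpha\downto 0$''. They are not: the Taylor remainder $\Dpt\test-\test_t$ contains the piece $\tfrac{1}{\Dt}\int_t^{t+\Dt}(t+\Dt-w)\,(\psi^{\alpha})''(w)\,\rho_{r_0}(w-s)\,dw\,\omega_r$, whose $L^1_t$-norm is of order $\Dt\int\abs{(\psi^{\alpha})''}\,dw\sim\Dt/\alpha$, which diverges as $\alpha\downto 0$ under your stated bound. (Its actual limit is a nonzero boundary contribution of order $\Dt$ concentrated near $t=\nu,\tau$, controllable by the time-Lipschitz constants of $u$ and $u_\Delta$, but that requires an argument you do not supply.) The paper sidesteps this entirely by taking $\psi=\car{[\nu,\tau)}$ sharp, with $\nu,\tau$ on the time grid, so that $\Dpt\psi$ is exactly $\delta_\Dt^-(t-\nu)-\delta_\Dt^-(t-\tau)$ and the discrete Leibniz decomposition of $\Dpt(\psi\rho_{r_0})$ produces only first differences of $\psi$, which enter either with a compensating factor $\Dt$ (Estimate~\ref{est:TimeDiffTwo}) or not at all (Estimate~\ref{est:TimeDiff}); no second difference of the cutoff ever appears. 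You should either adopt that device or carry out the missing limit analysis for the $(\psi^{\alpha})''$ terms; with that repair the rest of your argument closes.
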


To prove this theorem we will follow step-by-step the proof of
Theorem~\ref{MainResultSemiDisc} and present the details whenever
there is a significant difference between the two cases.

Thanks to \cite[Lemma~2.4]{Evje:1999et}, we have the following $L^1$
Lipschitz continuity result:

\begin{lemma}\label{lem:L1TimeLipDisc}
  Let $m$ and $n$ be two non-negative integers. Then
  \begin{equation*}
    \norm{u_\Delta(\cdot,t_n)-u_\Delta(\cdot,t_m)}_{L^1(\R)} 
    \le L_d \abs{t_n-t_m},
  \end{equation*}
  where $L_d = \abs{F(u_j^0,u_{j+1}^0)-\Dp A(u_j^0)}_{BV}$.
\end{lemma}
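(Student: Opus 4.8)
The plan is to reduce the time-Lipschitz estimate to a single time step and to identify the one-step $L^1$ increment with the $BV$ seminorm of the discrete flux difference, which the implicit scheme keeps non-increasing.

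Since $u_\Delta$ is constant in $t$ on each interval $(t_{k-1},t_k]$ by \eqref{eq:uDeltaDef}, the triangle inequality reduces matters to the one-step bound $\norm{u^k-u^{k-1}}_1$ followed by a telescoping sum over $m<k\le n$ (the case $m>n$ being symmetric). To compute a single step, set
$$
w_j^n := F(u_j^n,u_{j+1}^n)-\Dp A(u_j^n),
$$
so that \eqref{DiscEqu} reads $\Dmt u_j^n = -\Dm w_j^n$, i.e.\ $u_j^n-u_j^{n-1} = -\Dt\,\Dm w_j^n$. Consequently,
$$
\norm{u^n-u^{n-1}}_1 = \Dx\sum_j\abs{u_j^n-u_j^{n-1}} = \Dt\sum_j\abs{w_j^n-w_{j-1}^n} = \Dt\,\abs{w^n}_{BV}.
$$

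The crux is therefore to show that the flux-difference seminorm does not grow, $\abs{w^n}_{BV}\le\abs{w^{n-1}}_{BV}$ for every $n\ge1$, so that by induction $\abs{w^n}_{BV}\le\abs{w^0}_{BV}=L_d$. This is the fully implicit analogue of \eqref{FluxDiffBoundBV} and is exactly \cite[Lemma~2.4]{Evje:1999et}. I would prove it by introducing $v_j^n := -\Dm w_j^n$ (which coincides with $\Dmt u_j^n$ for $n\ge1$), so that $\norm{v^n}_1 = \abs{w^n}_{BV}$ and it suffices to establish the $\ell^1$ contraction $\norm{v^n}_1\le\norm{v^{n-1}}_1$. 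Differencing \eqref{DiscEqu} between levels $n$ and $n-1$ and expanding the increments of $F$ and $A$ through mean-value coefficients $b_j=F_u(\cdot,\cdot)\ge0$, $c_j=F_v(\cdot,\cdot)\le0$, and $a_j=A'(\cdot)\ge0$ (the signs being forced by the monotonicity of $F$ and the fact that $A$ is nondecreasing), one finds that $v^n$ satisfies the implicit linear relation
$$
v_j^n - v_j^{n-1} + \Dt\,\Dm\bigl(b_j v_j^n + c_j v_{j+1}^n\bigr) = \Dt\,\Dm\Dp\bigl(a_j v_j^n\bigr),
$$
valid for all $n\ge1$ with the convention $v_j^0:=-\Dm w_j^0$. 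This is the implicit counterpart of \eqref{VDerExpression}. Multiplying by $\Dx\,\sign(v_j^n)$, summing over $j$, and using summation by parts together with the sign structure $b_j\ge0$, $c_j\le0$, $a_j\ge0$ yields that the spatial operator contributes a nonnegative term, whence $\norm{v^n}_1\le\norm{v^{n-1}}_1$.

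Finally I would assemble the pieces: for $m<n$,
$$
\norm{u_\Delta(\cdot,t_n)-u_\Delta(\cdot,t_m)}_{L^1(\R)} \le \sum_{k=m+1}^n\norm{u^k-u^{k-1}}_1 = \Dt\sum_{k=m+1}^n\abs{w^k}_{BV} \le (n-m)\Dt\,L_d = \abs{t_n-t_m}\,L_d.
$$
The only genuine obstacle is the non-increase of $\abs{w^n}_{BV}$ under the implicit step: because the update is implicit, the sign/maximum-principle argument of Lemma~\ref{SemiDiscFluxDiffBounds} cannot be run by differentiating in time but must instead be closed at the unknown level $n$. This is handled by the dissipativity of the linearized implicit operator (equivalently, its $M$-matrix structure), which is precisely where the monotonicity of $F$ and the monotonicity of $A$ enter.
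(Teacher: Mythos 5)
Your proof is correct, but it is worth noting that the paper does not actually prove this lemma: it simply invokes \cite[Lemma~2.4]{Evje:1999et}, so you have supplied a self-contained argument where the authors defer to a reference. Your reconstruction is the natural one and is exactly the implicit-in-time counterpart of the paper's own Lemma~\ref{SemiDiscFluxDiffBounds}: there the authors control $v_j=\Dp A(u_j)-F(u_j,u_{j+1})$ by differentiating in $t$ and running a maximum-principle/sign argument, whereas you must close the estimate at the unknown level $n$, which you correctly do via the $\ell^1$ contractivity of the linearized implicit operator. The key computations all check out: the identity $u_j^n-u_j^{n-1}=-\Dt\,\Dm w_j^n$ gives $\norm{u^n-u^{n-1}}_1=\Dt\,\abs{w^n}_{BV}$; differencing two consecutive time levels and expanding $F$ and $A$ by the mean value theorem (splitting the two-variable increment of $F$ into its two partial increments) yields your implicit linear relation with $b_j\ge 0$, $c_j\le 0$, $a_j\ge 0$; and multiplying by $\sgn{v_j^n}$, summing, and reindexing shows the spatial operator contributes a nonnegative amount because the off-diagonal coefficients are nonpositive while the column sums equal one, whence $\norm{v^n}_1\le\norm{v^{n-1}}_1$. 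The only points you gloss over are routine: the rearrangement of the infinite sums requires $v^n\in\ell^1$ (which holds since $u^n\in\ell^1$ for $n\ge 1$ and $\abs{w^0}_{BV}=L_d<\infty$ by the standing assumptions on $u^0$), and the existence and $\ell^1$/BV bounds for the implicit solution $u^n$ are themselves taken from \cite{Evje:1999et}, consistent with the paper. What your approach buys is transparency about where monotonicity of $F$ and of $A$ enter; what the paper's citation buys is brevity.
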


Next, let us prove an implicit version of Lemma~\ref{lemma:
  EntropyCalcDiscrete}.
\begin{lemma}\label{lemma: EntropyCalcDiscreteImpl}
  Let $u_j^n$ be the solution to \eqref{DiscEqu}.  Then for all $c \in
  \R$,
  \begin{align*}%\label{SemiDiscEqu1}
    & \Dmt\psie(u_j^n,c) + \Dm Q^c(u_j^n,u_{j+1}^n) -\Dm\Dp
    \abseps{A(u_j^n)-A(c)} \\ & \qquad \le
    -\frac{1}{(\Dx)^2}\int^{u_j^n}_{u_{j+1}^n}
    \psie''(z,c)(A(z)-A(u_{j+1}^n))\dz \\ & \quad \qquad
    -\frac{1}{(\Dx)^2}\int^{u_j^n}_{u_{j-1}^n}
    \psie''(z,c)(A(z)-A(u_{j-1}^n))\dz,
  \end{align*}
  where $Q^c(u,v)$ is defined in Lemma~\ref{lemma:
    EntropyCalcDiscrete}.
\end{lemma}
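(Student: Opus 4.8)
The plan is to follow the proof of Lemma~\ref{lemma: EntropyCalcDiscrete} line for line, the only structural change being that the chain-rule identity $\partial_t\psie(u_j,c)=\psie'(u_j,c)\partial_t u_j$, which is unavailable for the backward difference quotient $\Dmt$, is replaced by a convexity inequality. First I would multiply the implicit scheme \eqref{DiscEqu} by $\psie'(u_j^n,c)=\signe(A(u_j^n)-A(c))$ to obtain
\begin{equation*}
  \psie'(u_j^n,c)\Dmt u_j^n + \psie'(u_j^n,c)\Dm F(u_j^n,u_{j+1}^n)
  = \psie'(u_j^n,c)\Dm\Dp A(u_j^n).
\end{equation*}

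The key new ingredient is that $u\mapsto\psie(u,c)$ is convex: since $A$ is nondecreasing and $\signe$ is nondecreasing, its first derivative $\psie'(\cdot,c)=\signe(A(\cdot)-A(c))$ is nondecreasing. The supporting-line inequality $\psie(u_j^{n-1},c)\ge\psie(u_j^n,c)+\psie'(u_j^n,c)(u_j^{n-1}-u_j^n)$, after rearranging and dividing by $\Dt>0$, yields
\begin{equation*}
  \Dmt\psie(u_j^n,c)\le \psie'(u_j^n,c)\Dmt u_j^n.
\end{equation*}
This is precisely where the equality of the semi-discrete argument becomes the inequality asserted in the lemma, and the extra inequality points in the favorable direction.

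The two remaining terms are purely spatial and involve no time differentiation, so they are processed exactly as in Lemma~\ref{lemma: EntropyCalcDiscrete}, with $u_j$ replaced by $u_j^n$. Writing $F=F_1+F_2$ and applying Lemma~\ref{lemma:ChainRule2} once with $g=F_1$ and once with $g=F_2$ rewrites $\psie'(u_j^n,c)\Dm F(u_j^n,u_{j+1}^n)$ as $\Dm Q^c(u_j^n,u_{j+1}^n)$ plus two convective remainder integrals, while applying Lemma~\ref{lemma:ChainRule2} twice with $g=A$ rewrites $\psie'(u_j^n,c)\Dm\Dp A(u_j^n)$ as $\Dm\Dp\abseps{A(u_j^n)-A(c)}$ plus exactly the two diffusion integrals on the right-hand side of the claim. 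Combining these with the convexity bound above and discarding the convective remainders --- which are nonpositive, since $\psie''\ge 0$ while $F_1'\ge 0$ and $F_2'\le 0$ by monotonicity of $F$ --- produces the stated inequality. The only genuinely new point, and hence the only place care is needed, is the time term, resolved by the convexity step; every spatial computation is identical to that of Lemma~\ref{lemma: EntropyCalcDiscrete}.
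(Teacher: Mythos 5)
Your proof is correct and takes essentially the same route as the paper: the paper handles the time term by applying Lemma~\ref{lemma:ChainRule2} with $g(\sigma)=\sigma$, $a=u_j^n$, $b=u_j^{n-1}$ and noting that the resulting remainder integral is nonpositive, which is exactly the integral form of your supporting-line convexity inequality $\Dmt\psie(u_j^n,c)\le\psie'(u_j^n,c)\Dmt u_j^n$. The spatial terms are then processed verbatim as in Lemma~\ref{lemma: EntropyCalcDiscrete}, just as you describe.
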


\begin{proof}
  From \eqref{DiscEqu} it follows that
  \begin{equation*}
    \psie'(u_j^n,c)\Dmt u^n_j+ \psie'(u_j^n,c)\Dm F(u_j^{n},u_{j+1}^{n}) 
    = \psie'(u_j^n,c)\Dm\Dp A(u_j^{n}).
  \end{equation*}
  Apply Lemma~\ref{lemma:ChainRule2} with $g(\sigma) = \sigma$,
  $a=u_j^n$, and $b=u_j^{n-1}$ to obtain
  \begin{align*}
    \psie'(u_j^n,c)\Dmt u^n_j &= \Dmt \psie(u_j^n,c) - \frac{1}{\Dt}
    \int^{u_j^{n-1}}_{u_j^n} \psie''(z,c)(z-u_j^{n-1})\dz \\ & \ge
    \Dmt \psie(u_j^n,c).
  \end{align*}
  The remaining part of the proof follows exactly as in the proof of
  Lemma~\ref{lemma: EntropyCalcDiscrete}.
\end{proof}

Let us define the time shift operator
$$
S^t_\Dt \sigma(t) = \sigma(t + \Dt),
$$
for any function $\sigma = \sigma(t)$.

\begin{lemma}\label{lemma:MainLemmaTestFuncImpl}
  Suppose $A' > 0$. Let $u_\Delta = u_\Delta(x,t)$ be defined by
  \eqref{eq:uDeltaDef}, and let $u=u(y,s)$ be the classical solution
  of \eqref{MainProblem}.  Let $\psi(t) := \car{[\nu,\tau)}(t)$ and
  define
$$
\test(x,t,y,s) = \psi(t)\omega_r(x-y)\rho_{r_0}(t-s),
$$
where $\omega_r,\rho_{r_0},\nu,\tau$ are chosen as in
Lemma~\ref{lemma:MainLemmaTestFunc}.  Then
\begin{align*}\label{MainIneq2Impl}
  & \intPi \abs{u_\Delta-u}\delta_\Dt^-(t-\nu)\omega_r\rho_{r_0} \dX  \\
  & \qquad + \intPi \abs{u_\Delta-u}
  S^t_\Dt\psi \omega_r (\Dpt \rho_{r_0} - \partial_t\rho_{r_0})\dX \\
  &\qquad + \Dt \intPi \abs{u_\Delta-u}\Dpt \psi \omega_r \partial_s \rho_{r_0} \dX \\
  &\qquad +\intPi \sgn{u_\Delta-u}\left(f(u_\Delta)-f(u)\right)
  \left(\Dp\test +\test_{y}\right)\dX \\
  &\qquad +\intPi \left(\int_{u_\Dx}^{S_\Dx u_{\Delta}}
    \sgn{z-u}F_2'(z)\,dz \right)\,\Dp\test \dX \\
  &\qquad +\intPi \abs{A(u_\Delta)-A(u)}
  \left(\Dm\Dp\test + (\Dp + \Dm)\test_{y} + \test_{yy}\right) \dX \\
  &\qquad \qquad \ge \intPi
  \abs{u_\Delta-u}\delta_\Dt^-(t-\tau)\omega_r\rho_{r_0} \dX +
  \liminf_{\varepsilon \downto 0} \intPi \Ee_\Delta \test \dX,
\end{align*}
where
$$
\delta_\Dt^-(t) = \frac{1}{\Dt}\car{[-\Dt,0)}(t),
$$
and $\Ee_\Delta(x,t,y,s) = \Ee[u](u_{j-1}^n,u_j^n,u_{j+1}^n)(y,s)$ for
$(x,t) \in I_j^n$.
\end{lemma}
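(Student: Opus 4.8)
The plan is to mirror, step by step, the proof of Lemma~\ref{lemma:MainLemmaTestFunc}, the only genuinely new ingredient being a discrete-in-time integration by parts. First I would establish the implicit analogue of the doubling-of-variables inequality of Lemma~\ref{lemma:DoubelingLemma}. Choosing $c=u_\Delta$ in Lemma~\ref{lemma:EntropyCalcCont} (which governs the continuous solution $u=u(y,s)$ and supplies the $\partial_s$, $\partial_y$, $\partial_y^2$ terms), choosing $c=u$ in Lemma~\ref{lemma: EntropyCalcDiscreteImpl} (which governs the scheme and supplies the $\Dmt$, $\Dm Q^u$, $\Dm\Dp$ terms), and combining these with the mixed-term identity of Lemma~\ref{lemma:MixedTerm}, I obtain
\begin{multline*}
\Dmt\psie(u_\Delta,u)+\partial_s\psie(u,u_\Delta)+\partial_y\qe(u,u_\Delta)+\Dm Q^u(u_\Delta,S_\Dx u_\Delta)\\
-\bigl(\partial_y^2+\partial_y(\Dm+\Dp)+\Dm\Dp\bigr)\abseps{A(u_\Delta)-A(u)}\le-\Ee_\Delta.
\end{multline*}
This is genuinely an inequality rather than an identity: the backward time difference produces, by the convexity of $\psie(\cdot,c)$, a nonpositive remainder that is simply discarded, which is precisely the step already carried out in the proof of Lemma~\ref{lemma: EntropyCalcDiscreteImpl}.

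Next I would multiply by the nonnegative test function $\test=\psi(t)\omega_r(x-y)\rho_{r_0}(t-s)$, integrate over $\Pi_T^2$, and transfer every operator onto $\test$. For all terms except the first this is verbatim the semi-discrete computation: ordinary integration by parts in $y$ and $s$ and integration by parts for difference quotients in $x$ move the operators onto $\test$, and the boundary contributions vanish because $\test|_{t=0}=\test|_{s=0}\equiv0$ under the stated constraints on $\nu,\tau,r_0$. After flipping the inequality, the flux term, the $F_2'$ term and the $\abs{A(u_\Delta)-A(u)}$ term appear exactly as in \eqref{MainIneq2}.

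The heart of the matter is the first term. Since $u_\Delta$ is piecewise constant in $t$ on the cells $(t_{n-1},t_n]$, Abel summation in the time index (a discrete integration by parts) transfers $\Dmt$ to $-\Dpt$ acting on $\test$, with no boundary contribution because $\test$ is supported away from $t=0$ and $t=T$; that is, $\intPi\Dmt\psie(u_\Delta,u)\test\dX=-\intPi\psie(u_\Delta,u)\Dpt\test\dX$. Writing $\Dpt\test=\omega_r\,\Dpt(\psi\rho_{r_0})$ and applying the discrete Leibniz rule $\Dpt(\psi\rho_{r_0})=(S^t_\Dt\psi)\Dpt\rho_{r_0}+(\Dpt\psi)\rho_{r_0}$ together with the elementary identity $\Dpt\psi=\delta_\Dt^-(\cdot-\nu)-\delta_\Dt^-(\cdot-\tau)$ (valid for $\psi=\car{[\nu,\tau)}$ once $\Dt<\tau-\nu$) isolates the two discrete boundary layers: the $\delta_\Dt^-(t-\nu)$ contribution stays on the left and the $\delta_\Dt^-(t-\tau)$ contribution is moved to the right, producing the first term on each side of the asserted inequality. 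Combining the surviving part $(S^t_\Dt\psi)\Dpt\rho_{r_0}$ with the contribution $\psi\,\partial_s\rho_{r_0}$ coming from the continuous $\partial_s\psie(u,u_\Delta)$ term, recalling $\partial_s\rho_{r_0}=-\partial_t\rho_{r_0}$, and using $S^t_\Dt\psi-\psi=\Dt\,\Dpt\psi$, a short rearrangement splits this into the two remaining correction terms appearing in the statement: the consistency error $S^t_\Dt\psi\,(\Dpt\rho_{r_0}-\partial_t\rho_{r_0})$ and the $\mathcal{O}(\Dt)$ term built from $\Dpt\psi\,\partial_s\rho_{r_0}$.

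Finally I would pass to the limit $\eps\downto0$ exactly as in Lemma~\ref{lemma:MainLemmaTestFunc}: by dominated convergence $\psie(u_\Delta,u)$ and $\psie(u,u_\Delta)$ both tend to $\abs{u_\Delta-u}$, the representation \eqref{eq:NumFluxPres} turns $Q^u(u_\Delta,S_\Dx u_\Delta)$ into the sign-flux plus the $F_2'$ integral, and the dissipation term is treated by taking the $\liminf$ and invoking Fatou's lemma as in Lemma~\ref{lemma:FatousApplicationToDissip}. The main obstacle is the bookkeeping in the discrete-in-time integration by parts: one must carefully track the two $\delta_\Dt^-$ boundary layers and, above all, verify that the discrete Leibniz rule combines with the continuous $\partial_s$ term to yield exactly the stated consistency and $\mathcal{O}(\Dt)$ corrections with the correct signs, since here—unlike the semi-discrete case, where $\test_t+\test_s$ collapsed cleanly—the mismatch between $\Dpt$ and $\partial_t$ leaves genuine residual terms that must be displayed rather than cancelled.
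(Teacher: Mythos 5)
Your proposal is correct and follows essentially the same route as the paper: the implicit entropy inequality of Lemma~\ref{lemma: EntropyCalcDiscreteImpl} (with the convexity remainder from $\psie'(u_j^n,c)\Dmt u_j^n$ discarded) is combined with Lemmas~\ref{lemma:EntropyCalcCont} and \ref{lemma:MixedTerm}, multiplied by $\test$, integrated, and the time term is handled exactly as you describe — summation by parts sends $\Dmt$ to $\Dpt\test$, the discrete Leibniz rule plus $\Dpt\psi=\delta_\Dt^-(\cdot-\nu)-\delta_\Dt^-(\cdot-\tau)$ and $S^t_\Dt\psi-\psi=\Dt\,\Dpt\psi$ isolate the two boundary layers and the two residual correction terms, and $\eps\downto 0$ is passed as in Lemma~\ref{lemma:MainLemmaTestFunc}. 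No gaps.
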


\begin{proof}
  As in Lemma~\ref{lemma:DoubelingLemma}, we obtain by
  Lemma~\ref{lemma: EntropyCalcDiscreteImpl} the inequality
  \begin{align*}
    & \Dmt\psie(u_j^n,u) + \partial_s\psie(u,u_j^n) + \partial_y
    \qe(u,u_j^n) + \Dm Q^u(u_j^n,u_{j+1}^n) \\ & \qquad\quad
    -( \partial_y^2 +\partial_y(\Dm +\Dp) +
    \Dm\Dp)\abseps{A(u_j^n)-A(u)}) \le -\Ee_{j,n},
  \end{align*}
  where $\Ee_{j,n} := \Ee[u](u_{j-1}^n,u_j^n,u_{j+1}^n)$ is defined in
  Lemma~\ref{lemma:DoubelingLemma}. Let us multiply by $\test$ and
  integrate over $\Pi_T^2$. Integration by parts for difference
  quotients and ordinary integration by parts gives
  \begin{align*}
    &\intPi \psie(u_\Delta,u)\Dpt\test + \psie(u,u_\Delta)\test_s \dX
    \\ & \qquad +\intPi \qe(u,u_\Delta)\test_y + Q^u(u_\Delta,S_\Dx
    u_\Delta)\Dp\test \dX \\ & \qquad +\intPi
    \abseps{A(u_\Delta)-A(u)}(\test_{yy} +(\Dm +\Dp)\test_y +
    \Dm\Dp\test) \dX \\ & \qquad\qquad \ge \intPi \Ee_\Delta \test
    \dX.
  \end{align*}
  Consider the first term on the left. Let $\varepsilon$ tend to zero
  as in the proof of Lemma~\ref{lemma:MainLemmaTestFunc}. Using the
  Leibniz rule for difference quotients and adding and subtracting we
  obtain
$$
\Dpt \test = S^t_\Dt \psi \omega_r \Dpt \rho_{r_0} + \Dpt \psi
\omega_r \rho_{r_0}.
$$
Furthermore,
$$
\test_s = -S^t_\Dt\psi \omega_r \partial_t\rho_{r_0} + \Dt\Dpt \psi
\omega_r \partial_s\rho_{r_0}.
$$
Hence,
\begin{align*}%\label{eq:TimeDerSplit}
  & \intPi \abs{u_\Delta-u} \left(\Dpt \test + \test_s\right)\dX \\ &
  \quad = \intPi \abs{u_\Delta-u}S^t_\Dt \psi \omega_r (\Dpt
  \rho_{r_0} - \partial_t\rho_{r_0})\dX \\ & \quad \qquad+ \Dt \intPi
  \abs{u_\Delta-u} \Dpt \psi \omega_r \partial_s \rho_{r_0} \dX
  % \\ & \qquad \qquad
  + \intPi \abs{u_\Delta-u} \Dpt \psi \omega_r \rho_{r_0} \dX.
\end{align*}
Finally, we use that
\begin{equation}\label{eq:psiDiff}
  \Dpt \psi = \delta_\Dt^-(t-\nu) -  \delta_\Dt^-(t-\tau).
\end{equation}
The lemma now follows, as in the proof of
Lemma~\ref{lemma:MainLemmaTestFunc}, by letting $\varepsilon$ tend to
zero.
\end{proof}

Comparing the terms in Lemma~\ref{lemma:MainLemmaTestFunc} with the terms in Lemma~\ref{lemma:MainLemmaTestFuncImpl} we
recognize all but two terms.
\begin{estimate}\label{est:TimeDiff}
$$
\left|\intPi \abs{u-u_\Delta}S^t_\Dt \psi \omega_r (\Dpt \rho_{r_0}
  - \partial_t\rho_{r_0})\dX\right| \le C\frac{\Dt}{r_0}\left(1 +
  \frac{\Dt}{r_0}\right).
$$
\end{estimate}

\begin{proof}
  To show this we use a Taylor expansion:
  \begin{align*}
    & \rho_{r_0}(t + \Dt-s)- \rho_{r_0}(t-s) \\ & \quad
    = \int_t^{t + \Dt} \frac{\partial}{\partial z} \rho_{r_0}(z-s) \dz \\
    & \quad = \frac{\partial}{\partial z} \bigg|_{z = t}
    \rho_{r_0}(z-s)\Dt - \int_t^{t+\Dt} \frac{\partial^2}{\partial
      z^2} \rho_{r_0}(z-s)(z-(t+\Dt))\dz.
  \end{align*}
  It follows that
  \begin{equation}\label{eq:TaylorRhoR0}
    \Dpt \rho_{r_0} - \partial_t\rho_{r_0} 
    =-\frac{1}{\Dt}\int_t^{t+\Dt} \frac{\partial^2}{\partial z^2}
    \rho_{r_0}(z-s)(z-(t+\Dt))\dz. 
  \end{equation}
  Integration by parts yields
  \begin{align*}
    -\frac{1}{\Dt}&\int_t^{t+\Dt}\int_0^T
    \abseps{u-u_\Delta}\frac{\partial^2}{\partial z^2}
    \rho_{r_0}(z-s)(z-(t+\Dt))\,dsdz \\
    &= \frac{1}{\Dt}\int_t^{t+\Dt}\int_0^T
    \abseps{u-u_\Delta}\frac{\partial}{\partial s}
    \frac{\partial}{\partial z} \rho_{r_0}(z-s)(z-(t+\Dt))\,dsdz \\ &=
    -\frac{1}{\Dt}\int_t^{t+\Dt}\int_0^T
    \signe(u-u_\Delta)u_s\frac{\partial}{\partial z}
    \rho_{r_0}(z-s)(z-(t+\Dt))\,dsdz.
  \end{align*}
  Since
$$
\frac{\partial}{\partial z} \rho_{r_0}(z-s) =
\frac{1}{r_0}\frac{\partial}{\partial z}
\rho\left(\frac{z-s}{r_0}\right) = \frac{1}{r_0^2}
\rho'\left(\frac{z-s}{r_0}\right)
$$
and $\rho_{r_0}$ has support in $[-r_0,r_0]$, it follows that
\begin{align*}
  & \frac{1}{\Dt}\abs{\int_t^{t+\Dt}\int_0^T
    \abseps{u(y,s)-u_\Delta(x,t)}\frac{\partial^2}{\partial z^2}
    \rho_{r_0}(z-s)(z-(t+\Dt))\,dsdz} \\
  & \qquad \le \frac{C}{r_0^2\Dt} \int_0^T
  |u_s(y,s)|\int_t^{t+\Dt}\car{|z-s|
    \le r_0}|z-(t+\Dt)|\,dzds \\
  & \qquad \le C\frac{\Dt}{r_0^2} \int_0^T |u_s|\car{|t-s| \le r_0 +
    \Dt}\,ds.
\end{align*}
Multiply the above inequality by $\psi(t)\omega_r(x-y)$ and integrate
in $x,y,t$. From the resulting inequality and \eqref{eq:TaylorRhoR0},
we arrive at the estimate
\begin{align*}
  & \abs{\intPi \abseps{u-u_\Delta} S^t_\Dt \psi \omega_r
    (\Dpt \rho_{r_0} - \partial_t\rho_{r_0})\dX} \\
  & \qquad \le C\frac{\Dt}{r_0^2} \intPi|u_s|S^t_\Dt\psi \omega_r
  \car{|t-s| \le r_0 + \Dt}\dX \le C\frac{\Dt}{r_0}\left(1 +
    \frac{\Dt}{r_0}\right)\|u_s\|_{L^1(\Pi_T)}.
\end{align*}
Since $\|u_s(\cdot,s)\|_{L^1(\R)}$ is uniformly bounded on $[0,T]$,
the estimate follows from the dominated convergence theorem.
\end{proof}

\begin{estimate}\label{est:TimeDiffTwo}
$$
\left|\Dt \intPi \abs{u-u_\Delta}\Dpt \psi \omega_r
  \partial_s \rho_{r_0} \dX \right| \le C\frac{\Dt}{r_0}.
$$
\end{estimate}

\begin{proof}
  Integration by parts yields
$$
\left|\intPi \abs{u-u_\Delta}\Dpt \psi \omega_r \partial_s \rho_{r_0}
  \dX\right| \le \intPi \abs{u_s} \abs{\Dpt \psi} \omega_r \rho_{r_0}
\dX.
$$
Because of \eqref{eq:psiDiff} and since
$$
\norm{\rho_{r_0}}_{L^\infty} \le \frac{\norm{\rho}_{L^\infty}}{r_0},
$$
it follows that
$$
\intPi \abs{u_s} \abs{\Dpt \psi} \omega_r \rho_{r_0} \dX \le
2\frac{\norm{\rho}_{L^\infty}}{r_0} \norm{u_s}_{L^1(\Pi_T)}.
$$
\end{proof}

\begin{proof}[Proof of Theorem~\ref{thm:discreterateImpl}]
  We start out from Lemma~\ref{lemma:MainLemmaTestFuncImpl} with
  $A(\sigma) = \hat{A}(\sigma) + \eta\sigma$, where $\hat{A}$ is the
  original degenerate diffusion function.  By
  Estimate~\ref{est:TimeDiff} and Estimate \ref{est:TimeDiffTwo},
  \begin{multline}\label{eq:E3Def}
    \intPi \abs{u-u_\Delta}S^t_\Dt\psi \omega_r (\Dpt \rho_{r_0}
    - \partial_t\rho_{r_0})\dX \\ +\Dt \intPi \abs{u-u_\Delta}\Dpt
    \psi \omega_r \partial_s \rho_{r_0} \dX \le C\frac{\Dt}{r_0}
    \left(1 + \frac{\Dt}{r_0}\right) =: E_3.
  \end{multline}
  Since all the estimates from Section~\ref{sec:Estimates} apply, we
  obtain
  \begin{align*}
    &\intPi \abs{u_\Delta-u}\delta^\Dt(t-\tau)
    \omega_r(x-y)\rho_{r_0}(t-s)\dX \\ & \quad \le \intPi
    \abs{u_\Dx-u}\delta^\Dt(t-\nu) \omega_r(x-y)\rho_{r_0}(t-s) \dX +
    E_1 + E_2 + E_3,
  \end{align*}
  where $E_1$ and $E_2$ are defined respectively in \eqref{eq:E1Def}
  and \eqref{eq:E2def}.

Let us make the simplifying assumption that $\nu = t_m$ 
and $\tau = t_n$ for some $m,n \in \N$.  
Then the above inequality rewrites as
$$
\kappa(t_n) \le \kappa(t_m)+E_1 + E_2 + E_3,
$$
where
$$
\kappa(t) = \int_\R \int_{\Pi_T}
\abs{u_\Delta(x,t)-u(y,s)}\omega_r(x-y) \rho_{r_0}(t-s) \,dydsdx.
$$
Applying Lemmas \ref{lem:kappa} and \ref{lem:L1TimeLipDisc}, and
following the reasoning given in the semi-discrete case, we arrive at
\begin{align*}%\label{eq:MainIneqDisc}
  &\norm{u_\Delta(\cdot,t_n)-u(\cdot,t_n)}_{L^1(\R)} \\ & \quad \qquad
  \le \norm{u^0_\Delta - u^0}_{L^1(\R)} \\ & \quad \qquad \qquad \quad
  + \left(L_c+L_d\right)t_m +2\left(L_c r_0 +
    \abs{u^0}_{BV(\R)}r\right) \\ & \quad \qquad\qquad \quad
  +C(1+r+\Dx)^2 \left(1 + \frac{\Dx}{r}\right)^3\frac{\Dx}{r^2} +
  C\frac{\Dx}{r_0} + C\frac{\Dt}{r_0}\left(1 + \frac{\Dt}{r_0}\right)
  \\ & \quad \qquad \le \norm{u^0_\Delta - u^0}_{L^1(\R)} +
  C\left(\frac{\Dx}{r^2} + \frac{\Dx + \Dt}{r_0} + r + r_0\right),
\end{align*}
where $L_d$ is the constant in Lemma~\ref{lem:L1TimeLipDisc} and $L_c$
is the constant from Lemma~\ref{lem:kappa}. Minimizing over $r$ and
$r_0$, it is straightforward to see that for sufficiently small $\Dt$,
the minimum of the last term is dominated by
$$
C\left(\Dx^{1/3}+\Dt^{1/2}\right).
$$
This proves the theorem. 
\end{proof}

\section{Explicit difference schemes}\label{sec: error estimate explicit}
In this section we use the techniques developed in the previous
section to provide a similar result concerning the explicit
scheme. Fix a time step $\Dt > 0$. We consider explicit schemes of the
form
\begin{equation}\label{DiscEquExpl}
  \Dpt u^n_j+ \Dm F(u_j^{n},u_{j+1}^{n}) 
  = \Dm\Dp A(u_j^{n}) \qquad n \ge 1,\ j \in \Z,
\end{equation}
where
$$
\Dpt u_j^n = \frac{u_j^{n+1}-u_j^n}{\Dt}.
$$
The relevant a priori estimates and convergence to an entropy solution
is proved in \cite{Evje:2000vn} under the hypothesis
\begin{equation}\label{eq:CFL}
  1 - \frac{\Dt}{\Dx}(F_1'(z) - F_2'(z)) - 2\frac{\Dt}{\Dx^2}A'(w) 
  \ge 0, \quad \forall (z,w) \in \R^2. 
\end{equation}
Let $t_n = n\Dt$ and $x_j = j\Dx$. We define the grid cells
$$
I_j^n = [x_{j-1/2},x_{j+1/2}) \times [t_{n},t_{n+1}), \quad \text{for
  $n \ge 0$ and $j \in \Z$.}
$$
The piecewise constant approximation is defined for all $(x,t) \in
\Pi_T$ by
\begin{equation}\label{eq:uDeltaExplDef}
  u_\Delta(x,t) = u_j^n \, \mbox{ for } (x,t) \in I_j^n.
\end{equation}

\begin{theorem}\label{thm:discreterateExpl}
  Let $u$ be the entropy solution to \eqref{MainProblem}, and let
  $u_\Delta$ be defined by \eqref{eq:uDeltaExplDef} via $u^n_j$, where
  $u^n_j$ solves \eqref{DiscEquExpl}.
  Suppose $\Dt$ and $\Dx$ are chosen such that \eqref{eq:CFL} and the
  strengthened condition $\Dt \le C\Dx^{8/3}$ hold.  If $u^0$
  satisfies the same assumptions as in
  Theorem~\ref{MainResultSemiDisc}, then for all sufficiently small
  $\Dx$, and for all $n \in \N$ such that $t_n\in [0,T]$,
  \begin{equation*}%\label{eq:MainResultDisc}
    \norm{u_\Delta(\cdot,t_n)-u(\cdot,t_n)}_{L^1(\R)} 
    \le \norm{u^0_\Delta - u^0}_{L^1(\R)} + C\Dx^{1/3},
  \end{equation*}
  where the constant $C_T$ depends on $A,f,u^0, T$, but not on $\Dx$.
\end{theorem}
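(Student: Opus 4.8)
The plan is to mirror, essentially line by line, the proof of Theorem~\ref{thm:discreterateImpl}, replacing the backward time difference $\Dmt$ by the forward one $\Dpt$ and isolating the single place where the explicit discretization behaves differently. First I would establish the explicit analogue of Lemma~\ref{lemma: EntropyCalcDiscreteImpl}: starting from \eqref{DiscEquExpl}, multiply by $\psie'(u_j^n,c)$ and apply Lemma~\ref{lemma:ChainRule2} with $g(\sigma)=\sigma$, $a=u_j^n$, $b=u_j^{n+1}$, obtaining $\psie'(u_j^n,c)\Dpt u_j^n = \Dpt\psie(u_j^n,c) + \tfrac{1}{\Dt}\int_{u_j^n}^{u_j^{n+1}}\psie''(z,c)(z-u_j^{n+1})\dz$. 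The remaining manipulations (splitting $\Dm F$ through $F_1,F_2$, the two applications with $g=A$, and the monotonicity of $F$) are identical to Lemma~\ref{lemma: EntropyCalcDiscrete}, so one recovers a discrete entropy inequality with exactly the same dissipation $\Ee_{j,n}$, plus this extra time defect. Building on it I would prove the explicit counterpart of Lemma~\ref{lemma:MainLemmaTestFuncImpl} with the same test function $\test=\psi(t)\omega_r(x-y)\rho_{r_0}(t-s)$; this produces the same two forward-difference time-localization terms treated in Estimate~\ref{est:TimeDiff} and Estimate~\ref{est:TimeDiffTwo}, together with the spatial terms already controlled in Section~\ref{sec:Estimates}.

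The essential difference, and the crux of the matter, is the \emph{sign} of the time defect. In the implicit case the analogous term was nonpositive and was simply discarded to give $\Dmt\psie\le\psie'\Dmt u_j^n$. Here, since $\psie''=\signe'(A(\cdot)-A(c))A'\ge 0$ while $z-u_j^{n+1}$ has the opposite sign to $u_j^{n+1}-u_j^n$ on the interval of integration, the defect $\tfrac{1}{\Dt}\int_{u_j^n}^{u_j^{n+1}}\psie''(z,c)(z-u_j^{n+1})\dz$ is nonpositive, so that $\Dpt\psie(u_j^n,c)\ge\psie'(u_j^n,c)\Dpt u_j^n$ and the entropy equation carries a \emph{nonnegative} correction $R_{j,n}:=\tfrac{1}{\Dt}\int_{u_j^n}^{u_j^{n+1}}\psie''(z,c)(u_j^{n+1}-z)\dz\ge 0$ on the right-hand side. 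Unlike in the implicit case this term cannot be thrown away; assembling $R_\Delta(x,t,y,s)=R_{j,n}(y,s)$ for $(x,t)\in I_j^n$ as was done for $\Ee_\Delta$, it must be estimated after multiplication by $\test$ and integration over $\Pi_T^2$.

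The hard part will be bounding $\liminf_{\eps\downto 0}\intPi R_\Delta\,\test\,\dX$ from above. Writing $u_j^{n+1}-u_j^n=\Dt\,\Dpt u_j^n$ and using \eqref{DiscEquExpl} to split $\Dpt u_j^n$ into its convective part $-\Dm F$ and its parabolic part $\Dm\Dp A(u_j^n)$, the latter---of size $\mathcal{O}(\Dx^{-2})$---is dominant. Bounding $|z-u_j^{n+1}|\le |u_j^{n+1}-u_j^n|$ and exploiting that, as $\eps\downto 0$, $\psie''(\cdot,c)$ concentrates at $z=c$ (with total mass $2$), so that the $c=u(y,s)$ integration localizes to the thin set where $A(u)$ lies between $A(u_j^n)$ and $A(u_j^{n+1})$, one combines the uniform $L^\infty$, $BV$ and $\norm{\Dm\Dp A(u_\Delta)}_{L^1}$ bounds for the explicit scheme (valid under the CFL condition \eqref{eq:CFL}; see \cite{Evje:2000vn}) with Lemma~\ref{omegaLemma}. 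After a computation analogous to those in Section~\ref{sec:Estimates} this yields a bound of the form $\intPi R_\Delta\test\,\dX\le C\frac{\Dt}{\Dx^2\,r}\bigl(1+\tfrac{\Dx}{r}\bigr)$, in which the factor $\Dx^{-2}$ records the parabolic part of the time increment and $r^{-1}$ comes from evaluating $\omega_r$ on the localized set. With the balance $r^3=r_0^2=\Dx$ inherited from the semi-discrete proof this equals $C\,\Dt\,\Dx^{-7/3}$, which is $\mathcal{O}(\Dx^{1/3})$ \emph{exactly} when $\Dt\le C\Dx^{8/3}$; this is precisely where the strengthened hypothesis is used.

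Finally, the two forward-difference time-localization terms are $\mathcal{O}(\Dt/r_0)=\mathcal{O}(\Dt\,\Dx^{-1/2})$ and hence negligible under $\Dt\le C\Dx^{8/3}$, while Estimate~\ref{Beta2Est}, Estimate~\ref{DoubleDerEst}, Estimate~\ref{gamma3Est}, Estimate~\ref{estimate:DissTermSchemeErr} and Estimate~\ref{est:Re} apply verbatim to $u_\Delta$. Collecting $E_1$ from \eqref{eq:E1Def}, $E_2$ from \eqref{eq:E2def}, the new defect bound, and the two time terms, and taking the simplifying choice $\nu=t_m$, $\tau=t_n$, I would invoke Lemma~\ref{lem:kappa} together with the $L^1$ time-Lipschitz continuity of $u_\Delta$ to pass from $\kappa(t_n)\le\kappa(t_m)+(\text{errors})$ to the stated $L^1$ bound, optimize over $r,r_0$ through $r^3=r_0^2=\Dx$, and conclude by sending the regularization parameter $\eta\downto 0$ (recall $A=\hat A+\eta\sigma$, so that $A'>0$ throughout), exactly as at the end of the proof of Theorem~\ref{MainResultSemiDisc}.
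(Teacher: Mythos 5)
You have correctly located the crux — in the explicit scheme the time defect $\tfrac{1}{\Dt}\int_{u_j^n}^{u_j^{n+1}}\psie''(z,c)(z-u_j^{n+1})\dz$ has the unfavourable sign, so a nonnegative remainder is left on the right-hand side of the discrete entropy inequality and must be estimated rather than discarded. But your treatment of that remainder is where the argument breaks down. Your proposed bound $\intPi R_\Delta\test\dX\le C\tfrac{\Dt}{\Dx^2 r}\bigl(1+\tfrac{\Dx}{r}\bigr)$ rests on the claim that, as $\eps\downto 0$, the $(y,s)$-integration localizes to the set where $A(u(y,s))\in\interval\bigl(A(u_j^n),A(u_j^{n+1})\bigr)$ and that this localization produces smallness. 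It does not: the measure of $\seq{y: A(u(y,s))\in\interval(A(u_j^n),A(u_j^{n+1}))}$ is not controlled by $\abs{A(u_j^{n+1})-A(u_j^n)}$ (take $u$ locally constant in $y$ with its value in that range). Dropping the indicator, the crude bound $\abs{R_{j,n}}\le\tfrac{2}{\Dt}\abs{u_j^{n+1}-u_j^n}$ only yields $\intPi R_\Delta\test\dX\le C\sum_n\norm{u_\Delta(\cdot,t_{n+1})-u_\Delta(\cdot,t_n)}_{L^1}\le CT$, i.e.\ $\order{1}$, not $\order{\Dx^{1/3}}$. So the step that is supposed to consume the strengthened CFL condition is unsubstantiated.

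The paper handles this term with two ideas absent from your proposal. First (Lemma~\ref{lemma:EntropyCalcDiscExpl}, Claim~2), writing $u_j^{n+1}-z=(u_j^n-z)-\Dt\Dm F+\Dt\Dm\Dp A(u_j^n)$, the convective part of the defect is not estimated at all: it is \emph{absorbed} into the spatial entropy-dissipation terms $S_1,S_2$ using the monotonicity of $H(z)=z-\tfrac{\Dt}{\Dx}(F_1(z)-F_2(z))$ guaranteed by \eqref{CFLConditionCons}, leaving only the parabolic leftover $\bigl(\int_{u_j^n}^{u_j^{n+1}}\psie''(z,c)\dz\bigr)\Dm\Dp A(u_j^n)=\Dt\,\Dpt\signe(A(u_j^n)-A(u))\,\Dm\Dp A(u_j^n)$. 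Second (Estimate~\ref{est:TimeErrExpl}), this leftover is handled by summation by parts in time, which transfers $\Dpt$ onto $\Dm\Dp A(u_\Delta)\test$; the resulting term with $\Dmt\Dm\Dp A(u_\Delta)$ is controlled by the $L^1$ time-H\"older estimate $\Dx\sum_j\abs{\Dp A(u_j^m)-\Dp A(u_j^n)}\le L\sqrt{(m-n)\Dt}$ (Lemma~\ref{lemma:HolderContDiff}, imported from \cite{Evje:2000vn}), producing the bound $C\sqrt{\Dt}/\Dx$, which is $\order{\Dx^{1/3}}$ precisely under $\Dt\le C\Dx^{8/3}$. Without the CFL absorption and without Lemma~\ref{lemma:HolderContDiff} your argument has no mechanism to make the new term small, so the proof as proposed does not go through. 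The remainder of your outline (the explicit analogues of Lemma~\ref{lemma:MainLemmaTestFuncImpl}, Estimates~\ref{est:TimeDiff}--\ref{est:TimeDiffTwo}, the spatial estimates, Lemma~\ref{lem:kappa}, and the final optimization and $\eta\downto 0$ limit) matches the paper.
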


We begin by proving the following lemma
\begin{lemma}\label{lemma:EntropyCalcDiscExpl}
  Let $\seq{u_j^n}$ be the solution to \eqref{DiscEquExpl}.  Suppose
  that
  \begin{equation}\label{CFLConditionCons}
    1-\frac{\Dt}{\Dx}(F_1'(z)-F_2'(z)) \ge 0,
  \end{equation}
  for all $z \in \R$. Then for all $j \in \Z$ and $n \in \N$,
  \begin{align*}
    & \Dpt \psie(u_j^n,c)
    + \Dm Q^c(u_j^n,u_{j+1}^n)  - \Dm\Dp\abseps{A(u_j^n)-A(c)} \\
    & \qquad \le -\frac{1}{(\Dx)^2}\int^{u_j^n}_{u_{j-1}^n} \psie''(z,c)(A(z)-A(u_{j-1}^n))\dz \\
    & \qquad\quad
    - \frac{1}{(\Dx)^2}\int^{u_j^n}_{u_{j+1}^n} \psie''(z,c)(A(z)-A(u_{j+1}^n))\dz \\
    &\qquad\quad + \left(\int_{u_j^n}^{u_j^{n+1}} \psie''(z,c)\dz
    \right)\Dm\Dp A(u_j^n),
  \end{align*}
  where $Q^c(u,v)$ is defined by
$$
Q^{c}(u,v) = \int_c^u\psie'(z,c)F_1'(z) \dz + \int_c^v
\psie'(z,c)F_2'(z) \dz, \qquad u,v\in \R.
$$
\end{lemma}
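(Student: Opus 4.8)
The plan is to imitate the proof of Lemma~\ref{lemma: EntropyCalcDiscrete} as closely as possible, the only genuinely new ingredient being the forward time difference. First I would multiply the scheme \eqref{DiscEquExpl} by $\psie'(u_j^n,c)$, giving
\[
\psie'(u_j^n,c)\Dpt u_j^n + \psie'(u_j^n,c)\Dm F(u_j^n,u_{j+1}^n) = \psie'(u_j^n,c)\Dm\Dp A(u_j^n).
\]
The two spatial terms are treated verbatim as in Lemma~\ref{lemma: EntropyCalcDiscrete}: writing $F=F_1+F_2$ and applying Lemma~\ref{lemma:ChainRule2} with $g=F_1$, with $g=F_2$, and (twice) with $g=A$ turns them into $\Dm Q^c(u_j^n,u_{j+1}^n)$, the term $\Dm\Dp\abseps{A(u_j^n)-A(c)}$, the two diffusion remainders $\frac{1}{(\Dx)^2}\int_{u_{j\mp1}^n}^{u_j^n}\psie''(z,c)(A(z)-A(u_{j\mp1}^n))\dz$ appearing on the right of the statement, and two flux remainders involving $F_1,F_2$. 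Since $\psie''\ge0$ and $F_1$ (resp.\ $F_2$) is nondecreasing (resp.\ nonincreasing) by monotonicity, the flux remainders are nonnegative; unlike in the semi-discrete lemma I would \emph{retain} them, denoting their sum by $E_F^c\ge0$ and the diffusion remainders by $E_A^c$.

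The new point is the time term. Applying Lemma~\ref{lemma:ChainRule2} with $g(\sigma)=\sigma$, $a=u_j^n$, $b=u_j^{n+1}$ gives
\[
\psie'(u_j^n,c)\Dpt u_j^n = \Dpt\psie(u_j^n,c) + \frac{1}{\Dt}\int_{u_j^n}^{u_j^{n+1}}\psie''(z,c)(z-u_j^{n+1})\dz.
\]
In the implicit case the analogous correction had the favorable sign and was simply discarded; here $\frac{1}{\Dt}\int_{u_j^n}^{u_j^{n+1}}\psie''(z,c)(u_j^{n+1}-z)\dz\ge0$ (because $\psie''\ge0$ and the integral direction and $u_j^{n+1}-z$ combine to a nonnegative quantity), so it cannot be dropped and must instead be absorbed into the right-hand side. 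Collecting everything yields the identity
\[
\Dpt\psie(u_j^n,c)+\Dm Q^c-\Dm\Dp\abseps{A(u_j^n)-A(c)} = -E_A^c - E_F^c - \frac{1}{\Dt}\int_{u_j^n}^{u_j^{n+1}}\psie''(z,c)(z-u_j^{n+1})\dz.
\]

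It remains to show that the correction is dominated by the advertised extra term. Writing $u_j^{n+1}-z=(u_j^{n+1}-u_j^n)+(u_j^n-z)$ and noting that $\frac{1}{\Dt}\int_{u_j^n}^{u_j^{n+1}}\psie''(z,c)(u_j^n-z)\dz\le0$, I would discard this last piece and use the scheme in the form $\frac{u_j^{n+1}-u_j^n}{\Dt}=-\Dm F+\Dm\Dp A$ to obtain
\[
\frac{1}{\Dt}\int_{u_j^n}^{u_j^{n+1}}\psie''(z,c)(u_j^{n+1}-z)\dz \le \bigl(-\Dm F(u_j^n,u_{j+1}^n)+\Dm\Dp A(u_j^n)\bigr)\int_{u_j^n}^{u_j^{n+1}}\psie''(z,c)\dz.
\]
The diffusion part here is precisely the extra term $\bigl(\int_{u_j^n}^{u_j^{n+1}}\psie''(z,c)\dz\bigr)\Dm\Dp A(u_j^n)$ in the statement. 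The hard part is the hyperbolic piece $-\Dm F\,\int_{u_j^n}^{u_j^{n+1}}\psie''(z,c)\dz$: one must show that it is bounded above by the retained nonnegative flux remainders $E_F^c$, and this is exactly where the CFL condition \eqref{CFLConditionCons} enters, since it makes the hyperbolic part of the update a monotone, wave-speed-limited operation and thereby limits its entropy production. Once this sign-chase is carried out and the now-dominated flux terms are discarded, the stated inequality follows. I expect this CFL bookkeeping for the hyperbolic contribution of the time-correction term to be the main obstacle; the spatial and chain-rule computations are routine adaptations of the semi-discrete lemma.
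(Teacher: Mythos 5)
Your setup coincides with the paper's: multiply the scheme by $\psie'(u_j^n,c)$, treat the spatial terms exactly as in Lemma~\ref{lemma: EntropyCalcDiscrete} while \emph{retaining} the nonnegative flux remainders, and handle the time difference via Lemma~\ref{lemma:ChainRule2} with $g(\sigma)=\sigma$, which produces the correction $\frac{1}{\Dt}\int_{u_j^n}^{u_j^{n+1}}\psie''(z,c)(z-u_j^{n+1})\dz$ with the unfavorable sign. Up to that point you are reproducing the paper's Claim~1. The problem is in how you then process this correction. You split $u_j^{n+1}-z=(u_j^{n+1}-u_j^n)+(u_j^n-z)$ and discard $\frac{1}{\Dt}\int_{u_j^n}^{u_j^{n+1}}\psie''(z,c)(u_j^n-z)\dz\le 0$. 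That discarded quantity is, up to sign, exactly the term $T_1=\frac{1}{\Dt}\int_{u_j^n}^{u_j^{n+1}}\psie''(z,c)(z-u_j^n)\dz\ge 0$ that the paper keeps, and it is the \emph{only} place where the CFL condition \eqref{CFLConditionCons} can act: the paper combines $T_1$ with the hyperbolic contribution $\Dm F\int\psie''\dz$ and the flux remainders into
\begin{equation*}
\frac{1}{\Dt}\int_{u_j^n}^{u_j^{n+1}}\psie''(z,c)\left[H(z)-H(u_j^n)\right]\dz,
\qquad H(z)=z-\frac{\Dt}{\Dx}\left(F_1(z)-F_2(z)\right),
\end{equation*}
which is nonnegative precisely because $H'=1-\frac{\Dt}{\Dx}(F_1'-F_2')\ge 0$. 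The ``$1$'' in the CFL condition is the derivative of the $z$ you threw away.

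Consequently the step you defer --- showing $-\Dm F(u_j^n,u_{j+1}^n)\int_{u_j^n}^{u_j^{n+1}}\psie''(z,c)\dz\le E_F^c$ --- is not just the hard part; in your formulation it is false in general. Take the upwind flux ($F_2=0$, $F_1=f$ increasing), $u_{j-1}^n>u_j^n$ so that $u_j^{n+1}>u_j^n$, and let $\eps$ be small with $c$ chosen so that $\psie''$ concentrates near a point interior to both $(u_j^n,u_j^{n+1})$ and $(u_j^n,u_{j-1}^n)$. Then $E_F^c=\frac{1}{\Dx}\int_{u_j^n}^{u_{j-1}^n}\psie''(z,c)\bigl(F_1(u_{j-1}^n)-F_1(z)\bigr)\dz$ has pointwise \emph{smaller} integrand than $-\Dm F\int\psie''\dz=\frac{1}{\Dx}\int_{u_j^n}^{u_j^{n+1}}\psie''(z,c)\bigl(F_1(u_{j-1}^n)-F_1(u_j^n)\bigr)\dz$ on the support of $\psie''$, since $F_1(z)>F_1(u_j^n)$ there; the deficit is of order $\frac{1}{\Dx}F_1'(\xi)(z-u_j^n)$ per unit of $\psie''$-mass and is covered only by $T_1$, which supplies $\frac{1}{\Dt}(z-u_j^n)$ per unit mass --- this is exactly the comparison \eqref{CFLConditionCons} encodes. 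So the fix is not additional bookkeeping but a retraction of the discard: keep $T_1$, substitute $u_j^{n+1}=u_j^n-\Dt\Dm F+\Dt\Dm\Dp A$ into $(z-u_j^{n+1})$ as the paper does, shift the integration endpoints of the flux remainders from $u_j^n$ to $u_j^{n+1}$ (using monotonicity of $F$ to drop the two resulting terms of good sign), and only then invoke the monotonicity of $H$.
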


\begin{proof}
  We divide the proof into two steps. \\*
  \emph{Claim 1.} Let $\seq{u_j^n}$ be a solution to
  \eqref{DiscEquExpl}. Then
  \begin{align*}
    &\Dpt \psie(u_j^n,c)  + \Dm Q^c(u_j^n,u_{j+1}^n) - \Dm\Dp\abseps{A(u_j^n)-A(c)} \\
    & \qquad =-E^c(u_j^n,u_j^{n+1},u_{j+1}^n,u_{j-1}^n),
  \end{align*}
  where
  \begin{equation*}%\label{eq:DissipTermDisc}
    \begin{split}
      &E^c(u_j^n,u_j^{n+1},u_{j+1}^n,u_{j-1}^n) \\ & \quad =
      \frac{1}{\Dt}\int_{u_j^n}^{u_j^{n+1}}
      \psie''(z,c)(z-u_j^{n+1})\dz \\ & \qquad
      +\frac{1}{\Dx}\int^{u_j^n}_{u_{j-1}^n}
      \psie''(z,c)\left[(F_1(z)-F_1(u_{j-1}^n))+\frac{1}{\Dx}(A(z)-A(u_{j-1}^n))\right]\dz
      \\ & \qquad + \frac{1}{\Dx}\int_{u_j^n}^{u_{j+1}^n}
      \psie''(z,c)\left[(F_2(z)-F_2(u_{j+1}^n))-\frac{1}{\Dx}(A(z)-A(u_{j+1}^n))\right]\dz.
    \end{split}
  \end{equation*}
  \emph{Proof of Claim 1.} By definition \eqref{DiscEquExpl} of
  $\seq{u_j^n}$ it follows that
$$
\psie'(u_j^n,c)\left[ \Dpt u_j^n + \Dm F(u_j^n,u_{j+1}^n) - \Dm\Dp
  A(u_j^n)\right] = 0.
$$
Let $g(z) = z$ in Lemma~\ref{lemma:ChainRule2}. It follows that
$$
\psie'(u_j^n,c)\Dpt u_j^n = \Dpt \psie(u_j^n,c) + \frac{1}{\Dt}
\int_{u_j^n}^{u_j^{n+1}} \psie''(z,c)(z-u_j^{n+1})\,dz.
$$
The remaining terms can be treated as in
Lemma~\ref{lemma: EntropyCalcDiscrete}. \\*
\emph{Claim 2.} Suppose \eqref{CFLConditionCons} holds. Then
\begin{equation}\label{eq:Claim2ExplEntropy}
  \begin{split}
    &\frac{1}{\Dt}\int_{u_j^n}^{u_j^{n+1}} \psie''(z,c)(z-u_j^{n+1})\dz  \\
    &\qquad +\frac{1}{\Dx}\int^{u_j^n}_{u_{j-1}^n}
    \psie''(z,c)(F_1(z)-F_1(u_{j-1}^n))\dz \\ &\qquad
    +\frac{1}{\Dx}\int_{u_j^n}^{u_{j+1}^n}
    \psie''(z,c)(F_2(z)-F_2(u_{j+1}^n))\dz \\ & \qquad \quad \qquad
    \ge -\int_{u_j^n}^{u_j^{n+1}}\psie''(z,c)\Dm\Dp A(u_j^n) \dz.
  \end{split}
\end{equation}
\emph{Proof of Claim 2.} Consider the first term on the left-hand side
of \eqref{eq:Claim2ExplEntropy}. By definition, $u_j^{n+1} = u_j^n -
\Dt\Dm F(u_j^n,u_{j+1}^n) + \Dt \Dm\Dp A(u_j^n)$, and so
\begin{align*}
  &\frac{1}{\Dt}\int_{u_j^n}^{u_j^{n+1}} \psie''(z,c)(z-u_j^{n+1})\dz
  \\ & \quad = \frac{1}{\Dt}\int_{u_j^n}^{u_j^{n+1}}
  \psie''(z,c)(z-u_j^n)\dz
  % \\ & \qquad
  +\int_{u_j^n}^{u_j^{n+1}}\psie''(z,c)\Dm F(u_j^n,u_{j+1}^n)\dz \\ &
  \quad \qquad\qquad - \int_{u_j^n}^{u_j^{n+1}} \psie''(z,c)\Dm\Dp
  A(u_j^n)\dz =: T_1 + T_2 + T_3.
\end{align*}
Note that $T_1$ is positive. Let us split $T_2$ according to
$$
\Dm F(u_j^n,u_{j+1}^n) =
\frac{1}{\Dx}\left(F_1(u_j^n)-F_1(u_{j-1}^n)\right) +
\frac{1}{\Dx}\left(F_2(u_{j+1}^n)-F_2(u_j^n)\right),
$$
and thus
\begin{align*}
  T_2 &= \frac{1}{\Dx}\int_{u_j^n}^{u_j^{n+1}}
  \psie''(z,c)\left(F_1(u_j^n)-F_1(u_{j-1}^n)\right) \dz \\ & \qquad +
  \frac{1}{\Dx}\int_{u_j^n}^{u_j^{n+1}}
  \psie''(z,c)\left(F_2(u_{j+1}^n)-F_2(u_j^n)\right) \dz.
\end{align*}
Now, let us split the two other terms appearing in equation 
\eqref{eq:Claim2ExplEntropy}:
\begin{align*}
  S_1 & := \frac{1}{\Dx}\int^{u_j^n}_{u_{j-1}^n}
  \psie''(z,c)(F_1(z)-F_1(u_{j-1}^n))\dz \\ & =
  \frac{1}{\Dx}\int^{u_j^{n+1}}_{u_{j-1}^n}
  \psie''(z,c)(F_1(z)-F_1(u_{j-1}^n))\dz \\ & \qquad
  -\frac{1}{\Dx}\int_{u_j^n}^{u_j^{n+1}}
  \psie''(z,c)(F_1(z)-F_1(u_{j-1}^n))\dz
\end{align*}
and
\begin{align*}
  S_2 & := \frac{1}{\Dx}\int_{u_j^n}^{u_{j+1}^n}
  \psie''(z,c)(F_2(z)-F_2(u_{j+1}^n))\dz \\ & =
  -\frac{1}{\Dx}\int^{u_j^{n+1}}_{u_{j+1}^n}
  \psie''(z,c)(F_2(z)-F_2(u_{j+1}^n))\dz \\ & \qquad
  +\frac{1}{\Dx}\int^{u_j^{n+1}}_{u_j^n}
  \psie''(z,c)(F_2(z)-F_2(u_{j+1}^n))\dz.
\end{align*}
Combining the above expressions we obtain
\begin{align*}
  T_2 + S_1 + S_2 & =-\frac{1}{\Dx}\int_{u_j^n}^{u_j^{n+1}}
  \psie''(z,c)(F_1(z)-F_1(u_j^n))\dz \\ & \qquad
  +\frac{1}{\Dx}\int_{u_j^n}^{u_j^{n+1}}
  \psie''(z,c)(F_2(z)-F_2(u_j^n))\dz \\ & \qquad
  +\frac{1}{\Dx}\int^{u_j^{n+1}}_{u_{j-1}^n}
  \psie''(z,c)(F_1(z)-F_1(u_{j-1}^n))\dz \\ & \qquad
  -\frac{1}{\Dx}\int^{u_j^{n+1}}_{u_{j+1}^n}
  \psie''(z,c)(F_2(z)-F_2(u_{j+1}^n))\dz.
\end{align*}
The two last terms on the right-hand side are positive as $F$ is
monotone. Let
$$
H(z) = z - \frac{\Dt}{\Dx}\left(F_1(z)-F_2(z)\right).
$$
Then, by assumption \eqref{CFLConditionCons},
$$
T_1 + T_2 + S_1 + S_2 \ge \frac{1}{\Dt}\int_{u_j^n}^{u_j^{n+1}}
\psie''(z,c)\left[H(z)-H(u_j^n)\right]\dz \ge 0.
$$
Adding $T_3$ to both sides proves Claim 2.

By Claim 2
\begin{equation*}
  \begin{split}
    & E^c (u_j^n,u_j^{n+1},u_{j+1}^n,u_{j-1}^n) \\ & \quad \ge
    \frac{1}{\Dx^2}\int^{u_j^n}_{u_{j-1}^n}
    \psie''(z,c)(A(z)-A(u_{j-1}^n))\dz \\ & \quad \qquad
    +\frac{1}{\Dx^2}\int^{u_j^n}_{u_{j+1}^n}
    \psie''(z,c)(A(z)-A(u_{j+1}^n))\dz \\ & \quad \qquad
    -\int_{u_j^n}^{u_j^{n+1}} \psie''(z,c)\Dm\Dp A(u_j^n)\dz.
  \end{split}
\end{equation*}
Combining this with Claim 1 proves the lemma.
\end{proof}

\begin{lemma}\label{lemma:MainLemmaTestFuncExpl}
  Suppose $A' > 0$, and \eqref{CFLConditionCons} applies.  Let
  $u_\Delta = u_\Delta(x,t)$ be defined by \eqref{eq:uDeltaExplDef},
  and let $u=u(y,s)$ be the classical solution of \eqref{MainProblem}.
  Set $\psi(t) := \car{[\nu,\tau)}(t)$ and define
$$
\test(x,t,y,s) = \psi(t)\omega_r(x-y)\rho_{r_0}(t-s),
$$
where $\omega_r,\rho_{r_0},\nu,\tau$ are chosen as in
Lemma~\ref{lemma:MainLemmaTestFunc}. Then
\begin{align*}\label{MainIneq2Impl}
  \intPi  & \abs{u_\Delta-u}\delta_\Dt^+(t-\nu)\omega_r\rho_{r_0} \dX  \\
  &+ \intPi \abs{u_\Delta-u}S^t_{-\Dt}\psi \omega_r (\Dmt \rho_{r_0} - \partial_t\rho_{r_0})\dX \\
  &+ \Dt\intPi \abs{u_\Delta-u}\Dmt \psi \omega_r \partial_s\rho_{r_0} \dX \\
  &+\intPi \sgn{u_\Delta-u}\left(f(u_\Delta)-f(u)\right) \left(\Dp\test +\test_{y}\right)\dX \\
  &+\intPi \left(\int_{u_\Dx}^{S_\Dx u_{\Delta}} \sgn{z-u}F_2'(z)\,dz \right)\,\Dp\test \dX \\
  &+\intPi \abs{A(u_\Delta)-A(u)} \left(\Dm\Dp\test + (\Dp + \Dm)\test_{y} + \test_{yy}\right) \dX \\
  & \qquad \ge \intPi \abs{u_\Delta-u}\delta_\Dt^+(t-\tau)\omega_r\rho_{r_0} \dX  \\
  & \qquad \qquad
  + \liminf_{\varepsilon \downto 0} \intPi \Ee_\Delta \test \dX \\
  & \qquad\qquad - \Dt \intPi \Dpt \signe(A(u_\Delta)-A(u)) \Dm\Dp
  A(u_\Delta)\test\dX,
\end{align*}
where
$$
\delta_\Dt^+(t) = \frac{1}{\Dt}\car{[0,\Dt)}(t),
$$
and $\Ee_\Delta(x,t,y,s) = \Ee[u](u_{j-1}^n,u_j^n,u_{j+1}^n)(y,s)$ for
$(x,t) \in I_j^n$.
\end{lemma}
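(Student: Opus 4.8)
The plan is to follow the proof of the implicit Lemma~\ref{lemma:MainLemmaTestFuncImpl} essentially verbatim, with the forward time difference $\Dpt$ in place of the backward one $\Dmt$, and to keep careful track of the single genuinely new term produced by the explicit discretization.

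First I would establish the explicit counterpart of the doubling inequality in Lemma~\ref{lemma:DoubelingLemma}. Taking $c=u$ in Lemma~\ref{lemma:EntropyCalcDiscExpl}, $c=u_j^n$ in Lemma~\ref{lemma:EntropyCalcCont}, and adding these to the mixed-term identity of Lemma~\ref{lemma:MixedTerm}, the four dissipation integrals and the continuous contribution combine exactly as before into $-\Ee_{j,n}$ with $\Ee_{j,n}=\Ee[u](u_{j-1}^n,u_j^n,u_{j+1}^n)$. The only new object is the surviving contribution $\bigl(\int_{u_j^n}^{u_j^{n+1}}\psie''(z,u)\dz\bigr)\Dm\Dp A(u_j^n)$ from Lemma~\ref{lemma:EntropyCalcDiscExpl}. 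Since $\psie'(z,u)=\signe(A(z)-A(u))$, the inner integral telescopes to $\signe(A(u_j^{n+1})-A(u))-\signe(A(u_j^n)-A(u))=\Dt\,\Dpt\signe(A(u_j^n)-A(u))$, so this term equals $\Dt\,\Dpt\signe(A(u_j^n)-A(u))\,\Dm\Dp A(u_j^n)$; this is precisely what will appear, with a sign change, on the right-hand side of the claim.

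Next I would multiply the resulting pointwise inequality by the nonnegative $\test$ and integrate over $\Pi_T^2$. Ordinary integration by parts in $y$ and integration by parts for difference quotients in $x$ transfer the $y$- and $x$-derivatives onto $\test$, while integration by parts for difference quotients in $t$ turns $\Dpt$ acting on $\psie(u_j^n,u)$ into $\Dmt$ acting on $\test$; the choices of $\nu,\tau,r_0$ borrowed from Lemma~\ref{lemma:MainLemmaTestFunc} ensure that no boundary contributions survive. The accompanying sign reversal flips the doubling inequality into the $\ge$ form stated, with $\liminf_{\varepsilon\downto0}\intPi\Ee_\Delta\test\dX$ and the new term $-\Dt\intPi\Dpt\signe(A(u_\Delta)-A(u))\Dm\Dp A(u_\Delta)\test\dX$ on the right. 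I would then let $\varepsilon\downto0$ exactly as in Lemma~\ref{lemma:MainLemmaTestFunc}: $\psie(u_\Delta,u)$ and $\psie(u,u_\Delta)$ tend to $\abs{u_\Delta-u}$, while $\qe$ and $Q^u$ yield $\sgn{u_\Delta-u}(f(u_\Delta)-f(u))$ together with the $\int_{u_\Delta}^{S_\Dx u_\Delta}\sgn{z-u}F_2'(z)\,dz$ term, and the $\Ee_\Delta$ term is retained under a $\liminf$. The extra term causes no difficulty in the limit: its factor $\Dm\Dp A(u_\Delta)$ is independent of $\varepsilon$ and $\Dpt\signe$ is bounded, so it is uniformly bounded in $\varepsilon$ and passes through by dominated convergence.

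Finally I would decompose the time contribution $\abs{u_\Delta-u}(\Dmt\test+\test_s)$. The Leibniz rule for backward difference quotients gives $\Dmt\test=\omega_r(\rho_{r_0}\Dmt\psi+S^t_{-\Dt}\psi\,\Dmt\rho_{r_0})$, and substituting $\psi=S^t_{-\Dt}\psi+\Dt\,\Dmt\psi$ into $\test_s=-\psi\,\omega_r\partial_t\rho_{r_0}$ separates out the three advertised pieces: the $S^t_{-\Dt}\psi\,\omega_r(\Dmt\rho_{r_0}-\partial_t\rho_{r_0})$ term, the $\Dt\,\Dmt\psi\,\omega_r\partial_s\rho_{r_0}$ term, and the leading term $\Dmt\psi\,\omega_r\rho_{r_0}$, in which $\Dmt\psi=\delta_\Dt^+(t-\nu)-\delta_\Dt^+(t-\tau)$ by the elementary identity $\Dmt\car{[\nu,\tau)}=\delta_\Dt^+(\cdot-\nu)-\delta_\Dt^+(\cdot-\tau)$. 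The one step requiring genuine care, as opposed to mere bookkeeping, is the identification and correct sign of the explicit extra term; everything else is a transcription of the implicit argument under $\Dmt\leftrightarrow\Dpt$, $S^t_\Dt\leftrightarrow S^t_{-\Dt}$, and $\delta_\Dt^-\leftrightarrow\delta_\Dt^+$. This extra term is left unestimated at this stage and will be absorbed in the proof of Theorem~\ref{thm:discreterateExpl} with the help of the strengthened restriction $\Dt\le C\Dx^{8/3}$.
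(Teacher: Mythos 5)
Your proposal is correct and follows essentially the same route as the paper: the same doubling of variables via Lemma~\ref{lemma:EntropyCalcDiscExpl}, the same telescoping identity $\int_{u_j^n}^{u_j^{n+1}}\psie''(z,u)\,dz = \Dt\,\Dpt\signe(A(u_j^n)-A(u))$ for the extra explicit term, the same integration by parts and $\varepsilon\downto 0$ passage, and the same Leibniz-rule decomposition of $\Dmt\test+\test_s$ into the three advertised time pieces with $\Dmt\psi=\delta_\Dt^+(\cdot-\nu)-\delta_\Dt^+(\cdot-\tau)$. The only cosmetic discrepancy is that the lemma's statement retains $\signe$ (not its $\varepsilon\downto 0$ limit) in the extra term, whereas you pass to the limit there; since the subsequent bound in Estimate~\ref{est:TimeErrExpl} is uniform in $\varepsilon$, this is immaterial.
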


\begin{proof}
  By Lemma~\ref{lemma:EntropyCalcDiscExpl}, we obtain as in
  Lemma~\ref{lemma:DoubelingLemma} the following inequality:
  \begin{align*}
    & \Dpt\psie(u_j^n,u) + \partial_s\psie(u,u_j^n) + \partial_y
    \qe(u,u_j^n) + \Dm Q^u(u_j^n,u_{j+1}^n) \\ & \qquad \qquad
    -( \partial_y^2 +\partial_y(\Dm +\Dp) +
    \Dm\Dp)\abseps{A(u_j^n)-A(u)}) \\ & \qquad\qquad \qquad \le
    -\Ee_{j,n} + \left(\int_{u_j^n}^{u_j^{n+1}} \psie''(z,u)\dz
    \right)\Dm\Dp A(u_j^n),
  \end{align*}
  where $\Ee_{j,n} := \Ee[u](u_{j-1}^n,u_j^n,u_{j+1}^n)$ is defined in
  Lemma~\ref{lemma:DoubelingLemma}. Note that
$$
\int_{u_j^n}^{u_j^{n+1}} \psie''(z,u)\dz = \Dt \,\Dpt
\signe(A(u_j^n)-A(u)).
$$
Integration by parts for difference quotients and ordinary integration
by parts gives
\begin{align*}
  &\intPi \psie(u_\Delta,u)\Dmt\test + \psie(u,u_\Delta)\test_s \dX \\
  & \quad +\intPi \qe(u,u_\Delta)\test_y + Q^u(u_\Delta,S_\Dx
  u_\Delta)\Dp\test \dX \\ & \quad +\intPi
  \abseps{A(u_\Delta)-A(u)}(\test_{yy} +(\Dm +\Dp)\test_y +
  \Dm\Dp\test) \dX \\ & \quad\quad \ge \intPi \Ee_\Delta \test \dX -
  \Dt \intPi \Dpt \signe(A(u_\Dt)-A(u)) \Dm\Dp A(u_\Delta)\test\dX.
\end{align*}
Consider the first term on the left-hand side. Let $\varepsilon$ tend
to zero as in the proof of Lemma~\ref{lemma:MainLemmaTestFunc}. Using
the Leibniz rule for difference quotients we obtain
$$
\Dmt \test = S^t_{-\Dt}\psi \omega_r \Dmt \rho_{r_0} + \Dmt \psi
\omega_r \rho_{r_0}.
$$
Recall that $\partial_s\rho_{r_0} = -\partial_t\rho_{r_0}$, so adding
and subtracting gives
$$
\test_s = -S^t_{-\Dt}\psi \omega_r \partial_t \rho_{r_0} + \Dt \Dmt
\psi \omega_r \partial_s \rho_{r_0}.
$$
Hence,
\begin{align*}%\label{eq:TimeDerSplit}
  & \intPi \abs{u_\Delta-u}\left(\Dmt \test + \test_s\right)\dX \\ &
  \qquad = \intPi \abs{u_\Delta-u} S^t_{-\Dt}\psi \omega_r (\Dmt
  \rho_{r_0} - \partial_t\rho_{r_0})\dX \\ & \qquad \qquad + \Dt\intPi
  \abs{u_\Delta-u} \Dmt \psi \omega_r \partial_s\rho_{r_0} \dX
  % \\ & \qquad \qquad
  + \intPi \abs{u_\Delta-u}\Dmt \psi \omega_r \rho_{r_0} \dX.
\end{align*}
Finally, we use that
\begin{equation}\label{eq:psiTimeDiff}
  \Dmt \psi  = \delta_\Dt^+(t-\nu) -  \delta_\Dt^+(t-\tau).
\end{equation}
Concerning the second term on the left-hand side, we apply
\eqref{eq:NumFluxPres}. The lemma now follows by sending $\varepsilon$
to zero, as in the proof of Lemma~\ref{lemma:MainLemmaTestFunc}.
\end{proof}

As seen by comparing Lemmas \ref{lemma:MainLemmaTestFuncExpl} and 
\ref{lemma:MainLemmaTestFuncImpl}, there is one new term. To
estimate this term we will use a result from
\cite[p.~1853]{Evje:2000vn}.

\begin{lemma}\label{lemma:HolderContDiff}
  Let $u_j^n$ be the solution to \eqref{DiscEquExpl}.  Suppose the CFL
  condition \eqref{eq:CFL} is satisfied.  Then there exists a constant
  $L$ such that
$$
\Dx \sum_j \abs{\Dp A(u_j^m)-\Dp A(u_j^n)} \le L\sqrt{(m-n)\Dt},
\qquad \text{for all $m \ge n$.}
$$
\end{lemma}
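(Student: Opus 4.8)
The plan is to regard $w^n := \seq{\Dp A(u_j^n)}_{j}$ as a sequence that is uniformly bounded in $BV$ but only \emph{weakly} Lipschitz in time, and then to run a Kru\v{z}kov-type interpolation between these two facts to produce the $\sqrt{(m-n)\Dt}$ modulus. Throughout I use the norms $\norm{\cdot}_1$ and $\abs{\cdot}_{BV}$ from Section~\ref{sec: the finite difference scheme}, together with the a priori bounds valid under \eqref{eq:CFL} established in \cite{Evje:2000vn}: the $L^1$ time-Lipschitz estimate $\norm{u^m-u^n}_1\le C(m-n)\Dt$, the uniform bound $\sup_n\abs{\Dp A(u^n)}_{BV}=\sup_n\norm{\Dm\Dp A(u^n)}_1\le C$ (the discrete analogue of $\norm{A(u)_{xx}}_{L^1}$), and the Lipschitz continuity of $A$ on the range of $u_\Delta$.

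First I would record two elementary estimates. Telescoping in time and using that $A$ is Lipschitz together with the time-Lipschitz bound gives the crude per-step estimate
\begin{equation*}
  \norm{\Dp A(u^m)-\Dp A(u^n)}_1
  \le \sum_{k=n}^{m-1}\frac{2}{\Dx}\norm{A(u^{k+1})-A(u^k)}_1
  \le C\,\frac{(m-n)\Dt}{\Dx}.
\end{equation*}
Second, for any test sequence $\seq{\phi_j}\in\ell^1$, integration by parts for difference quotients yields the weak time-continuity bound
\begin{equation*}
  \Bigl|\Dx\sum_j\bigl(\Dp A(u_j^m)-\Dp A(u_j^n)\bigr)\phi_j\Bigr|
  =\Bigl|\Dx\sum_j\bigl(A(u_j^m)-A(u_j^n)\bigr)\Dm\phi_j\Bigr|
  \le C\,(m-n)\Dt\,\norm{\Dm\phi}_{\ell^\infty}.
\end{equation*}

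The interpolation step is the heart of the argument. I fix an integer $P\ge1$ and let $\overline{w}^n$ denote the running average of $w^n$ over $P$ consecutive cells. Splitting
\begin{equation*}
  \norm{w^m-w^n}_1\le \norm{w^m-\overline{w}^m}_1+\norm{\overline{w}^m-\overline{w}^n}_1+\norm{\overline{w}^n-w^n}_1,
\end{equation*}
I would bound the two mollification errors by $C\,P\Dx\,\abs{w^n}_{BV}\le C\,P\Dx$, while for the middle term I realize the $\norm{\cdot}_1$ norm by duality against a sign sequence, pass the averaging onto the test sequence (so that its forward difference is $O(1/(P\Dx))$), and apply the weak bound to get $\norm{\overline{w}^m-\overline{w}^n}_1\le C(m-n)\Dt/(P\Dx)$. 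Hence
\begin{equation*}
  \norm{\Dp A(u^m)-\Dp A(u^n)}_1\le C\Bigl(P\Dx+\frac{(m-n)\Dt}{P\Dx}\Bigr).
\end{equation*}

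It then remains to optimize in $P$. If $\sqrt{(m-n)\Dt}\ge\Dx$, I choose $P=\lceil \sqrt{(m-n)\Dt}/\Dx\rceil\ge1$, so that $P\Dx\approx\sqrt{(m-n)\Dt}$ and both terms are $\le C\sqrt{(m-n)\Dt}$. If instead $\sqrt{(m-n)\Dt}<\Dx$, the averaging is vacuous and I fall back on the crude estimate, using $(m-n)\Dt=\sqrt{(m-n)\Dt}\cdot\sqrt{(m-n)\Dt}<\Dx\sqrt{(m-n)\Dt}$ to conclude $(m-n)\Dt/\Dx<\sqrt{(m-n)\Dt}$; this small-time regime is precisely the one made admissible by the parabolic part of the CFL condition \eqref{eq:CFL}. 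Taking $L$ to be the larger of the two constants finishes the proof. The main obstacle is the interpolation bookkeeping — in particular verifying that the adjoint averaging kernel used as a test sequence has forward difference of size $O(1/(P\Dx))$ and that the mollification error is controlled by the uniform $BV$ bound on $\Dp A(u^n)$ — and the need to treat the small-time regime separately, which is exactly where the CFL restriction enters.
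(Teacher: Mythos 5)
Your argument is correct. Note that the paper itself does not prove this lemma at all --- it is imported verbatim from \cite[p.~1853]{Evje:2000vn} --- so what you have supplied is a self-contained proof of a quoted result. Your route is the standard Kru\v{z}kov interpolation between the uniform bound $\sup_n\norm{\Dm\Dp A(u^n)}_1\le C$ (equivalently $\sup_n\abs{\Dp A(u^n)}_{BV}\le C$, available under \eqref{eq:CFL} from \cite{Evje:2000vn,Evje:2000ix}) and the weak time-continuity obtained by summation by parts against a test sequence, optimizing the averaging width $P\Dx\sim\sqrt{(m-n)\Dt}$; this is in substance the same mechanism as in the cited source, with the mollification placed on the dual side. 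All the ingredients you invoke are legitimately available: the $\ell^1$ time-Lipschitz bound $\norm{u^m-u^n}_1\le L_d(m-n)\Dt$, the Lipschitz continuity of $A$ on the (uniformly bounded) range of $u_\Delta$, the identity $\abs{\Dp A(u^n)}_{BV}=\norm{\Dm\Dp A(u^n)}_1$, and the bound $\norm{\Dm\overline{s}}_{\ell^\infty}\le 2/(P\Dx)$ for the adjoint averaging of a sign sequence. One small correction: the dichotomy $\sqrt{(m-n)\Dt}\gtrless\Dx$ at the end is pure arithmetic and has nothing to do with the parabolic part of the CFL condition; \eqref{eq:CFL} enters only through the a priori estimates (the $BV$, $\ell^1$-Lipschitz-in-time, and second-difference bounds) that underpin both regimes. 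With that remark amended, the proof stands.
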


\begin{estimate}\label{est:TimeErrExpl} 
  Suppose \eqref{eq:CFL} is satisfied. Then
  \begin{align*}
    & \left|\Dt \intPi \Dpt \signe(A(u_\Delta)-A(u)) \Dm\Dp
      A(u_\Delta)\test\dX \right| \\ & \qquad\qquad \le
    C\frac{\sqrt{\Dt}}{\Dx} + C \frac{\Dt}{r_0}\left(1 +
      \frac{\Dt}{r_0}\right) + C \Dt.
  \end{align*}
\end{estimate}

\begin{proof}
  Integration by parts for difference quotients gives
  \begin{align*}
    & \Dt\intPi \Dpt \signe(A(u_\Delta)-A(u)) \Dm\Dp
    A(u_\Delta)\test\dX \\ &\quad = -\Dt\intPi
    \signe(A(u_\Delta)-A(u)) S^t_{-\Dt}\Dm\Dp A(u_\Delta)\Dmt\test \dX
    \\ & \quad \qquad -\Dt\intPi \signe(A(u_\Delta)-A(u))\Dmt\Dm\Dp
    A(u_\Delta)\test\dX =:T_1 + T_2,
  \end{align*}
  where we have used that
\begin{equation*}
  \Dmt(\Dm\Dp A(u_\Delta)\test) = S^t_{-\Dt}\Dm\Dp A(u_\Delta)\Dmt\test
  + \Dmt\Dm\Dp A(u_\Delta)\test.
\end{equation*}
Let us consider $T_1$ first. By the Leibniz rule for 
difference quotients,
\begin{align*}
  T_1 &= \Dt\intPi \signe(A(u_\Delta)-A(u)) S^t_{-\Dt}\Dm\Dp
  A(u_\Delta)S^t_{-\Dt}\psi \omega_r \Dmt \rho_{r_0} \dX \\ &\qquad +
  \Dt\intPi \signe(A(u_\Delta)-A(u)) S^t_{-\Dt}\Dm\Dp
  A(u_\Delta)\Dmt\psi\omega_r\rho_{r_0}\dX \\ & =:T_{1,1} +T_{1,2}.
\end{align*}

Using equation \eqref{eq:psiTimeDiff},
$$
\abs{T_{1,2}} \le \Dt\int_{\Pi_T} \abs{S^t_{-\Dt}\Dm\Dp A(u_\Delta)}
\left(\abs{\delta_\Dt^+(t-\nu)} + \abs{\delta_\Dt^+(t-\tau)}\right)
\dxdt \le C\Dt,
$$
as $\norm{\Dm\Dp A(u_\Delta(\cdot,t))}_{L^1(\R)}$ is bounded
independent of $\Delta$ and $t$ (\cite[Lemma~3.4]{Evje:2000ix}). 
Now, as in Lemma~\ref{omegaLemma},
$$
\abs{\Dmt \rho_{r_0}} \le
\frac{\norm{\rho'}_{L^\infty}}{r_0^2}\car{|t-s| \le r_0 + \Dt}(t,s),
$$
and therefore
$$
\abs{T_{1,1}} \le C\Dt\frac{r_0 + \Dt}{r_0^2} \int_{\Pi_T}
\abs{S^t_{-\Dt}\Dm\Dp A(u_\Delta)} \dxdt \le C\frac{\Dt}{r_0}\left(1 +
  \frac{\Dt}{r_0}\right).
$$

Next, we consider $T_2$.  By Lemma~\ref{lemma:HolderContDiff},
\begin{align*}
  \abs{T_2} & \le \Dt\int_\nu^\tau \int_\R \abs{\Dmt\Dm\Dp A(u_\Delta)} \dxdt \\
  & \le 2\frac{\Dt}{\Dx} \int_\nu^\tau \norm{\Dmt\Dp A(u_\Delta(\cdot,t))}_{L^1(\R)}\,dt \\
  & \le 2TL\frac{\sqrt{\Dt}}{\Dx}.
\end{align*}
\end{proof}

\begin{proof}[Proof of Theorem~\ref{thm:discreterateExpl}]
  We start out from Lemma~\ref{lemma:MainLemmaTestFuncExpl} with
  $A(\sigma) = \hat{A}(\sigma) + \eta\sigma$, where $\hat{A}$ is the
  original degenerate diffusion function.  By
  Estimate~\ref{est:TimeErrExpl},
  \begin{align*}
    &\left|\Dt \intPi \Dpt \signe(A(u_\Delta)-A(u)) \Dm\Dp
      A(u_\Delta)\test\dX \right| \\ & \qquad \le
    C\frac{\sqrt{\Dt}}{\Dx} + C \frac{\Dt}{r_0} \left(1 +
      \frac{\Dt}{r_0}\right) + C \Dt =: E_4.
  \end{align*}
  Since all the estimates from Section~\ref{sec:Estimates} apply, we
  obtain
  \begin{align*}
    & \intPi
    \abs{u_\Delta-u}\delta^\Dt(t-\tau)\omega_r(x-y)\rho_{r_0}(t-s)\dX
    \\ & \qquad \le \intPi
    \abs{u_\Dx-u}\delta^\Dt(t-\nu)\omega_r(x-y)\rho_{r_0}(t-s) \dX \\
    & \qquad \qquad \qquad + E_1 + E_2 + E_3 + E_4,
  \end{align*}
  where $E_1,E_2, E_3$ are defined respectively in \eqref{eq:E1Def},
  \eqref{eq:E2def}, and \eqref{eq:E3Def}.  Let us make the assumption
  that $\nu = t_m$ and $\tau = t_n$ for some $m,n \in \N$. Then the
  above inequality takes the form
$$
\kappa(t_n) \le \kappa(t_m) +E_1 + E_2 + E_3 + E_4,
$$
where
$$
\kappa(t) := \int_\R \int_{\Pi_T}
\abs{u_\Delta(x,t)-u(y,s)}\omega_r(x-y)\rho_{r_0}(t-s) \,dydsdx.
$$
Applying Lemmas \ref{lem:kappa} and \ref{lem:L1TimeLipDisc}, and
following the reasoning given in the semi-discrete case, we deduce
\begin{align*}%\label{eq:MainIneqDisc}
  &\norm{u_\Delta(\cdot,t_n)-u(\cdot,t_n)}_{L^1(\R)} \\ & \quad \le
  \norm{u^0_\Delta - u^0}_{L^1(\R)} + \left(L_c+L_d\right)t_m \\ &
  \quad \qquad + 2\left(L_c r_0 + \abs{u^0}_{BV(\R)}r\right)
  +C(1+r+\Dx)^2\left(1 + \frac{\Dx}{r}\right)^3\frac{\Dx}{r^2} \\ &
  \quad\qquad + C\frac{\Dx}{r_0} + C\frac{\Dt}{r_0}\left(1 + r_0 +
    \frac{\Dt}{r_0}\right) + C\frac{\sqrt{\Dt}}{\Dx} \\ & \quad \le
  \norm{u^0_\Delta - u^0}_{L^1(\R)} +C\left(\frac{\Dx}{r^2} +
    \frac{\Dx + \Dt}{r_0} + \frac{\sqrt{\Dt}}{\Dx}+ r + r_0\right),
\end{align*}
where $L_d$ is the constant in Lemma~\ref{lem:L1TimeLipDisc} and $L_c$
is the constant from Lemma~\ref{lem:kappa}.
Let $r = r_0, \Dx = r^3$ and $\Dt = r^8$. It follows that
\begin{equation*}
  \norm{u_\Delta(\cdot,t_n)-u(\cdot,t_n)}_{L^1(\R)} 
  \le \norm{u^0_\Delta - u^0}_{L^1(\R)} + C\Dx^{1/3}.
\end{equation*}
Finally, we send $\eta\to 0$ to conclude the proof of the theorem.
\end{proof}

\section{Concluding remarks}\label{sec:final}

The added complexity of convection-diffusion equations versus 
conservation laws \cite{Kuznetsov:1976ys} arises as a 
result of the need to work with an explicit form of the 
parabolic dissipation term. This is reflected in the fact that the rate of 
convergence is lowered to $1/3$ (from $1/2$ for conservation laws) due to 
Estimate~\ref{estimate:DissTermSchemeErr} and Estimate~\ref{est:Re}. 
The optimality of the $\frac13$ rate is an open problem. 
Concerning Section~\ref{sec: error estimate explicit} (explicit schemes), one 
may wonder if it is possible to remove the strengthened CFL 
condition $\Dt \sim \Dx^{8/3}$ (the usual one demands $\Dt \sim \Dx^2$). 
The difficulty is that the parabolic dissipation term is needed to balance the temporal 
error contribution as well as to carry out the doubling-of-the-variables 
argument, and this forces us to impose a stronger relation between $\Dt$ and $\Dx$ in order 
to appropriately control the temporal error contribution. We do not know if the condition $\Dt \sim \Dx^{8/3}$ 
is genuinely needed or is simply an artifact of our method of proof. 
Finally, we are currently investigating the multidimensional case. For the
semi-discrete scheme the main challenge seems to be the adaptation of
Estimate~\ref{est:Re}, or more precisely to produce a multidimensional analogue of \eqref{eq:errRepr}. 
As an additional difficulty, Lemma~\ref{lemma:HolderContDiff} is not 
available in several space dimensions, see \cite{Evje:2000vn}. 
At the moment our multidimensional 
convergence rates are lower than in the one-dimensional case.

\end{document}